\documentclass[11pt]{article}

\usepackage{nicefrac}
\usepackage{refcount}
\usepackage{amsfonts}
\usepackage{hyperref}
\usepackage{epsfig}
\usepackage{graphicx}
\usepackage{amsmath}
\usepackage{amssymb}
\usepackage{fullpage}
\usepackage{enumitem}
\usepackage{xcolor}
\usepackage{subcaption}

\newtheorem{theorem}{Theorem}[section]
\newtheorem{definition}[theorem]{Definition}
\newtheorem{proposition}[theorem]{Proposition}

\newtheorem{claim}[theorem]{Claim}
\newtheorem{corollary}[theorem]{Corollary}
\newtheorem{remark}[theorem]{Remark}

\newtheorem{observation}[theorem]{Observation}

\renewcommand{\Re}{\mathbb{R}}

\newenvironment{proof}[1][Proof]{ \noindent \textbf{#1: }}{$\Box$
	\bigskip}

%\oddsidemargin=-.5in \evensidemargin=2in \topmargin=-.5in
%\textheight=9in \textwidth=7.5in

%\theoremstyle{claimstyle}
%\newtheorem{subclaim}{Claim}
%\numberwithin{subclaim}{theorem}

%\newcommand{\R}{\mathcal{R}}
\renewcommand{\Re}{\mathbb{R}}

\newcommand{\F}{\mathcal{F}}

\newcommand{\C}{\mathcal{C}}
\newcommand{\D}{\mathcal{D}}

\title{Complements of Finite Unions of Convex Sets}
\author{Chaya Keller\thanks{Department of Computer Science, Ariel University, Israel. \texttt{chayak@ariel.ac.il}. Research partially supported by the Israel Science Foundation (grant no. 1065/20).}
	\mbox{ }
	and Micha A. Perles\thanks{Einstein Institute of Mathematics, Hebrew University, Jerusalem, Israel.
		\texttt{micha.perles@mail.huji.ac.il}}
}

%\author{Chaya Keller}{Ariel University, Israel}{chayak@ariel.ac.il}{https://orcid.org/0000-0001-6400-3946}{Research partially supported by the Israel Science Foundation (grant no.\ 1065/20).}
%\author{Micha A. Perles}{Einstein Institute of Mathematics, Hebrew University, Jerusalem, Israel}{perles@math.huji.ac.il}{}{}

%\authorrunning{C. Keller, M. A. Perles}
%\Copyright{Chaya Keller, Micha A. Perles}
%\ccsdesc[100]{Theory of computation~Computational geometry}
%\keywords{convexity, (p,q)-theorem, k-transversal, infinite (p,q)-theorem}

%\EventEditors{Xavier Goaoc and Michael Kerber}
%\EventNoEds{2}
%\EventLongTitle{38th International Symposium on Computational Geometry (SoCG 2022)}
%\EventShortTitle{SoCG 2022}
%\EventAcronym{SoCG}
%\EventYear{2022}
%\EventDate{June 7--10, 2022}
%\EventLocation{Berlin, Germany}
%\EventLogo{}
%\SeriesVolume{224}
%\ArticleNo{46}

%\linepenalty 200

%\crefname{figure}{Figure}{Figures}

\begin{document}

\maketitle

\begin{abstract}
	Finite unions of convex sets are a central object of study in discrete and computational geometry. In this paper we initiate a systematic study of \emph{complements} of such unions -- i.e., sets of the form  $S=\mathbb{R}^d \setminus (\cup_{i=1}^n K_i)$, where $K_i$ are convex sets. In the first part of the paper we study isolated points in $S$, whose number is related to the Betti numbers of $\cup_{i=1}^n K_i$ and to its non-convexity properties.
	We obtain upper bounds on the number of such points, which are sharp for $n=3$ and significantly improve previous bounds of Lawrence and Morris (2009) for all $n \ll \frac{2^d}{d}$. In the second part of the paper we study coverings of $S$ by well-behaved sets. We show that $S$ can be covered by at most $g(d,n)$ flats of different dimensions, in such a way that each $x \in S$ is covered by a flat whose dimension equals the `local dimension' of $S$ in the neighborhood of $x$. Furthermore, we determine the structure of a minimum cover that satisfies this property. Then, we study quantitative aspects of this minimum cover and obtain sharp upper bounds on its size in various settings. 
\end{abstract}

\section{Introduction}

The complexity of finite unions of convex sets has been a prolific research area in the last decades, due to intrinsic deep discrete-geometric questions pertaining to it and to applications to optimization, robotics, motion planning and other areas (see, e.g., the survey~\cite{Agarwal2008a}). In some of these questions and applications, the structure of the complement plays an important role, and as a result, various structural questions regarding complements of such finite unions were studied in different works over the years.   

One well-studied question is determining the maximum possible number of \emph{bounded connected components} in $S=\mathbb{R}^d \setminus (\cup_{i=1}^n K_i)$ (also known as \emph{voids} in $K=\cup_{i=1}^n K_i$), where $\{K_i\}$ are convex.  
Asked by Fejes T\'{o}th in the plane, and independently by Vitushkin (1958) in $\mathbb{R}^d$, this question was studied both in full generality and for specific families of convex sets, such as translates of the same convex set (see, e.g.,~\cite{AronovCDG17,Kat77}). 
As the number of voids is equal to the $(d-1)$'st Betti number of $\cup_{i=1}^n K_i$, bounding its maximum size is the first step toward understanding the statistical behavior of the Betti numbers of such unions (see~\cite{EdelsbrunnerP24}).

A related question is determining the maximum possible number of \emph{all} connected components (including unbounded ones). Kovalev~\cite{Kov88} proved that the maximum is $\sum_{i=0}^d \binom{n}{i}$, and that it is attained if and only if $K_i$ are either hyperplanes in general position, or layers between parallel hyperplanes. Bei, Chen and Zhang~\cite{BeiCZ15} provided a different proof, and applied the result to analyze the complexity of an algorithm they proposed for solving linear programming problems with only partial knowledge of the constraints.

In the case where $\{K_i\}$ are polyhedra, the combinatorial complexity of $S$ plays an important role in motion planning, and was studied, e.g., by Aronov and Sharir in~\cite{AronovS97SICOMP,AronovS97CGTA}.  

The case where $S$ is finite was studied by Lawrence and Morris~\cite{LawrenceM09}, who obtained upper bounds on $|S|$ (which, in this case, is equal to the number of \emph{one-point holes} in $K$) in terms of $n$. As was shown much earlier by Matou\v{s}ek and Valtr~\cite[Thm.~1.1(ii)]{MatousekV99}, bounds on the number of one-point holes allow bounding the \emph{convexity number} of $K$ in terms of its \emph{invisibility number} (see also~\cite[Theorem 1]{CibulkaKKMRV17}).

\medskip

In this paper we initiate a systematic study of the structural properties of such sets $S$. We concentrate on two types of questions -- isolated points in $S$ (which were studied in the special case where $S$ is finite by Lawrence and Morris~\cite{LawrenceM09}), and covering of $S$ by flats (i.e., affine subspaces of $\Re^d$).

\paragraph{Isolated points in $S$.}

For the sake of convenience, we use the following definition:
\begin{definition}
	A point $p \in \Re^d$ is \emph{encapsulated} by the convex sets $K_1, \ldots, K_n \subset \Re^d$ if for some $\epsilon>0$, $B(p,\epsilon) \setminus (\cup_{i=1}^n K_i) = \{p\}$, where $B(p,\epsilon)$ is the open ball of radius $\epsilon$ centered at $p$. In other words,  $p \in \Re^d$ is \emph{encapsulated} by $K_1, \ldots, K_n \subset \Re^d$ if these convex sets cover a pointed neighborhood of $p$.  
\end{definition}
Lawrence and Morris~\cite{LawrenceM09} studied the case $|S|< \infty$, in which every $p \in S$ is encapsulated by the convex sets $K_1,\ldots,K_n$. In the case where all $\{K_i\}$ are open, they used a classical theorem of Bj\"{o}rner and Kalai~\cite{BjornerK88} to show that $ |S| \leq \binom{n-1}{d}$. On the other hand they showed the existence of such a set $S$ with $(\lfloor \frac{n}{d} \rfloor -1)^{d} \leq |S| $. With no additional assumption on $\{K_i\}$ (except for convexity), but assuming that $S$ affinely spans $\Re^d$, they obtained the upper bound $|S| \leq (n-1) (\binom{n}{\lfloor \frac{n}{2} \rfloor})^{d-1}$.   

We consider the general case, where no additional assumptions on $S$ and $\{K_i\}$ are made.
First, we obtain the following tight bound on the number of points encapsulated by 3 convex sets:
\begin{theorem}\label{thm:3encapsuled}
	Let $K_1,K_2,K_3 \subset \Re^d$ be 3 convex sets. Denote by $S$ the set of points of $\Re^d$ encapsulated by $K_1 \cup K_2 \cup K_3 $. Define $f(d)= \lfloor \frac{3d}{2} \rfloor+1$. Then:
	
	(a) $|S| \leq f(d)$; 
	
	(b) If $|S| = f(d)$ then the sets $K_1,K_2,K_3$ are pairwise disjoint;
	
	(c) The sets $K_1,K_2,K_3$ can be chosen in such a way that $|S|=f(d)$ and $\Re^d \setminus (K_1 \cup K_2 \cup K_3) =S$. 
\end{theorem}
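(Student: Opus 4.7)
The plan is to analyze the local structure around each encapsulated point and then to combine these local constraints with the global convexity of $K_1,K_2,K_3$. For each $p\in S$ define the \emph{type} $T(p)=\{i\in\{1,2,3\}:p\in\overline{K_i}\}$; any $K_i$ with $p\notin\overline{K_i}$ plays no role in a small neighborhood of $p$, and since one convex set cannot encapsulate a point, $|T(p)|\in\{2,3\}$. For each $i\in T(p)$ convex separation furnishes a supporting hyperplane $H_i^p$ through $p$ whose corresponding closed half-space $H_i^{p,+}$ contains $K_i\cap B(p,\epsilon)$ for small $\epsilon$. Since these half-spaces must cover $\Re^d\setminus\{p\}$ in a neighborhood of $p$, two cases arise: if $|T(p)|=2$, the two closed half-spaces must be \emph{opposite}, sharing a common boundary hyperplane $H^p$ through $p$; if $|T(p)|=3$, the origin lies in the convex hull of the three inward normals, which is equivalent to the three hyperplanes $H_i^p$ sharing a common $(d-2)$-dimensional ``axis'' $L^p$ through $p$.

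Partition $S$ into the pair-type sets $S_{12},S_{13},S_{23}$ and the triple-type set $S_{123}$. For any $p\in S_{ij}$, $H^p$ is a supporting hyperplane for both $K_i$ and $K_j$ with them on opposite closed sides, so $K_i\subseteq H^{p,+}$ and $K_j\subseteq H^{p,-}$ \emph{globally}. Consequently any other $q\in S_{ij}$ lies in $\overline{K_i}\cap\overline{K_j}\subseteq H^{p,+}\cap H^{p,-}=H^p$, and hence all of $S_{ij}$ lies on a common hyperplane $H_{ij}$ separating $K_i$ from $K_j$. For triple-type points, the $2$-plane orthogonal to $L^p$ realizes the local $2$-dimensional model of three half-planes with common apex covering the punctured plane; an analogous argument using axes $L^p,L^q$ and the common supporting structure of all three sets constrains how triple-type points can coexist.

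The main quantitative step is to convert these structural constraints into sharp bounds on $|S_{ij}|$ and $|S_{123}|$ whose sum is at most $\lfloor 3d/2\rfloor+1$. I expect the subtle part to be the coupling between the three pair types and the triple type: each $|S_{ij}|$ sits inside the $(d-1)$-dimensional hyperplane $H_{ij}$, but the three $H_{ij}$'s must be compatible with the $(d-2)$-axes $L^p$ of triple-type points, and the requirement that a type-$\{i,j\}$ point avoid $\overline{K_k}$ further restricts its location along $H_{ij}$. For the equality case (b), saturation of every individual bound forces $K_i\cap K_j=\emptyset$ for every pair: if some pair intersected, the separating hyperplane could be perturbed to strictly separate them, causing at least one contact point to be lost and reducing the total. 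For (c), I construct an explicit example by placing $\lfloor 3d/2\rfloor+1$ points on a carefully chosen arrangement of hyperplanes and $(d-2)$-flats, and defining $K_1,K_2,K_3$ as open convex regions whose pairwise separating hyperplanes are the $H_{ij}$ and whose triple-contact axes are the prescribed $(d-2)$-flats, verifying directly that $\Re^d\setminus(K_1\cup K_2\cup K_3)$ equals exactly this set of points.
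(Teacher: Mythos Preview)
Your local analysis --- partitioning $S$ by type $T(p)$ and extracting supporting hyperplanes --- matches the paper's setup (the paper calls $T(p)$ the set $touch(p)$), and your claim that for $|T(p)|=2$ the two supporting half-spaces are opposite is correct. But the proposal stops short of the actual proof in three places.

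First, for the pair types you only show $S_{ij}\subset H^p$, i.e.\ all pair-type points lie on a common separating hyperplane. The paper proves the much stronger fact $|S_{ij}|\le 1$ (its Observation~2.4): if $p\neq p'$ both had type $\{i,j\}$, then on the line through $p,p'$ the two disjoint convex sets $K_i,K_j$ would have to cover punctured neighborhoods of two distinct points, which is impossible for two intervals. Without $|S_{ij}|\le 1$ you cannot get the ``$+3$'' in the recursion $f(d)=f(d-2)+3$.

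Second, and more seriously, your treatment of $S_{123}$ is only local. You attach to each triple-type point $p$ a $(d-2)$-axis $L^p$, but you give no argument that these axes are related to one another, and indeed they need not coincide. The paper's key global step is entirely different: it takes $J=\mathrm{aff}(S_{123})$ and uses a cone argument (if $\dim J=d$ then $\bigcap_i\overline{K_i}$ has nonempty interior, which via the double cone from any $p\in S$ produces an open set missing all $K_i$, contradicting encapsulation) to force $\dim J\le d-1$. Then induction on $d$ applied to $K_i\cap J$ gives $|S_{123}|\le f(\dim J)$. A further case analysis on $\dim J\in\{d-1,d-2,<d-2\}$ (in particular showing that when $\dim J=d-1$ the pair-type points must also lie in $J$) yields the sharp bound and simultaneously the equality case~(b). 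Your sketch contains no analogue of this dimension-reduction induction, and the sentence ``an analogous argument using axes $L^p,L^q$ \ldots constrains how triple-type points can coexist'' is not a proof.

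Third, for (c) you assert a construction but do not give one. The paper's construction is itself inductive: from a configuration in a $(d-2)$-flat $J$ it builds one in $\Re^d$ by adding three new encapsulated points in a $2$-plane orthogonal to $J$, using a Zorn's-lemma extension claim to fill out the convex sets. This is not routine, and ``placing points on a carefully chosen arrangement'' does not substitute for it.
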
 

For larger numbers of convex sets, let $f(d,n)$ 4fbe the maximal number of points that can be encapsulated by $n$ convex sets in $\Re^d$. It is easy to see\footnote{See also Theorem \ref{thm:two_sets-intro} below.} that $f(d,1)=0,f(d,2)=1$, for all $n \geq 1$ we have $f(0,n)=0, f(1,n)=n-1$ and by Theorem \ref{thm:3encapsuled}, $f(d,3)= \lfloor \frac{3d}{2} \rfloor+1$. The following recursive bound can be proved inductuvely.
\begin{theorem}\label{thm:recursive}
	For every $n >3,d \geq 2$,
	$$f(d,n) \leq   \sum_{i=2}^{n-1} \left(\binom{n}{i} f(d,i) \right) +f(d-2,n),$$
	and consequently,
	$$f(d,n) \leq 2d^{n-2}n!.$$	
\end{theorem}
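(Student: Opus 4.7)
The plan is to prove the recursion by induction on $n$, and then to derive the closed-form bound by a nested induction on $(n,d)$ using it.

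For the recursion, fix convex sets $K_1,\ldots,K_n \subset \Re^d$ whose encapsulated set $S$ realizes $|S|=f(d,n)$, and for each $p \in S$ define $I(p)=\{i \in [n] : p \in \overline{K_i}\}$. The argument rests on two simple observations: (i) for $i \notin I(p)$ some neighborhood of $p$ is disjoint from $K_i$, so $p$ is already encapsulated by $\{K_i : i \in I(p)\}$; and (ii) $|I(p)| \geq 2$, since a single convex set whose closure contains $p$ and that covers a punctured neighborhood of $p$ must, by convexity along a diameter through $p$, contain $p$ itself. Partition $S = S_{<n} \cup S_n$ according to whether $|I(p)|<n$ or $|I(p)|=n$. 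For every $I \subseteq [n]$ with $2 \leq |I|=i \leq n-1$, the points of $S_{<n}$ with $I(p)=I$ form an encapsulated set of the $i$-subconfiguration $\{K_j : j \in I\}$, so their number is at most $f(d,i)$ by the inductive hypothesis. Summing yields $|S_{<n}| \leq \sum_{i=2}^{n-1}\binom{n}{i}f(d,i)$.

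The heart of the proof is the inequality $|S_n| \leq f(d-2,n)$. At each $p \in S_n$, every $K_i$ has $p$ on its relative boundary and so admits a supporting hyperplane $H_i(p)$ through $p$; since the $K_i$ cover a pointed neighborhood of $p$, the closed half-spaces bounded by these hyperplanes must cover $\Re^d$, forcing their outward normals to positively span $\Re^d$. The plan is to use this rigid local structure to associate with $(K_1,\ldots,K_n)$ a derived configuration of $n$ convex sets in $\Re^{d-2}$ whose encapsulated set contains an injective image of $S_n$, via a codimension-$2$ reduction adapted to the supporting-hyperplane data at the points of $S_n$. Constructing this reduction -- and verifying both injectivity and that each image point is genuinely encapsulated in the reduced configuration -- is the main obstacle in the proof of the recursion.

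Given the recursion, the closed-form bound $f(d,n) \leq 2d^{n-2}n!$ follows by induction with $n$ as the outer variable and $d$ the inner. The base cases $n \leq 3$ and $d \leq 1$ are verified directly from $f(d,2)=1$, $f(d,3)=\lfloor 3d/2\rfloor+1$, $f(0,n)=0$, $f(1,n)=n-1$, each comfortably below $2d^{n-2}n!$. For the inductive step at $n \geq 4$ and $d \geq 2$, substituting the hypothesis into the recursion, using $\binom{n}{i}i! = n!/(n-i)!$, and re-indexing by $j=n-i$ gives
\[
f(d,n) \leq 2n!\,d^{n-2}\sum_{j=1}^{n-2}\frac{1}{j!\,d^{j}} + 2n!\,(d-2)^{n-2} \leq 2n!\,d^{n-2}\Bigl((e^{1/d}-1) + (1-2/d)^{n-2}\Bigr).
\]
A short calculus check confirms $(e^{1/d}-1)+(1-2/d)^{n-2} \leq 1$ for all $d \geq 2$ and $n \geq 4$, which closes the induction.
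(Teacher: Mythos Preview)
Your derivation of the closed-form bound from the recursion is essentially the paper's argument (with $e^{1/d}-1$ in place of the paper's $2/d$, which also works), so that part is fine.

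The gap is in the recursion itself: you do not prove $|S_n| \leq f(d-2,n)$. You outline a ``codimension-$2$ reduction'' based on supporting-hyperplane data at each $p \in S_n$, and then candidly call this ``the main obstacle'' without carrying it out. As stated it is not clear this can work: the supporting hyperplanes $H_i(p)$ vary with $p$, so there is no evident single configuration in $\Re^{d-2}$ that simultaneously encapsulates injective images of all points of $S_n$. The observation that the outward normals positively span $\Re^d$ is correct but does not by itself single out a canonical $2$-dimensional direction to quotient by.

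The paper's approach is both simpler and different in kind. Rather than building anything in $\Re^{d-2}$, it studies the affine hull $J=\mathrm{aff}(S_n)$ directly. Since every $p \in S_n$ lies in $\mathrm{cl}(K_i)$ for all $i$, one has $\mathrm{conv}(S_n) \subset \bigcap_i \mathrm{cl}(K_i)$; if $\dim J = d$ this would put a $d$-ball inside every $K_i$, and then the cone argument (Observation~\ref{obs:cone}) shows no point outside that ball can be encapsulated --- a contradiction. Hence $\dim J \leq d-1$. If $\dim J \leq d-2$, restrict the whole configuration to $J$ and invoke the definition of $f(d-2,n)$. If $\dim J = d-1$, the same convex-hull trick shows that \emph{every} encapsulated point (not just those in $S_n$) must lie in $J$, whence $|S| \leq f(d-1,n)$, which is at most the right-hand side by the induction hypothesis on $d$. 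Note in particular that the argument is a double induction on $(n,d)$, not just on $n$ as you propose; the inductive hypothesis on $d$ is what handles the $\dim J = d-1$ case.
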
 
Note that for a constant $n$, the bound we obtain is polynomial in $d$, whereas the bound of~\cite{LawrenceM09} is exponential in $d$. Furthermore, a direct calculation shows that our bound is superior as long as $n \ll \frac{2^d}{d}$. 

In addition, we prove a sharp bound in the plane, under the additional assumptions that $S$ is finite and the sets $K_i$ are pairwise disjoint. It turns out that while in $\Re^1$ and for three sets in $\Re^2$, the maximal size of $S$ is obtained where the convex sets are pairwise disjoint, in the general case the disjointness assumption leads to a much smaller bound on $|S|$ -- even in the plane, where the upper bound is linear in $n$ (compared to a quadratic lower bound without this restriction, see Appendix~\ref{subsec:n-encapsuledR^2}).
\begin{proposition}\label{thm:numFlatsR2}
	Let $S=\mathbb{R}^2 \setminus (\cup_{i=1}^n K_i)$ where $n \geq 3$ and $\{K_i\}$ are pairwise disjoint convex sets. Assume that $|S|<\infty$. Then $|S| \leq 5n-11$, and this bound is sharp.
\end{proposition}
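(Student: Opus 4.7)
The plan is a planar-graph argument based on the contact graph of the $K_i$'s. Let $H$ be the graph on $\{K_1,\ldots,K_n\}$ with an edge $\{K_i,K_j\}$ whenever $\overline{K_i}\cap\overline{K_j}\neq\emptyset$. Since the $K_i$ are pairwise disjoint convex sets in $\mathbb{R}^2$, $H$ is a planar simple graph: an embedding is obtained by picking an interior point in each $K_i$ and routing each edge between neighbors through their shared interface, yielding a crossing-free drawing. Hence $|E(H)|\leq 3n-6$, and by Euler's formula $H$ has at most $2n-5$ bounded faces.

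For each $p\in S$, let $m(p)=|\{i:p\in\overline{K_i}\}|$. Since $p$ is encapsulated, $m(p)\geq 2$, and since planar graphs contain no $K_5$, we have $m(p)\leq 4$. Call $p$ \emph{edge-type} if $m(p)=2$ and \emph{face-type} if $m(p)\geq 3$. I would establish two separate bounds.

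\emph{Edge-type, $\leq 3n-6$.} For each edge $\{K_i,K_j\}$ of $H$ the interface $I_{ij}=\overline{K_i}\cap\overline{K_j}$ is a convex subset of a line, and $K_i\cap I_{ij}$, $K_j\cap I_{ij}$ are disjoint convex sub-intervals of $I_{ij}$ whose union covers $I_{ij}$ outside finitely many points (otherwise $|S|=\infty$). Two disjoint sub-intervals of a line leave at most one interior gap; moreover, any $S$-point at a finite endpoint of $I_{ij}$ is forced to be face-type, since the line through $I_{ij}$ continues past the endpoint and the points just beyond must lie in some third set $K_l$ (else $S$ would be infinite), whose closure therefore also contains the endpoint. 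Hence each edge of $H$ contributes at most one edge-type point.

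\emph{Face-type, $\leq 2n-5$.} For any three pairwise touching $K_i,K_j,K_k$, the triple intersection $\overline{K_i}\cap\overline{K_j}\cap\overline{K_k}$ is at most a single point: a segment in this triple intersection would lie on the line separating two of them, and the third 2D convex set containing it in its closure would violate pairwise disjointness. Thus each face-type point is the unique common intersection of the $m(p)\geq 3$ sets touching at it, and can be associated with a triangular bounded face of the planar embedding of $H$; since this face-to-point map takes at most one value per face, the number of face-type points is at most the number of bounded faces of $H$, i.e., $\leq 2n-5$.

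Combining the two bounds yields $|S|\leq(3n-6)+(2n-5)=5n-11$. For sharpness, I would construct an extremal configuration inductively: starting from the extremal $n=3$ example of Theorem~\ref{thm:3encapsuled} (giving $|S|=4$), add convex sets one at a time, arranging each new set to introduce exactly five new encapsulated points while preserving pairwise disjointness. The main obstacle is the detailed geometric analysis underlying the one-per-edge and one-per-face bounds, in particular handling $K_4$-cliques (where four triangular faces correspond to the same common point, wasting three ``slots'') and degenerate configurations with 0-dimensional interfaces or lower-dimensional $K_i$, as well as verifying that the inductive extremal construction indeed produces exactly five new encapsulated points at each step.
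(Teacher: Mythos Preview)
Your overall strategy---classifying points of $S$ as edge-type ($m(p)=2$) or face-type ($m(p)\ge 3$) and bounding each separately---is natural and parallels the paper's argument. However, the contact graph $H$ need not be planar, so both $|E(H)|\le 3n-6$ and $m(p)\le 4$ can fail. Take five half-open sectors $K_i=\{(r,\theta):r>0,\ (i-1)\cdot 72^\circ<\theta\le i\cdot 72^\circ\}$ around the origin ($i=1,\ldots,5$). These are pairwise disjoint $2$-dimensional convex sets with $S=\{0\}$, yet the origin lies in all five closures, so $m(0)=5$ and $H=K_5$. In your proposed embedding the edges between non-adjacent sectors must all be routed through the single common point $0$, and they genuinely cross there; abstractly $K_5$ is simply not planar. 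The obstacle you flag (``handling $K_4$-cliques'') is therefore not a detail to be patched but a symptom that $H$ is the wrong graph.

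The paper works instead with the planar \emph{map} $G$ whose faces are the closures $\overline{K_i}$, whose edges are only the one-dimensional interfaces $\overline{K_i}\cap\overline{K_j}$ (zero-dimensional touchings are \emph{not} edges), and whose vertices are the points lying in at least three closures. This is a genuine cell decomposition of $\Re^2$, so Euler's formula $v-e+n=1$ applies. Every vertex has degree $\ge 3$, and a short argument shows the unbounded edges contribute at least $3$ ``ends''; together these give $3v\le 2e-3$, and combined with Euler one obtains $e\le 3n-6$ and $v\le 2n-5$, hence $v+e\le 5n-11$. Finally $|S|\le v+e$: each $p\in S$ is either a vertex of $G$ (your face-type points) or an interior point of a one-dimensional interface, with at most one $S$-point per edge (your edge-type bound). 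So your edge-type/face-type split survives, but the face-type bound $v\le 2n-5$ comes directly from Euler on $G$ rather than from any face-to-point bijection in $H$. For sharpness the paper also argues inductively, adding one convex set at a time to gain exactly five new points of $S$.
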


\paragraph{Covering $S$ by flats.} For the sake of convenience, we use the following definition.

\begin{definition}\label{def:dm}
	For $S \subset \Re^d$, the \emph{flat-dimension} of $S$, $dm(S)$, is $$dm(S)=\mbox{max}\{k: S \mbox{ includes a } k \mbox{-simplex}\},$$
	where a $k$-simplex (for $k \geq -1$) is the convex hull of $k+1$ affinely independent points in $\Re^d$. 
	
	\medskip \noindent The \emph{local dimension} $dm(S,p)$ of $S$ at $p \in S$  is $$dm(S,p)=\min_{\epsilon>0}dm(S \cap B(p,\epsilon)).$$
\end{definition}
Our main result in this part is the following theorem:
\begin{theorem}\label{thm:local}
	For any $n,d \in \mathbb{N}$, there exists a number $g=g(n,d)$ such that the following holds. Let $S=\mathbb{R}^d \setminus (\cup_{i=1}^n K_i)$, where $\{K_i\}$ are convex sets. Then $S$ can be covered by at most $g$ flats, in such a way that each $p \in S$ is covered by a flat of dimension $dm(S,p)$. Furthermore, there exists a unique such cover $\mathcal{C}$ that is minimal with respect to inclusion.  
\end{theorem}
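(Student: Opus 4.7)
The plan is to construct the minimal cover $\mathcal{C}$ explicitly and verify its four required properties. Define
\[
\mathcal{C} = \{F \text{ a flat in } \Re^d : \{p \in F \cap S : dm(S,p) = \dim F\} \text{ has nonempty relative interior in } F\},
\]
so that $\mathcal{C}$ consists of exactly those flats which genuinely carry a positive-dimensional portion of the stratum $S_k := \{p \in S : dm(S,p) = k\}$ of matching dimension $k = \dim F$. With $\mathcal{C}$ in hand, I will establish: (a) $\mathcal{C}$ covers $S$ with the required dimension property; (b) $|\mathcal{C}| \le g(n,d)$ for some function $g$; (c) $\mathcal{C}$ is minimal with respect to inclusion; and (d) $\mathcal{C}$ is the unique such minimal cover.

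The core is the covering lemma (a): for every $p \in S_k$, there is a $k$-flat $F \in \mathcal{C}$ containing $p$. My strategy is to pick, for each small $\epsilon>0$, a $k$-simplex $\Delta_\epsilon \subseteq S \cap B(p,\epsilon)$ (which exists by the definition of $dm(S,p)=k$) and, by compactness of the Grassmannian of $k$-dimensional linear subspaces, take a subsequential limit $F$ of $\mathrm{aff}(\Delta_\epsilon)$ as $\epsilon \to 0$. Since each $F_\epsilon := \mathrm{aff}(\Delta_\epsilon)$ passes within distance $\epsilon$ of $p$, the limit $F$ contains $p$. Showing $F \in \mathcal{C}$ --- that is, that $F \cap S_k$ has nonempty relative interior in $F$, though not necessarily at $p$ itself --- is where the convexity of the $K_i$ enters: for each $i \in I_p := \{i : p \in \overline{K_i} \setminus K_i\}$, the tangent cone of $K_i$ at $p$ is a convex cone, and the fact that every $\Delta_\epsilon$ avoids $K_i$ restricts this cone enough to leave a relatively open portion of $F$ disjoint from every $K_i$. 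I expect this tangent-cone step, especially when several $K_i$ simultaneously approach $p$ with intricate supporting-hyperplane structure, to be the main obstacle.

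For finiteness (b), I would argue that each $F \in \mathcal{C}$ is pinned down by the supporting hyperplanes of (a subset of) the $K_i$ at the points of its relatively open witness set $U_F \subseteq F \cap S_{\dim F}$: at such points, the relevant $K_i$ admit supporting hyperplanes whose common intersection contains an open piece of $F$. Only finitely many such configurations arise from $n$ convex sets, so $|\mathcal{C}|$ is finite. An explicit bound $g(n,d)$ can be extracted by induction on $d$: the intersection $F \cap S$ for $F \in \mathcal{C}$ of dimension $k<d$ is itself the complement of at most $n$ convex sets in the $k$-dimensional ambient $F$, to which the inductive hypothesis applies, yielding a recursive bound together with the single top-dimensional flat $\Re^d$ (present in $\mathcal{C}$ iff $\mathrm{int}(S) \neq \emptyset$).

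Finally, (c) and (d) both rest on the elementary fact that two distinct $k$-flats meet in a flat of dimension at most $k-1$. For each $F \in \mathcal{C}$ of dimension $k$ with witness $U_F$, any finite collection of $k$-flats distinct from $F$ covers at most a $(k-1)$-dimensional subset of $U_F$, so some point of $U_F$ can only be covered by $F$ itself in a finite valid cover. Hence $F$ belongs to every finite cover satisfying the dimension condition. It follows that $\mathcal{C}$ is contained in every such cover; combined with the fact that $\mathcal{C}$ is itself a valid cover, this simultaneously yields the minimality of $\mathcal{C}$ and its uniqueness as the minimal valid cover.
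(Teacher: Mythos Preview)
Your definition of $\mathcal{C}$ turns out to coincide with the paper's collection of \emph{ordinary flats} (flats $F$ for which some open $U$ satisfies $S\cap U = F\cap U$), and your arguments for (c) and (d) are essentially the paper's own: any finite valid cover must contain every flat carrying a relatively open witness, so the collection of such flats is the unique minimal one once it is known to be finite and to cover. The genuine divergence, and the genuine gaps, lie in (a) and (b).

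For (a), the Grassmannian-limit idea is an interesting alternative, but the step you yourself flag as ``the main obstacle'' is not merely difficult---as written it is not an argument. Knowing that each $\Delta_\epsilon$ avoids each $K_i$ tells you only that $\Delta_\epsilon$ misses the tangent cone $C_i$ \emph{approximately}; passing from ``each $F_\epsilon$ carries a simplex avoiding the $K_i$'' to ``the limit flat $F$ contains a relatively open subset of $S_k$'' does not follow from anything you have stated. Nothing you wrote rules out the possibility that $F$ lies inside $\bigcap_{i\in I_p}\partial K_i$ in such a way that $F\cap S$ is nowhere relatively open in $F$. The paper avoids this limit argument entirely: it proves instead, by a double induction on $(n,d)$, that every $p\in S_k$ is a limit of $k$-\emph{ordinary} points (points where $S$ already locally equals a $k$-flat). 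The induction step splits on whether $p$ fails to touch some $K_i$ (then $S=S^i$ near $p$ and one reduces $n$) or touches all $K_i$ (then one restricts to the flat $H=\mathrm{aff}\{q\in S:q\text{ touches every }K_i\}$, which has $\dim H<d$ by Observation~\ref{obs:cone}, and reduces $d$). This dichotomy is the structural engine your tangent-cone sketch lacks.

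For (b), your inductive step is circular. Applying the inductive hypothesis to $F\cap S$ inside the $k$-dimensional ambient space $F$ bounds the number of ordinary flats \emph{contained in that particular $F$}; it says nothing about how many distinct $k$-flats $F$ lie in $\mathcal{C}$ to begin with. The preceding sentence, ``only finitely many such configurations arise from $n$ convex sets,'' is likewise unjustified: a single convex body already has a continuum of supporting hyperplanes, so the set of candidate intersection flats is a priori uncountable, and some real combinatorial reduction is needed. In the paper, finiteness comes from the same $(n,d)$-induction used for (a): every ordinary flat of $S$ is shown to be an ordinary flat either of some $S^i=\Re^d\setminus\bigcup_{j\ne i}K_j$ (fewer convex sets) or of the restriction $S^H$ (lower ambient dimension), and the recursive bookkeeping yields the bound $g(n,d)$. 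Your proposal offers no replacement for this mechanism.
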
 
Actually, the proof of Theorem~\ref{thm:local} provides significant structural information on $\mathcal{C}$: 
Suppose $p \in S, dm(S,p)=k$, and let $L$ be a flat. If $S \cap W = L \cap W $ for some neighborhood $W$ of $p$, then $L$ is the $k$-flat in $\mathcal{C}$ that covers $p$.

On the quantitative side, the bound on $g(d,n)$ which follows from the proof is rather large, and in particular, the maximum numbers of flats of each dimension in $\mathcal{C}$ seem difficult to compute in general. Hence, we focus on special cases where effective bounds can be obtained. The following natural definition 
%of \emph{admissible vectors of flat dimensions} 
will be convenient. 
\begin{definition}
	%For $d,n \in \mathbb{N}$, let $V(d,n) \subset \Re^{d+1}$ be the set of vectors $(v_0,\ldots,v_{d})$ such that there exist convex sets $K_1,\ldots,K_n \subset \Re^d$ for which in the cover $\mathcal{C}$ of $S=\Re^d \setminus (\cup_{i=1}^n K_i)$, for each $0 \leq k \leq d$, the number of $k$-flats is $v_k$.
	For $S=\mathbb{R}^d \setminus (\cup_{i=1}^n K_i)$, where $\{K_i\}$ are convex sets, and for $k=0,1,\ldots,d$, denote by $\nu_k(S)$ the number of $k$-flats in the cover $\C$ of $S$. Denote by $\nu_k(d,n)$ the maximum of $\nu_k(S)$ over all such sets $S$, where $d,n,k$ are fixed.
\end{definition}
Theorems \ref{thm:3encapsuled} and \ref{thm:recursive} above provide upper bounds on $\nu_0(d,n)$ (i.e., on the maximal number of $0$-dimensional flats in $\mathcal{C}$), as by definition, the flat which covers each isolated point in $S$ must be the point itself. At the other end of the spectrum, it is clear that $\nu_d(d,n)=1$. Regarding $(d-1)$-flats, we determine $\nu_{d-1}(d,n)$ completely. 
\begin{theorem}\label{thm:num_flats-intro}
	We have 
	$$\nu_1(2,n) = t(n,4)=(\tfrac{3}{4}+o(1))\binom{n}{2}, \quad \mbox{and} \quad \nu_{d-1}(d,n) = \binom{n}{2}, \quad \forall d \geq 3,$$ where  $t(n,4)$ is the Tur{\'{a}}n number.
\end{theorem}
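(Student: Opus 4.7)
The plan is to associate each $(d-1)$-flat $L \in \mathcal{C}$ canonically with an unordered pair $\{K_{i(L)}, K_{j(L)}\}$ of convex sets, and then bound the number of flats by analyzing the resulting \emph{pair graph} on $[n]$.

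I begin with a structural lemma valid in every dimension $d \geq 2$. Given $L \in \mathcal{C}$ of dimension $d-1$, the description of $\mathcal{C}$ following Theorem~\ref{thm:local} provides a point $p$ in the $(d-1)$-dimensional stratum of $L \cap S$ and a ball $W \ni p$ with $S \cap W = L \cap W$. Every $K_k$ meeting $W$ is convex and disjoint from $L \cap W \subseteq S$, hence lies in one of the two open half-balls $W^+, W^-$ of $W \setminus L$. Since $W^+$ is $d$-dimensional and covered by finitely many convex pieces $K_k \cap W^+$, the trace $\overline{K_k} \cap L$ of at least one such $K_k$ has nonempty $(d-1)$-dimensional interior in $L$ near some $p' \in L$; relocating $p$ into this interior and shrinking $W$ yields $W^+ \subseteq K_{i(L)}$ for a single convex set, and symmetrically $W^- \subseteq K_{j(L)}$.

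The central step is to prove that $L \mapsto \{i(L), j(L)\}$ is injective, which gives $\nu_{d-1}(d,n) \leq \binom{n}{2}$ in every dimension and is the tight bound for $d \geq 3$. Suppose $L_1 \neq L_2$ both correspond to $\{K_i, K_j\}$. Under either of the two possible relative sign assignments of the pair at the two witnesses, one of $K_i, K_j$ (call it $K$) is forced to contain open half-balls on both sides of $L_1$: it contains $W_1^+$ (or $W_1^-$) near $p_1$ together with a suitable half-ball near $p_2$ that lies on the opposite side of $L_1$. Convexity of $K$ then places the entire segment between any two such points in $K$; by choosing the point nearer to $L_1$ sufficiently close to $L_1$ compared with the other, the crossing point $r$ of this segment with $L_1$ is forced inside the witness ball $W_1$ around $p_1$. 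Hence $r \in L_1 \cap W_1 \subseteq S$ while $r \in K$, contradicting $K \cap S = \emptyset$. The parallel case $L_1 \parallel L_2$ and the degenerate case $p_2 \in L_1$ are handled by symmetric strip and limit arguments. For the matching lower bound in $d \geq 3$, I would use an open Voronoi-type construction on $n$ generically placed points in $\mathbb{R}^d$, suitably adjusted (for example by a metric perturbation) so that every pair of cells shares a $(d-1)$-dimensional facet, producing $\binom{n}{2}$ distinct hyperplanes in $\mathcal{C}$.

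In the planar case, injectivity alone only yields $\nu_1(2,n) \leq \binom{n}{2}$, whereas the theorem asserts the tighter bound $t(n,4) < \binom{n}{2}$ for $n \geq 5$. Define the pair graph $G$ on $[n]$ by $\{i,j\} \in E(G)$ iff the pair $\{K_i, K_j\}$ contributes a line to $\mathcal{C}$; injectivity gives $\nu_1(2,n) = |E(G)|$. The new ingredient is to show that $G$ is $K_5$-free: among any five planar convex sets, not all ten pairs can contribute lines. This is to be proved by a planar-arrangement analysis of the ten hypothetical witness lines together with their two-sided $5$-colored side-labels, producing a combinatorial obstruction, for instance via incompatible cyclic orderings of the lines around common intersection points or at infinity. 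Tur{\'a}n's theorem then yields $\nu_1(2,n) \leq t(n,4)$, matched by a Tur{\'a}n-template construction in which $[n]$ is partitioned into four classes and the convex sets arranged so that precisely the cross-class pairs contribute lines.

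The main obstacle is the planar $K_5$-freeness of $G$. The injectivity argument has a uniform convexity-plus-segment core that functions in every dimension, but closing the gap from $\binom{n}{2}$ to $t(n,4)$ in the plane requires a genuinely new plane-geometric exclusion of five pairwise paired convex sets, which is the technically subtle heart of the theorem.
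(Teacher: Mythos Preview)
Your overall architecture matches the paper's: associate each ordinary hyperplane with an unordered pair of convex sets, prove injectivity of this assignment, and in the plane strengthen this to $K_5$-freeness of the resulting pair graph so that Tur\'{a}n's theorem applies. Your injectivity argument via a segment crossing $L_1$ inside $W_1$ is correct; the paper obtains the same conclusion more directly by observing that once $\mathrm{cl}(K_i)$ and $\mathrm{cl}(K_j)$ each meet $L \cap W$ in a set of dimension $d-1$, the hyperplane $L$ is the \emph{unique} hyperplane separating $K_i$ from $K_j$ (any separating hyperplane must contain this $(d-1)$-dimensional overlap), so the pair determines $L$ with no further case analysis.

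There are, however, two genuine gaps.

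\medskip\noindent\textbf{The $K_5$-freeness.} You flag this as the crux and leave it open, proposing a line-arrangement or cyclic-order obstruction. The paper's argument is short and topological rather than combinatorial: if five sets $K_{i_1},\ldots,K_{i_5}$ pairwise contributed ordinary lines, then for each pair one has $\mathrm{int}(K_{i_a}) \cap \mathrm{int}(K_{i_b}) = \emptyset$ while $\mathrm{cl}(K_{i_a}) \cap \mathrm{cl}(K_{i_b})$ contains the corresponding ordinary point $p_{ab}$. Picking $x_a \in \mathrm{int}(K_{i_a})$ and joining $x_a$ to $x_b$ by the two-segment path $[x_a,p_{ab},x_b]$, the half-open piece $[x_a,p_{ab})$ lies in $\mathrm{int}(K_{i_a})$; the disjointness of the five interiors forces the ten paths to be non-crossing, producing a planar drawing of $K_5$, which is impossible. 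Your proposed route via incompatible cyclic orderings may be salvageable, but it is not the argument, and as stated it is not a proof.

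\medskip\noindent\textbf{The lower-bound constructions.} For $d \geq 3$, an ordinary Voronoi diagram on \emph{generically placed} points does not have every pair of cells adjacent (already in $\Re^2$ there are only $O(n)$ adjacencies), and an unspecified ``metric perturbation'' does not fix this. The paper instead starts from a $2$-neighborly $(d{+}1)$-polytope on $n$ vertices (these exist precisely for $d+1 \geq 4$), passes to the dual so that every two facets share a ridge, and centrally projects to a polyhedral tiling of $\Re^d$ by $n$ cells; taking the open cells as the $K_i$ makes the $\binom{n}{2}$ ridges into the ordinary hyperplanes. For $d=2$, realizing the complete $4$-partite Tur\'{a}n graph is far from a one-line template: the paper builds it in five explicit steps, starting from four base regions and replacing each by $n_i$ slightly perturbed copies using nested shallow circular arcs so that every cross-class pair shares a short flat boundary segment on its own line. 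Your proposal gives no indication of how to carry this out.
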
  
Finally, in the case $n=2$, i.e., where there are only two convex sets, we completely determine the numbers of $k$-flats that can appear in the cover $\mathcal{C}$ of $S=\Re^d \setminus (K_1 \cup K_2)$, for each $0 \leq k \leq d$.
%the set $V(d,2)$ of admissible vectors.
\begin{theorem}\label{thm:two_sets-intro}
	%For $d,n \in \mathbb{N}$, let $V(d,n) \subset \Re^{d+1}$ be the set of vectors $(v_0,\ldots,v_{d})$ such that there exist convex sets $K_1,\ldots,K_n \subset \Re^d$ for which in the cover $\mathcal{C}$ of $S=\Re^d \setminus (\cup_{i=1}^n K_i)$, for each $0 \leq k \leq d$, the number of $k$-flats is $v_k$.	
%	Then for any $d$ and $n=2$, we have 	$$V(d,2)=\{0,1\}^{d+1}.$$
Let $K_1,K_2$ be convex sets in $\Re^d$ ($d \geq 1$), and let $\C$ be the cover of $S=\Re^d \setminus(K_1 \cup K_2)$ discussed above. Then any two flats $I,J \in \C$ satisfy $J \subset I $ or $I \subset J$. Consequently, $\C$ contains at most one $k$-flat for each $0 \leq k \leq d$.

Moreover, for any subset $T$ of $\{0,1,\ldots,d\}$ we can find $K_1,K_2 \subset \Re^d$ such that $\C$ contains a $k$-flat if and only if $k \in T$. 
\end{theorem}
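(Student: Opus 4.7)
For each $p \in S$, let $L_p \in \C$ be the flat covering $p$, so that $\dim L_p = dm(S, p)$. By the structural information in the proof of Theorem \ref{thm:local}, $L_p = \mathrm{aff}(S \cap B(p, \epsilon))$ for all sufficiently small $\epsilon > 0$, since $S \cap B(p, \epsilon)$ spans an affine subspace of dimension exactly $dm(S, p)$ and this is the only $dm$-flat compatible with the covering. To analyze $L_p$ I introduce the direction cones $C_r(p) = \{v \in \Re^d : p + tv \in K_r \text{ for some arbitrarily small } t > 0\}$, $r = 1, 2$; each $C_r(p)$ is a convex cone (by convexity of $K_r$), and the local structure of $K_1 \cup K_2$ near $p$ is captured by $C_1(p) \cup C_2(p)$.

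For the \emph{nesting}, take $I = L_p$ and $J = L_q$ in $\C$ with $\dim I \leq \dim J$; I claim $I \subseteq J$. The advantage of $n = 2$ is that each $K_r$ is a \emph{single} convex set, so $C_r(p)$ and $C_r(q)$ come from the same convex body and are linked via its global supporting hyperplanes. A simple case is $q \in B(p, \epsilon)$: then $q \in S \cap B(p, \epsilon) \subseteq L_p$ and $S \cap B(q, \delta) \subseteq S \cap B(p, \epsilon) \subseteq L_p$ for small $\delta$, giving $L_q \subseteq L_p$. For general $p, q$, my strategy uses a hierarchical separation structure: I claim there is a nested chain of flats $\ell_0 \subseteq \ell_1 \subseteq \cdots \subseteq \ell_{d-1} \subsetneq \Re^d$ such that, locally near any $p \in S$, $K_1$ and $K_2$ lie in complementary closed ``sides'' within each $\ell_k$; the flats in $\C$ are exactly those $\ell_k$ (plus possibly $\Re^d$) that actually appear as covering flats. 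Convexity of each $K_r$ is what forces these chains to agree across different basepoints, yielding the nesting. \textbf{The main obstacle} is making this chain precise at boundary points of $\bar{K}_1 \cap \bar{K}_2$, where the cones $C_r$ are rich and the separation structure has sub-cases; a careful case analysis on the supporting hyperplanes of each $K_r$ at $p$ and $q$, combined with a limiting reduction to strict separation, should suffice.

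For the \emph{realizability}, given $T = \{k_1 < \cdots < k_r\} \subseteq \{0, 1, \ldots, d\}$, I construct $K_1, K_2 \subset \Re^d$ whose cover $\C$ has dimension set exactly $T$. The template is $d = 2, T = \{0, 1, 2\}$:
\[
K_1 = \{(x, y): y > 0, \, -5 < x < 5\} \cup \{(x, 0): 0 < x < 1\}, \quad K_2 = \{(x, y): y < 0, \, -5 < x < 5\} \cup \{(x, 0): -1 < x < 0\}.
\]
Both are convex (routine check), and the complement $S$ decomposes into a $2$-dimensional piece (the outer half-planes $x \leq -5$ and $x \geq 5$), two closed-open $1$-dimensional segments of the $x$-axis, and the isolated point $(0, 0)$, giving $\C = \{\{(0, 0)\}, \, x\text{-axis}, \, \Re^2\}$. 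For general $T$ in $\Re^d$, pick a chain of nested coordinate flats $F_{k_1} \subsetneq \cdots \subsetneq F_{k_r}$ and iterate this ``layering'' along successive coordinates, inserting one layer per $k_i \in T$; verification is routine.
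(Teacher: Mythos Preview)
Your nesting argument has a genuine gap. You introduce the direction cones $C_r(p)$ and then posit a ``hierarchical separation structure'': a global nested chain $\ell_0\subseteq\ell_1\subseteq\cdots\subseteq\ell_{d-1}$ of flats that is supposed to host every element of $\C$. But the existence of such a global chain is precisely the conclusion to be proved; nothing in your outline constructs it, and you yourself flag the ``main obstacle'' at points of $\overline{K_1}\cap\overline{K_2}$ and leave it to an unspecified case analysis. The local observation that $L_q\subseteq L_p$ when $q\in B(p,\epsilon)$ is correct but does not propagate to arbitrary $p,q$, and the cones $C_r(p)$, $C_r(q)$ are not linked in any way you make explicit.

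The paper's proof of the nesting (Claim~\ref{cl:2convexsets}) bypasses all of this with a single-line argument. If $J_1,J_2\in\C$ are incomparable, choose ordinary points $p_i\in J_i\setminus J_{3-i}$ with neighborhoods $U_i$ satisfying $U_i\cap S=U_i\cap J_i$. The line $\ell=\mathrm{aff}(p_1,p_2)$ meets each $J_i$ only at $p_i$, so on $\ell$, just to either side of $p_i$, one finds points $q_i,q_i'\in K_1\cup K_2$. This yields six collinear points $q_2',p_2,q_2,q_1,p_1,q_1'$ in that order with $p_1,p_2\in S$ and the four $q$-points in $K_1\cup K_2$; a two-set pigeonhole on the convex sets $K_1,K_2$ now forces one of $p_1,p_2$ into some $K_r$, a contradiction. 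No cones, no separating chains, no boundary case analysis.

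Your realizability sketch is correct and essentially the same as the paper's Claim~\ref{cl:K1K2}: your $d=2$ example works, and the ``layering'' you describe is exactly the paper's induction step (extend $K_1',K_2'\subset\Re^{d-1}$ by attaching the open lower half-space to $K_1'$ and either the slab $\{0<x_d\leq 1\}$ or the full upper half-space to $K_2'$, according as $d\in T$ or not). You should, however, spell the induction out rather than call it routine; in particular, you need to verify that the extension does not create or destroy any ordinary $k$-flats for $k<d$.
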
  

The rest of the paper is organized as follows. 
%A few definitions and notations are introduced in Section~\ref{sec:preliminaries}. 
In Section~\ref{sec:points} we study isolated points in $S$, proving Theorems~\ref{thm:3encapsuled} and~\ref{thm:recursive}. In Section~\ref{sec:covering} we study the qualitative question of covering $S$ by flats, proving Theorem~\ref{thm:local}. In Sections~\ref{sec:hyperplanes} and~\ref{sec:two} we study quantitative aspects of covering $S$ by flats and present the proofs of Theorems~\ref{thm:num_flats-intro} and~\ref{thm:two_sets-intro}. Finally, in Appendix~\ref{subsec:n-encapsuledR^2} we prove Proposition~\ref{thm:numFlatsR2}.

\section{Bounding the number of points encapsulated by convex sets}\label{sec:points}

Recall that $f(d,n)$ is the maximal number of points that can be encapsulated by $n$ convex sets in $\Re^d$. It is easy to see that for every $n>0$, $f(1,n)=n-1$ and that for every $d>0$, $f(d,2)=1$. After introducing a few definitions and an observation in Section~\ref{sec:preliminaries}, we determine $f(d,3)$ exactly in Section \ref{subsec:3encapsuled}. Then, in Section \ref{subsec:n-encapsulated} we obtain by an inductive argument an upper bound on $f(d,n)$ for $n>3$. 

\subsection{Preliminaries}\label{sec:preliminaries}

For a set $S \subset \Re^d$, denote by $\mbox{cl}(S)$ and $\mbox{int}(S)$ the topological closure and interior of $S$, respectively. The convex hull of $S$ is denoted by $\mbox{conv}(S)$. For $d \in \mathbb{N}$ and $0 \leq k \leq d$, a $k$-flat $\pi \subset \Re^d$ is a $k$-dimensional affine subspace of $\Re^d$.

\begin{definition}\label{def:touch}
	For $K \subset \Re^d, p \in \Re^d$, we say that $p$ touches $K$ (or $K$ touches $p$), if $p \in \mbox{cl}(K) \setminus K$.
\end{definition}

The following observation will be used several times in the sequel.
\begin{observation}\label{obs:cone}
	Let $K_1,\ldots,K_n \subset \Re^d$ be convex sets, and let $B$ be a $d$-dimensional ball $B \subset K_1 \cap\ldots \cap K_n$. Let $p_0 \notin K_1 \cup\ldots \cup K_n$. Consider the double cone with apex $p_0$ spanned by $B$. Then the other side of this cone (the dark area in Figure \ref{fig:fig1}) is disjoint to $K_1,\ldots,K_n$. 
\end{observation}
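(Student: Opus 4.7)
The plan is to reduce the observation to a single application of convexity, applied once per set $K_i$. The key point is to give a clean parametric description of the ``other side'' of the double cone: a point $q \neq p_0$ lies on that side if and only if there exist $b \in B$ and $t > 0$ with $q = (1+t)p_0 - tb$, or equivalently $p_0 = \tfrac{1}{1+t} q + \tfrac{t}{1+t} b$. In other words, $p_0$ is a proper convex combination of $q$ and some point $b$ of $B$.

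First I would write down this parametrization and verify that it really does describe the opposite cone, namely the set of points $q$ such that the ray from $q$ through $p_0$ eventually meets $B$ (equivalently, such that $p_0$ lies on the segment $[q,b]$ for some $b \in B$). This is just a matter of rearranging the definition of a ray from $p_0$ in the direction opposite to $B$.

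Next, I would fix an index $i \in \{1,\ldots,n\}$ and assume for contradiction that some point $q$ on the opposite cone lies in $K_i$. If $q = p_0$, the hypothesis $p_0 \notin K_1 \cup \cdots \cup K_n$ gives an immediate contradiction. Otherwise, by the parametrization above, $p_0$ is a convex combination of $q$ and a witness $b \in B \subset K_i$. Since $K_i$ is convex and both $q$ and $b$ belong to $K_i$, we conclude $p_0 \in K_i$, again contradicting the hypothesis. Running this argument for every $i$ yields the claim.

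Honestly, I do not expect any serious obstacle here: once the parametrization of the opposite cone is in place, the argument is a one-line application of convexity. It is perhaps worth remarking that the statement does not really use the full strength of ``$d$-dimensional ball'' -- any single point $b \in \bigcap_j K_j$ suffices for the conclusion -- so the hypothesis that $B$ be $d$-dimensional is presumably exploited only in later applications of the observation, where one will want the opposite cone to have nonempty interior.
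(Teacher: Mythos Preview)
Your proposal is correct and matches the paper's own one-line argument: for each point $x$ on the opposite side of the cone, $p_0$ lies on the segment from $x$ to some $b\in B\subset K_i$, so $x\in K_i$ would force $p_0\in K_i$ by convexity. Your explicit parametrization and your side remark that only a single common point (rather than a full-dimensional ball) is needed are both fine additions, but the core idea is identical.
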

Indeed, for each $x$ in the other side of the cone above, $p_0$ lies inside a segment that connects $x$ to a point in each $K_i$.

\begin{figure}[ht]
	\begin{center}
		\scalebox{0.5}{
			\includegraphics[width=0.8\textwidth]{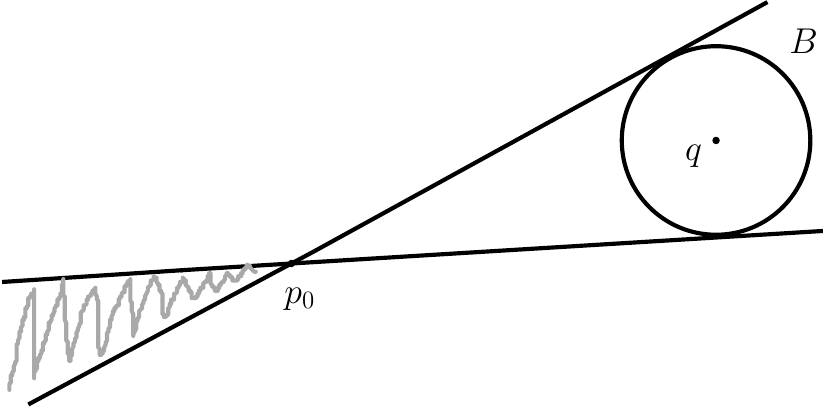}
		}
		\caption{A cone with a vertex in $p_0$ that is tangent to a ball $B$.}
		\label{fig:fig1}
	\end{center}
\end{figure}

\subsection{The number of points encapsulated by 3 convex sets in $\Re^d$}\label{subsec:3encapsuled}
The following theorem implies that $f(d,3)=\lfloor \frac{3d}{2} \rfloor+1$. For the sake of simplicity, we denote by $S$ the set of encapsulated points, instead of the entire set $\Re^d \setminus (\cup_{i=1}^n K_i)$. The reason for abusing notation is that in the proof of this theorem, elements of $\Re^d \setminus (\cup_{i=1}^n K_i)$ other than encapsulated points do not make any difference. Hence, we implicitly assume that there are no such elements, and thus, $\Re^d \setminus (\cup_{i=1}^n K_i)$ is equal to the set of encapsulated points.  

\medskip

\noindent \textbf{Theorem \ref{thm:3encapsuled} - restatement.}
	Let $K_1,K_2,K_3 \subset \Re^d$ be convex sets. Denote by $S$ the set of points of $\Re^d$ encapsulated by $K_1 \cup K_2 \cup K_3 $. Define $f(d)= \lfloor \frac{3d}{2} \rfloor+1$. Then:
	
	(a) $|S| \leq f(d)$; 
	
	(b) If $|S| = f(d)$ then the sets $K_1,K_2,K_3$ are pairwise disjoint;
	
	(c) There exist $K_1,K_2,K_3$ for which $|S|=f(d)$ and $\Re^d \setminus (K_1 \cup K_2 \cup K_3) =S$. 

\medskip
 
\begin{proof}[Proof of Theorem~\ref{thm:3encapsuled}]
	We prove the first two statements together by induction on $d$. The cases $d=0,1$ are trivial. For $d \geq 2 $ and for every $p \in S$, let $$touch(p) = \{i: 1 \leq i \leq 3, \mbox{ and } p \mbox{ }touches \mbox{ } K_i\}.$$ 
	We shall use the following observations.
	\begin{observation}\label{obs:enclosed2}
		If $p$ is encapsulated by $K_1,K_2,K_3$ then $|touch(p)| \geq2$, and if $touch(p) = \{i,j\}$ then $K_i \cap K_j = \emptyset$.
	\end{observation}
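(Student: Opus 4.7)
The plan is to prove both halves of the observation by direct convexity arguments, exploiting the encapsulation hypothesis $B(p,\epsilon)\setminus(K_1\cup K_2\cup K_3)=\{p\}$, which in particular says $p\notin K_1\cup K_2\cup K_3$.

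For the first assertion $|touch(p)|\geq 2$, I would rule out the two small cases. If $touch(p)=\emptyset$, then $p$ is outside every $\mathrm{cl}(K_i)$, so some ball $B(p,\delta)$ is disjoint from $K_1\cup K_2\cup K_3$, contradicting encapsulation. If $touch(p)$ is a singleton, say $\{1\}$, then $p\notin\mathrm{cl}(K_2)\cup\mathrm{cl}(K_3)$, so after shrinking $\epsilon$ we may assume $B(p,\epsilon)\cap(K_2\cup K_3)=\emptyset$; the encapsulation condition then forces $B(p,\epsilon)\setminus\{p\}\subseteq K_1$. Picking any unit vector $v$ and small $t>0$, both $p+tv$ and $p-tv$ lie in the convex set $K_1$, and $p$ is their midpoint, so $p\in K_1$ by convexity, a contradiction.

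For the second assertion, suppose $touch(p)=\{i,j\}$, say $\{1,2\}$, and assume for contradiction that some $q\in K_1\cap K_2$. Since $p\notin\mathrm{cl}(K_3)$, I would again shrink $\epsilon$ so that $B(p,\epsilon)\cap K_3=\emptyset$, yielding $B(p,\epsilon)\setminus\{p\}\subseteq K_1\cup K_2$. Consider the line through $p$ and $q$, and choose a point $r$ on the ray emanating from $q$ through $p$, strictly past $p$ and within distance $\epsilon$ of $p$ — concretely, $r=p+\lambda(p-q)$ for a sufficiently small $\lambda>0$. Then $r\in K_1\cup K_2$; without loss of generality $r\in K_1$. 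Since $q\in K_1$ as well, convexity gives $[r,q]\subseteq K_1$, and by construction $p$ lies strictly between $r$ and $q$ on this segment. Hence $p\in K_1$, contradicting $p\notin K_1$.

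The argument is essentially a pair of one-line convexity contradictions, so there is no deep obstacle; the only mild subtlety is the $|touch(p)|=1$ case, where one must remember that a single convex set covering a pointed neighborhood of $p$ already traps $p$ inside it via the midpoint of two diametrically opposite nearby points. Everything else reduces to choosing the auxiliary point $r$ collinear with $p$ and $q$ so that $p$ lies on the segment $[r,q]$, which is what lets convexity push $p$ into one of the two touching sets.
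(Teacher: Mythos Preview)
Your argument is correct and follows essentially the same route as the paper: for the second assertion you pick a point on the ray from $q$ through $p$ just past $p$ and use convexity of $K_i$ (or $K_j$) to trap $p$, which is exactly the contrapositive of the paper's one-line justification. You are in fact more explicit than the paper on the first assertion $|touch(p)|\geq 2$, which the paper leaves to the reader; your midpoint argument for the singleton case is the natural one.
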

	Indeed, if $q \in K_i \cap K_j$ then all the points on the line $\ell(p,q)$ that are separated from $q$ by $p$, are not in $K_i \cup K_j$. Since there exist such points arbitrarily close to $p$, this contradicts the assumption on $p$.
	\begin{observation}\label{obs:2halfspaces}
		If for $p \in S$, $touch(p)= \{i,j\}$ then each of the cones $cone(p,K_i)= \{  (1-\lambda)p+\lambda x: \lambda>0, x\in K_i\},cone(p,K_j)$ is a semi-space\footnote{In the plane, this means that each of $cone(p,K_i), cone(p,K_j)$ is a half-open half-space. In general dimension, this means that there exists a unique orthonormal base $B=\{e_1,\ldots,e_d\}$ of $\Re^d$ such that with respect to $B$, $cone(p,K_i)$ is the set of all points that are lexicographically smaller than $p$, and $cone(p,K_j)$ is the set of all points that are lexicographically larger than $p$.}. Moreover, there is no other point $p' \in S$ with $touch(p') = \{i,j\}$. (See Figure \ref{fig:fig15}.)
	\end{observation}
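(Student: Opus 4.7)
The plan is to prove the two assertions separately. For the lex-half-space structure of the cones, I will reduce to a purely convex-geometric classification of pairs of convex cones covering $\Re^d \setminus \{p\}$ and prove it by induction on $d$. For the uniqueness of $p$, I will cut the picture by a separating hyperplane and reduce to the already-known fact that $f(d-1,2) = 1$.

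Since $touch(p) = \{i,j\}$ forces $p \notin \mathrm{cl}(K_k)$ for the third index $k$, I pick a ball $U = B(p,\epsilon)$ disjoint from $K_k$, so that $U \setminus \{p\} \subseteq K_i \cup K_j$. The sets $C_i = cone(p, K_i)$ and $C_j = cone(p, K_j)$ are convex cones with apex $p$, and $C_i \cup C_j = \Re^d \setminus \{p\}$ (a cone covering a punctured neighborhood of its apex covers everything). Neither cone contains an antipodal pair $\{q, 2p - q\}$ about $p$: such a pair, written as $p + \lambda_1(x_1-p)$ and $p + \lambda_2(x_2-p)$ with $x_1, x_2 \in K_i$ and $\lambda_1, \lambda_2 > 0$, would express $p$ as a convex combination of $x_1, x_2 \in K_i$, contradicting $p \notin K_i$.

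I next prove, by induction on $d$, the following classification: two convex cones in $\Re^d$ with common apex $0 \notin C_1 \cup C_2$, union $\Re^d \setminus \{0\}$, and no antipodal pair in either, must be lex-opposite half-spaces for some orthonormal basis. The base case $d=1$ is immediate. For $d \geq 2$, I pass to the unit sphere: the traces $D_m = C_m \cap S^{d-1}$ are spherically convex and antipode-free, so each lies in a closed hemisphere $H_m$ (by Hahn--Banach separating the cone from the apex). Then $H_1 \cup H_2 = S^{d-1}$ forces $H_2$ to be the hemisphere opposite $H_1$, with common equator $E$, and the antipode-free condition pushes each $D_m$ to contain the whole open hemisphere $H_m \setminus E$ (a missing point would have to lie in the other $D$, pairing up with its antipode already there). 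This reduces the problem on $E \cong S^{d-2}$ to the same statement in dimension $d-1$, where the induction hypothesis supplies a lex structure for $E_m := D_m \cap E$; reassembling, $C_m$ is a lex half-space in $\Re^d$ whose primary axis is the normal to the equatorial hyperplane.

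For the uniqueness assertion, suppose for contradiction that $p' \neq p$ also satisfies $touch(p') = \{i,j\}$. By Observation~\ref{obs:enclosed2}, $K_i \cap K_j = \emptyset$, so a separating hyperplane $H$ places $K_i$ and $K_j$ in opposite closed half-spaces, whence both $p$ and $p'$ lie in $\mathrm{cl}(K_i) \cap \mathrm{cl}(K_j) \subseteq H$. Intersecting $U \setminus \{p\} \subseteq K_i \cup K_j$ with $H$ shows that $p$ is encapsulated within $H \cong \Re^{d-1}$ by the two convex sets $K_i \cap H$ and $K_j \cap H$, and the analogous statement holds at $p'$. Since $f(d-1,2) = 1$, at most one point can be encapsulated within $H$ by two convex sets, forcing $p = p'$ -- a contradiction. (For $d = 1$ the separating hyperplane is a single point, and $p = p'$ is immediate.) The technically most delicate step is the inductive classification on the sphere: verifying that the equatorial slices $E_m$ inherit both spherical convexity and antipode-freeness so that the induction hypothesis applies, and then reassembling $C_m$ as a lex half-space in $\Re^d$ from the lex structure on $E$. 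Once this is in place, the uniqueness claim follows almost immediately.
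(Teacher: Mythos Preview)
Your proof is correct. The paper does not give a proof of this observation at all---it simply states it, refers to a figure, and moves on---so there is no ``paper's proof'' to compare strategies with.

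A couple of remarks on your approach. For the first assertion, your inductive classification on the sphere works, but there is a shorter route you may want to be aware of: once you have established that $C_i\cup C_j=\Re^d\setminus\{p\}$ and that neither cone contains an antipodal pair about $p$, a one-line argument shows $C_i\cap C_j=\emptyset$ (if $q\in C_i\cap C_j$ then $2p-q$ lies in one of them, producing an antipodal pair there). So $C_i,C_j$ are two disjoint convex sets partitioning $\Re^d\setminus\{p\}$; it is then classical (Motzkin, Klee) that a maximal convex subset of $\Re^d\setminus\{p\}$ is exactly a semi-space, and that the complement in $\Re^d\setminus\{p\}$ of a semi-space is the opposite semi-space. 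Hence each $C_i$ is forced to be maximal, i.e., a semi-space. This avoids the spherical induction entirely, though your argument has the merit of being self-contained.

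For the uniqueness assertion, your reduction to $f(d-1,2)=1$ via a separating hyperplane is clean and correct; the paper uses $f(d,2)=1$ freely as an easy fact, so this is a legitimate appeal. One small phrasing issue: your invocation of ``Hahn--Banach separating the cone from the apex'' is slightly loose because the apex lies in the closure of the cone; what you actually use is that a convex set not containing a point admits a (possibly non-strict) separating hyperplane through that point, which is fine.
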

	\begin{figure}[ht]
		\begin{center}
			\scalebox{0.5}{
				\includegraphics[width=0.8\textwidth]{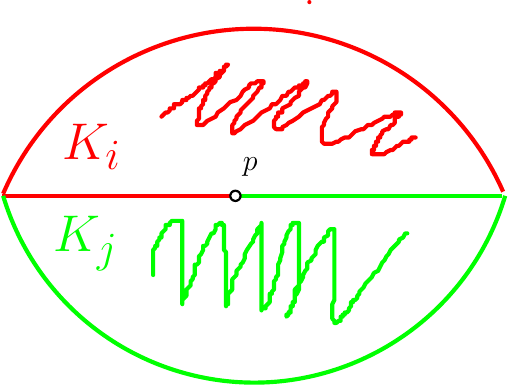}
			}
			\caption{An illustration for Observation \ref{obs:2halfspaces} in $\Re^2$ where $touch(p)=\{i,j\}$. The set $K_i$ is colored with red and $K_j$ is colored with green. In this case $cone(K_i)$ is the upper half-plane, and  $cone(K_j)$ is the lower half-plane.}
			\label{fig:fig15}
		\end{center}
	\end{figure}
	By Observation \ref{obs:2halfspaces}, there is at most one point $p_{12} \in S$ with $touch(p_{12})=\{1,2\}$, at most one point $p_{13} \in S$ with $touch(p_{13})=\{1,3\}$, and at most one point $p_{23} \in S$ with $touch(p_{23})=\{2,3\}$. By Observation \ref{obs:enclosed2}, all other points in $S$ touch all three convex sets $K_1,K_2,K_3$. Let $S' = \{p \in S: |touch(p)|=3\}$ and let $J = \mbox{aff} (S')$ be the flat spanned by $S'$. If $S' = J = \emptyset$ then we are done. Otherwise, by Observation \ref{obs:cone}, $J$ cannot contain a $d$-dimensional ball, and hence, $\mbox{dim}(J)<d$. 
	
	Consider $S \cap J$. This set includes $S'$ and maybe some of the points $p_{ij}$ defined above. Every point in $S \cap J$ is encapsulated (w.r.t.~$J$) by the three convex sets $K_i \cap J$ ($1 \leq i \leq 3$). Hence, by the induction hypothesis, $|S \cap J| \leq f(\dim (J))$. There are 3 cases: 
	
	\medskip
	
	\noindent \textbf{Case 1: $\dim (J)=d-2$:} Then $|S'| \leq f(d-2)$, and since $S$ contains at most three points that are not in $S'$ (i.e., the $p_{ij}$'s above), we have 
	$$|S| \leq |S'|+3 \leq f(d-2)+3 = f(d).$$
	If all inequalities hold with equality, then all three points $p_{12}, p_{13}$ and $p_{23}$ exist, and by Observation \ref{obs:enclosed2}, the $K_i$'s are pairwise disjoint.    
	
	\medskip \noindent \textbf{Case 2: $\dim (J)<d-2$:} Then
	$$|S| \leq |S'|+3 \leq f(\dim J)+3 < f(d).$$
	
	\noindent \textbf{Case 3: $\dim (J)=d-1$:} In this case we prove that the points $p_{12}, p_{13}$ and $p_{23}$ (if exist) are in $J$, and therefore, $|S| = |S'| \leq f(d-1) < f(d)$, and we are done.
	
	Indeed, assume to the contrary (w.l.o.g.) that $p_{12}$ exists and $p_{12} \notin J$. Then $\dim( \mbox{conv}(S' \cup \{p_{12}\})  ) =d$. Moreover, since all the points in $S'$ touch both $K_{1}$ and $K_2$, we have 
	$$\mbox{conv}(S' \cup \{p_{12}\}) \subset \mbox{cl} (K_1) \cap \mbox{cl} (K_2).$$ Therefore, $\dim (\mbox{cl} (K_1) \cap \mbox{cl} (K_2))=d$, hence $K_1 \cap K_2 \neq \emptyset$, in contradiction to Observation \ref{obs:enclosed2}. This completes the proof of parts (a) and (b) of the theorem.
	
	\medskip
	
	For the proof of (c) we need the following claim:
	\begin{claim}\label{cl:extend}
		Let $A,B \subset \Re^d$ be convex sets such that $A \cap B = \emptyset$, and let $z \in \Re^d \setminus (A \cup B)$. Then there exist two convex sets $\tilde{A},\tilde{B} \subset \Re^d$ such that 
		\begin{itemize}
			\item $A \subset \tilde{A}, B \subset \tilde{B}$,
			\item $\tilde{A} \cap \tilde{B} = \emptyset$, \mbox{and}
			\item $\tilde{A} \cup \tilde{B} = \Re^d \setminus \{z\}$,
		\end{itemize}
		if and only if 
		\begin{equation}\label{eq:1}
			A \cap (\mbox{conv}(B \cup \{z\}))= \emptyset \mbox{ and } B \cap (\mbox{conv}(A \cup \{z\}))= \emptyset.
		\end{equation}
	\end{claim}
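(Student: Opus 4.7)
For the forward direction, I would argue by contradiction: suppose $\tilde A, \tilde B$ exist as in the conclusion but, by symmetry, some $a \in A$ also lies in $\mbox{conv}(B \cup \{z\})$. Writing $a = (1-\lambda)b + \lambda z$ with $b \in B$ and $\lambda \in [0,1]$, the endpoints $\lambda = 0$ and $\lambda = 1$ immediately violate $A \cap B = \emptyset$ and $z \notin A$ respectively, so $\lambda \in (0,1)$ and $b, a, z$ are collinear on a line $\ell$ with $a$ strictly between $b$ and $z$. On $\ell$, the traces $\tilde A \cap \ell$ and $\tilde B \cap \ell$ are convex, hence intervals; each contains one of $a, b$, yet neither contains $z$. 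Convexity then forces both intervals to lie on the $b$-side of $z$ in $\ell$, leaving the opposite open ray uncovered by $\tilde A \cup \tilde B$ -- contradicting $\tilde A \cup \tilde B = \Re^d \setminus \{z\}$.

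For the reverse direction I would induct on $d$, the case $d = 0$ being vacuous. The crux of the induction step is to produce a hyperplane $H$ through $z$ that weakly separates $A$ from $B$. To this end, introduce the open cones
\[
C_A = \{z + t(a - z) : a \in A,\; t > 0\} \quad \text{and} \quad C_B = \{z + t(b - z) : b \in B,\; t > 0\},
\]
which are convex by convexity of $A, B$ and positivity of $t$. A direct calculation -- writing a hypothetical common point as $t_1(a-z) = t_2(b-z)$ and setting $\mu = t_2/t_1$ -- shows that (2.1) forces $C_A \cap C_B = \emptyset$: the cases $\mu < 1$, $\mu = 1$, $\mu > 1$ yield respectively $a \in A \cap \mbox{conv}(B \cup \{z\})$, $a = b \in A \cap B$, and $b \in B \cap \mbox{conv}(A \cup \{z\})$, each contradicting one of the hypotheses. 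Applying the Hahn--Banach separation theorem to the disjoint convex sets $C_A, C_B \subset \Re^d$ yields a hyperplane $H$ with $C_A, C_B$ contained in opposite closed half-spaces $\overline{H^+}, \overline{H^-}$; since both $\overline{C_A}$ and $\overline{C_B}$ contain $z$ (let $t \to 0^+$ for any fixed $a \in A$, $b \in B$), we have $z \in \overline{H^+} \cap \overline{H^-} = H$.

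With $H \ni z$ in hand, the disjoint convex sets $A \cap H, B \cap H$ inherit (2.1) by restriction, so the inductive hypothesis inside $H$ (of dimension $d-1$) furnishes $\tilde A_H, \tilde B_H$ that partition $H \setminus \{z\}$ and contain $A \cap H, B \cap H$ respectively. I would then set $\tilde A := H^+_o \cup \tilde A_H$ and $\tilde B := H^-_o \cup \tilde B_H$, where $H^+_o, H^-_o$ denote the open half-spaces. Convexity of $\tilde A$ is the only nontrivial check: any segment joining a point of $H^+_o$ to a point of $\tilde A_H \subset H$ meets $H$ only at its endpoint and otherwise lies in $H^+_o$, so it stays inside $\tilde A$. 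Disjointness, the covering $\tilde A \cup \tilde B = \Re^d \setminus \{z\}$, and $A \subset \tilde A$, $B \subset \tilde B$ all follow immediately from the splittings $A = (A \cap H^+_o) \cup (A \cap H)$ and its analogue for $B$. The main obstacle I anticipate is the construction of $H$: the standard separation of $A$ from $B$ gives no control over where the separating hyperplane sits, and it is precisely the passage to the cones $C_A, C_B$ -- whose closures both contain $z$ -- that pins the separating hyperplane down to pass through $z$.
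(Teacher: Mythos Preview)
Your forward direction is essentially the paper's argument. For the reverse direction, however, you take a genuinely different route. The paper translates condition~(\ref{eq:1}) into the statement that each ray from $z$ meets at most one of $A,B$, passes to the cones $A^{+}=\bigcup_{\lambda>0}\lambda A$ and $B^{+}$ (after setting $z=0$), and then applies Zorn's lemma to the poset of disjoint pairs $(A',B')$ of convex cones extending $(A^{+},B^{+})$; a short algebraic contradiction (if $w\notin\tilde A\cup\tilde B$, maximality gives $w+a=b'$ and $w+b=a'$, whence $a+a'=b+b'\in\tilde A\cap\tilde B$) shows that a maximal pair must cover $\Re^{d}\setminus\{0\}$. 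Your approach instead separates $C_{A},C_{B}$ by a hyperplane $H\ni z$ and inducts on dimension inside $H$. Both arguments are correct; the paper's is dimension-free and delivers $\tilde A,\tilde B$ directly as complementary semi-spaces (cones from $z$), while yours is more explicitly geometric and trades Zorn for finite-dimensional separation plus an induction. One small point worth patching: your separation step requires both $C_{A}$ and $C_{B}$ to be nonempty, but along the induction $A\cap H$ or $B\cap H$ may well become empty even when $A,B$ were not; the fix is easy (take a supporting hyperplane at $z$ to the closure of the single nonempty cone, or any hyperplane through $z$ if both vanish), but it deserves a sentence.
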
 
	\begin{proof}[Proof of Claim \ref{cl:extend}]
		The direction $(\Rightarrow)$ is trivial, since if (w.l.o.g.) $B \cap (\mbox{conv}(A \cup \{z\})) \neq \emptyset$ then there exist $a \in A$ and $b \in B$ such that $b$ in contained in the open segment $(a,z)$. But in this case, no point on the opposite ray $\{(1+\lambda)z-\lambda a: \lambda>0\}$ can belong to $\tilde{A}$ or to $\tilde{B}$, a contradiction.
		
		For the opposite direction, assume for the sake of convenience that  $z=0$ and that (\ref{eq:1}) is satisfied. An equivalent formulation of (\ref{eq:1}) is:
		\begin{equation}\label{eq:1'}
			\mbox{Each ray from 0 meets at most one of the sets }A,B.
		\end{equation}
		We shall now construct $\tilde{A}$ and $\tilde{B}$. First, let $A^+=\bigcup_{\lambda>0}(\lambda A)$ (resp., $B^+=\bigcup_{\lambda>0}(\lambda B)$) be the union of all open rays from 0 via $A$ (resp., $B$). The sets $A^+,B^+$ are convex and $A \subset A^+,B \subset B^+$. Since each ray from the origin intersects $A$ if and only if it intersects $A^+$, and similarly for $B$, the condition (\ref{eq:1'}) is satisfied also for $A^+,B^+$. Therefore, $A^+,B^+$ are disjoint convex sets that are closed under addition and multiplication by a positive scalar. Let 
		$$\D=\{F \subset \Re^d \setminus\{0\}: F \mbox{ is closed under addition and multiplication by a positive scalar}  \},$$ and consider the family
		$$\F=\{(A',B'): A',B' \in \D, A'\cap B' =\emptyset, A^+ \subset A', B^+ \subset B' \}.$$ Define a partial order on $\F$ by $(A',B') \leq (A'',B'')$ iff $A' \subseteq A''$ and $B' \subseteq B''$. Since each chain $\{(A'_i,B'_i)\}$ is bounded from above by $(\cup_i A'_i,\cup_i B'_i )$, by Zorn's lemma, $\F$ contains a maximal element $(\tilde{A},\tilde{B})$. Clearly $\tilde{A},\tilde{B}$ are disjoint convex sets, $A \subset \tilde{A}, B \subset\tilde{B}$. It remains to prove that $\tilde{A} \cup \tilde{B} = \Re^d \setminus\{0\}$. Assume on the contrary that there exists a point $w \in \Re^d \setminus(\tilde{A}\cup \tilde{B} \cup \{0\})$. Then by the maximality of $(\tilde{A},\tilde{B})$, the cone $\{\tilde{A} +\lambda w: \lambda>0\}$ meets $\tilde{B} \cup \{0\}$. Therefore there exist $a \in \tilde{A}$ and $b' \in \tilde{B} \cup \{0\}$ such that $w + a =b'$. Similarly, there exist $b \in \tilde{B}$ and $a' \in \tilde{A} \cup \{0\}$ such that $w + b =a'$. Hence $b'-a=a'-b$ and it follows that $a+a'=b+b'\in \tilde{A} \cap \tilde{B}$, a contradiction. This completes the proof of Claim \ref{cl:extend}.
	\end{proof}
	
	Claim \ref{cl:extend} implies the following consequence:
	\begin{corollary}\label{cor:extend}
		Let $H \subset \Re^d$ be a $(k+1)$-flat and let $J \subset H$ be a $k$-flat. Let $A,B \subset J$ be disjoint convex sets. Assume that $J$ separates $H$ into two open half-flats $H^+,H^-$ and that $z \in H^+$. Then there exist two disjoint convex sets $\tilde{A} , \tilde{B}$ such that $A \subset \tilde{A}, B \subset \tilde{B}$ and $\tilde{A} \cup \tilde{B}=H \setminus\{z\}$ (see Figure \ref{fig:fig16}). Moreover, the sets $\tilde{\tilde{A}}= (\tilde{A} \cap H^+) \cup A, \tilde{\tilde{B}}= (\tilde{B} \cap H^+) \cup B$ are disjoint convex sets such that $\tilde{\tilde{A}} \cap J =A,\tilde{\tilde{B}} \cap J =B$ and $(\tilde{\tilde{A}} \cup \tilde{\tilde{B}}) \cap H^+ = H^+ \setminus \{z\}$.
	\end{corollary}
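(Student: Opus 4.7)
The plan is to deduce Corollary~\ref{cor:extend} directly from Claim~\ref{cl:extend} applied inside the ambient $(k+1)$-flat $H$ (which we may identify with $\Re^{k+1}$), and then cut back to obtain the convex sets $\tilde{\tilde{A}},\tilde{\tilde{B}}$ in $\mathrm{cl}(H^+)$ by the obvious recipe.

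First I would verify that $A,B,z$, viewed inside $H$, satisfy condition~(\ref{eq:1}) of Claim~\ref{cl:extend}. Since $A,B\subset J$ and $z\in H^+$, every point of $\mathrm{conv}(B\cup\{z\})$ that is \emph{not} in $B$ has the form $(1-\lambda)b+\lambda z$ with $b\in B$, $\lambda\in (0,1]$, and therefore lies in $H^+$ (because $H^+$ is the open half of $H\setminus J$ containing $z$, and open half-flats are closed under convex combinations with one boundary endpoint). So $\mathrm{conv}(B\cup\{z\})\cap J=B$, and hence $A\cap\mathrm{conv}(B\cup\{z\})=A\cap B=\emptyset$; the symmetric statement holds as well. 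Now Claim~\ref{cl:extend}, applied inside $H$, yields disjoint convex sets $\tilde{A},\tilde{B}\subset H$ with $A\subset\tilde{A}$, $B\subset\tilde{B}$, and $\tilde{A}\cup\tilde{B}=H\setminus\{z\}$, giving the first assertion of the corollary.

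Next I would set $\tilde{\tilde{A}}=(\tilde{A}\cap H^+)\cup A$ and $\tilde{\tilde{B}}=(\tilde{B}\cap H^+)\cup B$ and check the four required properties. Disjointness is immediate: $A\cap B=\emptyset$ by hypothesis, $(\tilde{A}\cap H^+)\cap(\tilde{B}\cap H^+)\subset\tilde{A}\cap\tilde{B}=\emptyset$, and the cross-terms $A\cap(\tilde{B}\cap H^+)$ and $B\cap(\tilde{A}\cap H^+)$ vanish because $A,B\subset J$ while $H^+\cap J=\emptyset$. The identity $\tilde{\tilde{A}}\cap J=A$ follows because $\tilde{A}\cap H^+$ is disjoint from $J$ and $A\subset J$; similarly $\tilde{\tilde{B}}\cap J=B$. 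Finally,
\[
(\tilde{\tilde{A}}\cup\tilde{\tilde{B}})\cap H^+ \;=\;(\tilde{A}\cap H^+)\cup(\tilde{B}\cap H^+)\;=\;(\tilde{A}\cup\tilde{B})\cap H^+\;=\;(H\setminus\{z\})\cap H^+\;=\;H^+\setminus\{z\}.
\]

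The main point to argue is the convexity of $\tilde{\tilde{A}}$ (and symmetrically $\tilde{\tilde{B}}$). Given two points $x,y\in\tilde{\tilde{A}}$, I would split into cases. If $x,y\in A$, the segment lies in $A$. If $x,y\in\tilde{A}\cap H^+$, the segment lies in $\tilde{A}\cap H^+$ since both $\tilde{A}$ and $H^+$ are convex. The only non-routine case is $x\in A\subset J$ and $y\in\tilde{A}\cap H^+$: here $x,y\in\tilde{A}$, so by convexity of $\tilde{A}$ the whole segment lies in $\tilde{A}$; moreover, every interior point of $[x,y]$ lies in $H^+$ because $H^+$ is open and $J$ is its bounding hyperplane in $H$, so these interior points belong to $\tilde{A}\cap H^+\subset\tilde{\tilde{A}}$, while the endpoint $x\in A\subset\tilde{\tilde{A}}$. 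This is the only step that genuinely uses the half-flat structure of $H^+$, and I expect it to be the only place where care is needed. Once convexity is established, all remaining claims have already been verified.
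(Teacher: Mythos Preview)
Your proof is correct and follows exactly the approach the paper intends: the paper states Corollary~\ref{cor:extend} as an immediate consequence of Claim~\ref{cl:extend} without giving any further argument, and you have correctly supplied the routine details --- verifying condition~(\ref{eq:1}) via $\mathrm{conv}(B\cup\{z\})\cap J=B$, and checking convexity of the cut-back sets $\tilde{\tilde{A}},\tilde{\tilde{B}}$ using $A\subset\tilde{A}$ together with the half-flat geometry of $H^+$.
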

	
	\begin{figure}[ht]
		\begin{center}
			\scalebox{0.5}{
				\includegraphics[width=0.8\textwidth]{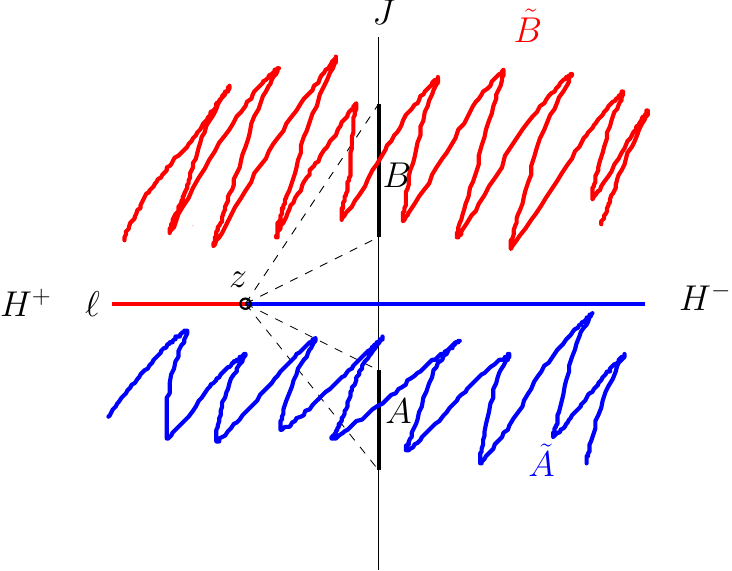}
			}
			\caption{An illustration for Corollary \ref{cor:extend} where $H = \Re^2$ and $J$ is a line. $A$ and $B$ are two segments, as illustrated in the figure. $\tilde{B}$ is the red half-open upper half-plane supported by $\ell$, and $\tilde{A}$ is the blue half-open lower half-plane supported by $\ell$.}
			\label{fig:fig16}
		\end{center}
	\end{figure}

	\medskip
	
	Now we are ready to prove part (c) of Theorem \ref{thm:3encapsuled}. We prove the construction by induction on $d$. Given a construction in $\Re^{d-2}$ of 3 convex sets that encapsulate $f(d-2)$ points, we extend each of the three convex sets to $\Re^d$ in such a way that the previously encapsulated points are still encapsulated, and three more encapsulated points are formed. Since $f(d)=f(d-2)+3$, such a construction completes the proof.  
	
	For the basic case, in $\Re^0$ we can take $K_1=K_2=K_3=\emptyset$, and in $\Re^1$ the convex sets can be $K_1=(-\infty,0), K_2=(0,1)$ and $K_3=(1,\infty)$. The construction for $d=2$ is illustrated in Figure \ref{fig:fig17} (though it can be also obtained by the inductive argument).
	
	\begin{figure}[ht]
		\begin{center}
			\scalebox{0.3}{
				\includegraphics[width=0.8\textwidth]{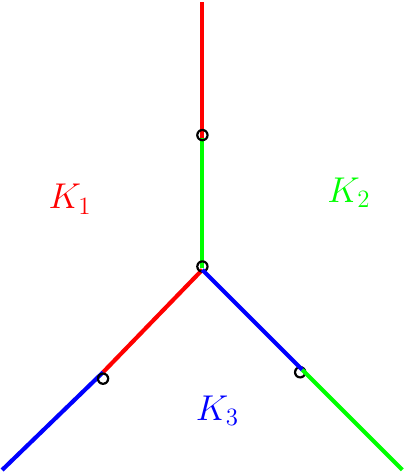}
			}
			\caption{An illustration for the basic case $d=2$ in the proof of Theorem \ref{thm:3encapsuled}(c).}
			\label{fig:fig17}
		\end{center}
	\end{figure}
	
	In the induction step, let $J \subset \Re^d$ be a $(d-2)$-flat. By the induction hypothesis there exist three pairwise disjoint convex sets $K_1',K_2',K_3' \subset J$ such that $|J \setminus (K_1' \cup K_2' \cup K_3')|=f(d-2)$. Let $\pi \subset \Re^d$ be a plane orthogonal to $J$, hence $\pi \cap J$ is a point. W.l.o.g., assume that $\pi \cap J=\{0\}$. Consider 3 points $x_1,x_2,x_3 \in \pi$ equally distributed around 0 (see Figure \ref{fig:fig18}). Consider the $(d-1)$-flat $H_1=\mbox{aff}(J,x_1)$, and let $H_1^+$ be the open half-flat of $H_1$ supported by $J$ that contains $x_1$. (Formally, $H_1^+=\{y + \lambda x_1: y \in J, \lambda>0\}$.) Define $H_2^+, H_3^+$ similarly.
	
	We are supposed to produce three pairwise disjoint convex sets ${K_1},{K_2},{K_3}$, such that $K_i' \subset {K_i}$ for $i=1,2,3$ and $\Re^d \setminus ({K_1}\cup {K_2} \cup {K_3})=J \setminus (K_1' \cup K_2' \cup K_3') \cup \{x_1,x_2,x_3\}$. By Corollary \ref{cor:extend}, $K_1'$ and $K_2'$ can be extended to two disjoint convex sets $K_1^3$ and $K_2^3$ such that $K_1^3 \cap J = K_1'$, $K_2^3 \cap J = K_2'$ and $K_1^3 \cup K_2^3= K_1' \cup K_2' \cup H_3^+ \setminus \{x_3\}$. Similarly, $K_1'$ and $K_3'$ can be extended to two disjoint convex sets $K_1^2$ and $K_3^2$ such that $K_1^2 \cap J = K_1'$, $K_3^2 \cap J = K_3'$ and $K_1^2 \cup K_3^2= K_1' \cup K_3' \cup H_2^+ \setminus \{x_2\}$. $K_2^1 $ and $K_3^1$ are obtained in the same way. Finally, we define ${K_1}= K_1^2 \cup K_1^3 \cup \mbox{int} (\mathrm{conv}(H_2^+ \cup H_3^+))$ and similarly, ${K_2}= K_2^1 \cup K_2^3 \cup \mbox{int} (\mathrm{conv}(H_1^+ \cup H_3^+))$ and ${K_3}= K_3^1 \cup K_3^2 \cup \mbox{int} (\mathrm{conv}(H_1^+ \cup H_2^+))$. The sets ${K_1}, {K_2}$ and ${K_3}$ encapsulate all the $f(d-2)$ points that were originally encapsulated by $K_1',K_2',K_3'$ in $J$, and additionally encapsulate $x_1,x_2,x_3$. Therefore, we obtain $f(d-2)+3=f(d)$ points that are encapsulated by 3 convex sets in $\Re^d$. This completes the proof of part (c) of Theorem \ref{thm:3encapsuled}, and hence the proof of Theorem \ref{thm:3encapsuled}.    
\end{proof}
\begin{figure}[ht]
	\begin{center}
		\scalebox{0.7}{
			\includegraphics[width=0.8\textwidth]{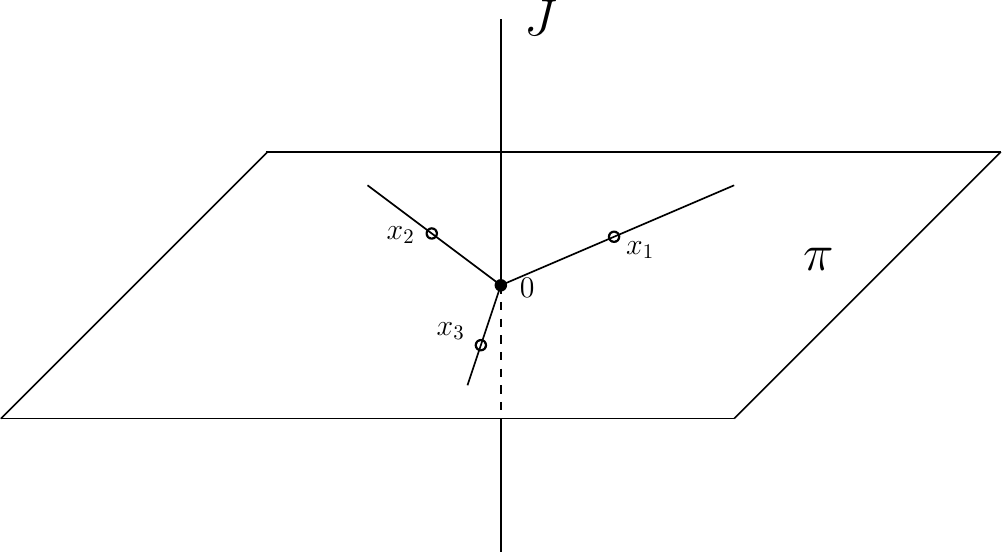}
		}
		\caption{An illustration for the induction step in the proof of Theorem \ref{thm:3encapsuled}(c).}
		\label{fig:fig18}
	\end{center}
\end{figure}

\subsection{The number of points encapsulated by $n$ convex sets in $\Re^d$}\label{subsec:n-encapsulated}
Recall that $f(d,n)$ is the maximal number of points that can be encapsulated by $n$ convex sets in $\Re^d$. Clearly\footnote{See also Theorem \ref{thm:two_sets-intro} below.}, $f(d,1)=0,f(d,2)=1$, and for all $n \geq 1$ we have $f(0,n)=0, f(1,n)=n-1$. After obtaining the value $f(d,3)= \lfloor \frac{3d}{2} \rfloor+1$, the following theorem yields a recursive upper bound on the function $f(d,n)$ in general.

\medskip

\noindent \textbf{Theorem \ref{thm:recursive} - restatement.}
	For every $n > 3,d \geq 2$
	\begin{equation}\label{eq:3}
	f(d,n) \leq   \sum_{i=2}^{n-1} \left(\binom{n}{i} f(d,i) \right) +f(d-2,n).
	\end{equation}
	Consequently,
	\begin{equation}\label{eq:3.1}
		f(d,n) \leq 2d^{n-2}n!.
	\end{equation}
\medskip 

\begin{proof}
	The proof of the recursive formula is by a double induction on $n$ and $d$, where the basic cases $f(1,n),f(d,2)$ and $f(d,3)$ have already been proved above. In the induction step, note that for each of the $\binom{n}{i}$ $i$-subsets of $K_1,\ldots,K_n$, the points of $\Re^d \setminus \bigcup_{i=1}^nK_i$ that touch exactly this subset, are encapsulated by this subset, and by the induction hypothesis their number is bounded by $f(d,i)$. Therefore it remains to prove that the number of points that touch all $n$ sets is at most $f(d-2,n)$.
	
	Indeed, Let $S'$ be the set of points that touch $K_1,\ldots,K_n$, and let $J=\mbox{aff}(S')$. By Observation \ref{obs:cone}, $\dim (J) <d$. If $\dim (J) \leq d-2$, we are done. 
	
	Otherwise, $\dim (J) = d-1$. In this case, as in the argument of Case 3 in the proof of Theorem \ref{thm:3encapsuled}, no point that is encapsulated by some subset of  $K_1,\ldots,K_n$ lies outside $J$.
	Indeed, otherwise there exists $T \subset[n]$ and a point $p \in \Re^d \setminus J$ that touches each $K_i$ for $i \in T$ and no $K_j$ for $j \notin T$. Then $\dim (\mathrm{conv}(S' \cup \{p\}))=d$, and since all the points in $S'$ touch all the $K_i$'s for $i \in T$, we have $\mathrm{conv}(S' \cup \{p\}) \subset \bigcap_{i \in T}\mbox{cl}(K_i)$, 
	and $\bigcap_{i \in T}K_i$ is $d$-dimensional. But then by Observation \ref{obs:cone}, $p$ is not encapsulated by $\bigcup_{i \in T}K_i$.
	
	Consequently, for every $T \subset [n]$, all the points that are encapsulated by $\bigcup_{i \in T}K_i$ are in $J$, and the bound $f(d-1,n)$, which is smaller than the right hand side of (\ref{eq:3}) by the induction hypothesis on $d$, follows.
	
	\medskip The upper bound~\eqref{eq:3.1} follows from the recursive formula~\eqref{eq:3} by induction on $n$, for every fixed $d \geq 2$. The induction bases are the bounds $f(d,2)=1$ and $f(d,3) \leq \lfloor \frac{3d}{2} \rfloor +1 \leq 2d$, proved above. Assume that~\eqref{eq:3.1} holds for all $f(d,n')$, $n'<n$. Hence, by~\eqref{eq:3} we have
	\[
	f(d,n) \leq \sum_{i=2}^{n-1} \left(\binom{n}{i} f(d,i) \right) +f(d-2,n) \leq    \sum_{i=2}^{n-1} \left(\binom{n}{i} \cdot 2d^{i-2}i! \right) + 2(d-2)^{n-2}n!.
	\]
	Define $h_{d,n}(i) = \binom{n}{i} \cdot 2d^{i-2}i!$. Note that for any $d \geq 2$, $n \geq 4$, $2 \leq i \leq n-2$, 
	\[
	\frac{h_{d,n}(i+1)}{h_{d,n}(i)}= \frac{\binom{n}{i+1} \cdot 2d^{i-1}(i+1)!}{\binom{n}{i} \cdot 2d^{i-2}i!} = \frac{n-i}{i+1} \cdot d(i+1) = (n-i)d \geq 2.
	\]
	Hence, $\sum_{i=2}^{n-1} h_{d,n}(i) \leq 2h_{d,n}(n-1)$. Thus, we have    	 
	\begin{align*}
	f(d,n) &\leq \sum_{i=2}^{n-1} \left(\binom{n}{i} \cdot 2d^{i-2}i! \right) + 2(d-2)^{n-2}n! \leq 2nd^{n-3}(n-1)!+ 2(d-2)^{n-2}n! \\
	&= 2n!d^{n-2} \left(\tfrac{2}{d}+(\tfrac{d-2}{d})^{n-2} \right) \leq 2n!d^{n-2}.
	\end{align*}
	This completes the proof of~\eqref{eq:3.1} by induction.
\end{proof}

We note that the upper bound~\eqref{eq:3.1} can be improved further with no much effort. However, it is not far from the best one can get from the recursion~\eqref{eq:3}. Indeed, it is easy to see that for $n$ constant, the dependence on $d$ in~\eqref{eq:3} is $\Omega(d^{n-2})$, and that for $d$ constant, the dependence on $n$ in~\eqref{eq:3} is super-exponential.

\section{Covering the complement of finite unions of convex sets by flats}\label{sec:covering} 

In this section we study covering of $S=\Re^d \setminus (\cup_{i=1}^n K_i)$ by flats -- i.e., affine subspaces of $\Re^d$. The main result we prove is Theorem~\ref{thm:local} which asserts that there exists a covering of $S$ by $g(n,d)$ flats such that any $p \in S$ is covered by a flat whose dimension is $dm(S,p)$.

\subsection{Preliminaries}

Recall that by Definition	\ref{def:dm},
 the flat-dimension of  $S$ is $dm(S)=\mbox{max}\{k: S \mbox{ includes a } k \mbox{-simplex}\}$, and 
the local dimension of $p \in S$  is $dm(S,p)=\min_{\epsilon>0}dm(S \cap B(p,\epsilon))$.

Define $dm(\emptyset)=-1$.
Note that $dm(S)=0$ iff $S$ is non-empty, but does not include a non-degenerate straight line segment, and $dm(S)=d$ iff $\mbox{int}(S) \neq \emptyset$.
 Note that for general sets $S \subset \Re^d$, $dm(S)$ does not necessarily coincides with the topological dimension of $S$, nor with the dimension of its affine hull, $\mbox{dim}( \mbox{aff}(S))$.

  As for the local dimension, $dm(S,p)=k$ if the intersection of $S$ with every neighborhood of $p$ includes a $k$-simplex, but the intersection of $S$ with some neighborhood of $p$ does not include a $(k+1)$-simplex. Refer to Figure \ref{fig:fig3} (and also to Figure \ref{fig:fig4} below) for an example of the local dimension.  Note that if $dm(S,p)=0$ then there exists some neighborhood $U$ of $p$ such that $S \cap U$ contains no segment. In this case $p$ is an isolated point of $S$, since by Theorem \ref{thm:local}, $S$ can be covered by finitely many points, hence $S \cap U$ is finite and therefore $p$ is isolated.

  	\begin{figure}[ht]
  	\begin{center}
  		\scalebox{0.5}{
  			\includegraphics[width=0.8\textwidth]{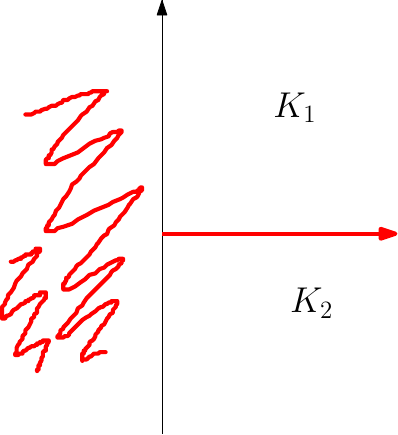}
  		}
  		\caption{In the figure, the red-colored area is $S=\{ (x,y):x<0 \}  \cup \{(x,0): x \geq 0\} = \Re^2 \setminus (K_1 \cup K_2) $, where $K_1=\{ (x,y):x \geq 0 , y>0\} $ and $K_2=\{ (x,y):x \geq 0 , y<0\} $. In this case, $dm(S,(0,0))=2$ and $dm(S,(1,0))=1$. The point $(1,0)$ is an ordinary point of $S$ (with the $x$-axis as the corresponding 1-flat), while $(0,0)$ is not. }
  		\label{fig:fig3}
  	\end{center}
  \end{figure}

 	\paragraph{Ordinary points and ordinary flats.}
 \begin{definition}\label{def:ordinary}
 	 The point $p$ is a $k$-ordinary point of $S$ ($p \in S \subset \Re^d$) if 
 	\begin{enumerate}
 		\item $dm(S,p) = k.$
 		\item For some $k$-flat $H \subset \Re^d$ and for some open neighborhood $U$ of $p$, $S \cap U= H \cap U$.
 	\end{enumerate}
  We call $H$ a $k$-ordinary flat of $S$. Moreover, $p$ is called an ordinary point of $S$ if it is $k$-ordinary for some $0 \leq k \leq d$. Similarly for flats.
\end{definition}
The set of $k$-ordinary points of $S$ is a relatively open subset of $S$. Indeed, if $p$ is a $k$-ordinary point of $S$ and $U,H$ are as in Definition \ref{def:ordinary}, then every point $x \in S \cap U$ is $k$-ordinary as well with the same $k$-flat $H$. 
Note that if Definition \ref{def:ordinary} holds for $U$ then it clearly holds for any smaller neighborhood $p \in U' \subset U$. Moreover, the flat $H$ is determined by the point $p$, $H=\mbox{aff} (U \cap S)$, for an appropriate neighborhood $U$ as in Definition \ref{def:ordinary}.

\begin{remark}
	If $S'$ is relatively open subset of $S$, (i.e., $S'=S \cap U$ where $U$ is an open subset of $\Re^d$) then for every point $p \in S'$, $dm(S,p)=dm(S',p)$. Moreover, $p$ is a $k$-ordinary point of $S$ iff it is a $k$-ordinary point of $S'$ (with the same $k$-ordinary flat.)
\end{remark}

\paragraph{Strong covers.} 
\begin{definition}
	Let $S \subset \Re^d$. A collection $\mathcal{C}$ of flats in $\Re^d$ is a strong cover of $S$, if:
	\begin{enumerate}
		\item $|\mathcal{C}|< \infty$
		\item Each point $p \in S$ belongs to some $k$-flat $H \in \mathcal{C}$, where $k=dm(S,p)$.
		\item $\mathcal{C}$ is minimal, i.e., no proper subcollection of $\mathcal{C}$ satisfies (2).
	\end{enumerate}
\end{definition}

\subsection{Proof of Theorem~\ref{thm:local}}

 Our main result in this section is the following restatement of Theorem~\ref{thm:local}.
 
\medskip \noindent \textbf{Theorem~\ref{thm:local} -- restatement.}
 	Let $K_1,\ldots,K_n$ be convex subsets of $\Re^d$ and let $S=\Re^d \setminus (\cup_{i=1}^n K_i)$.  Then $S$ admits a unique strong cover $\mathcal{C}$. Actually, $\mathcal{C}$ is the collection of all ordinary flats with respect to $S$.

\medskip

Note that $p$ is encapsulated by $K_1, \ldots, K_n$ if and only if $\{p\}$ is an 0-ordinary flat w.r.t. $ \Re^ d \setminus( K_1 \cup \ldots \cup K_n)$. 
Hence, both Theorems \ref{thm:3encapsuled} and \ref{thm:recursive} above, supply quantitative bounds for special cases of Theorem \ref{thm:local}.

It is easy to see that any strong cover $\mathcal{C}$ of $S$ must contain all ordinary flats of $S$. Indeeed, given a $k$-ordinary flat $H$, there exists some $k$-ordinary point $p \in S$ and some open neighborhood $U$ of $p$, such that $S \cap U= H \cap U$. If  $H \notin \mathcal{C}$ then
consider all the $k$-flats  $H' \in \mathcal{C}$. Since for any such $H'$, $H \neq H'$, it follows that $\mbox{dim} (H \cap H') <k$, therefore the neighborhood $U$ is not covered by these (finitely many) $k$-flats $H'$. But as mentioned above, each point in $S \cap U$ is a $k$-ordinary point of $S$ -- a contradiction.

 %$p$ is covered by some $H' \in \mathcal{C}$ with $\mbox{dim} H'=k$. But since $H \neq H'$ it follows that $p \in H \cap H'$ and $\mbox{dim} (H \cap H') <k$, a contradiction.

Therefore, in order to prove Theorem \ref{thm:local} we show that there are only finitely many ordinary flats with respect to $S$, and that condition (2) of Definition \ref{def:ordinary} is satisfied not only for the ordinary points of $S$ (which is trivial), but also for the non-ordinary points of $S$.

\medskip

\begin{proof}[Proof of Theorem \ref{thm:local}]
	The proof is by a primary induction on $d$ and a secondary induction on $n$. %Namely, given $(n,d)$ we assume correctness for $(n,d')$ for all $d'<d$ and correctness for $(n',d)$ for all $n'<n$. 
	For the induction basis, let $p \in S \subset \Re^d$ with $dm(S,p)=k$. If $k=d$ then no inductive argument is needed since $p \in \mbox{cl}(\mbox{int}(S))$, and then the corresponding $d$-flat is $\Re^d$. The case $d=0$ is trivial. If $d=1$ and $k=0$ then $p$ is an isolated point of $S$ and the number of such points $p$ is at most $n-1$. The corresponding 0-flat in this case is clearly $\{p\}$.
	 If $d=1$ and $k=1$ then $\Re^1$ is the corresponding 1-flat. If $n=1$ then for any $p \in S$, $dm(S,p)=d$ and $\Re^d$ is the $d$-ordinary flat.
	 
	% If $n=2$ and $p$ does not touch $K_1$ (resp. $K_2$) then $dm(S,p)=d$ and $\Re^d$ is the ordinary $d$-flat. If $p$ touches both $K_1$ and $K_2$ where $\mbox{int}(K_1) \cap \mbox{int}(K_2) \neq \emptyset$, or if $\mbox{dim}(K_1)<d$ (resp.  $\mbox{dim}(K_2)<d$), then $dm(S,p)=d$ and by Observation \ref{obs:cone} the corresponding $d$-flat is $\Re^d$.
	 
	% The remaining case is where $p$ touches both $K_1$ and $K_2$, $\mbox{int}(K_1) \cap \mbox{int}(K_2) = \emptyset$ and $\mbox{dim}(K_1)=\mbox{dim}(K_2)=d$. In this case $K_1$ and $K_2$ can be separated by a hyperplane $H$ through $p$. We can assume that there exists a neighborhood $U$ of $p$ such that $U=(K_1 \cap U) \cup (K_2 \cap U) \cup (H \cap U)$ (since otherwise in any neighborhood of $p$ there is a point outside $H$, and then we come back to the case $n=1$ and the local dimension is $d$). Then $S \cap H = S \cap U$ and we are done by the induction hypothesis for $n=2$ and $d-1$.

	% If $d=1$ and $k=1$ for some $p \in S$, then $\Re^1$ is an ordinary 1-flat, and if $d=1$ and $k=0$ for every $p \in S$, then $S$ is a finite union of points, and each of them is an ordinary 0-flat w.r.t. $S$. If $n=1$ then for any $p \in S$, $dm(S,p)=d$ and $\Re^d$ is the ordinary $d$-flat. If $n=2$ and $\mbox{int}(K_1) \cap \mbox{int}(K_2) \neq \emptyset$ then by Observation \ref{obs:cone} the corresponding $d$-flat is $\Re^d$. If $n=2$ and $\mbox{int}(K_1) \cap \mbox{int}(K_2) = \emptyset$ then $K_1$ and $K_2$ can be separated by a hyperplane $H$, and the corresponding ordinary flats are  $\Re^d, H$ . 
	
	For larger values of $n$ and $d$ we shall prove by induction the following:
	
	\begin{enumerate}
		\item \label{cond1} For any $0 \leq k \leq d$, there are finitely many $k$-ordinary flats w.r.t.~$S$. 
		\item \label{cond2} For any $p \in S$ with $dm (S,p)=k$ and for any $\epsilon >0$, there is a $k$-ordinary point $q \in S$ such that $||p-q||<\epsilon$.
	\end{enumerate} 

\begin{remark}
	The second statement implies the existence of a $k$-ordinary flat $J$ such that $p \in J$. Indeed, by the statement $p$ is in the closure of the $k$-ordinary points of $S$, each of which is contained in a $k$-ordinary flat. But these flats form a closed set, hence $p$ is contained in at least one of these $k$-ordinary flats.
\end{remark}

The sets to which we apply the induction hypothesis are as follows: 
\begin{itemize}
	\item $S^i = \Re^d \setminus \bigcup_{j \neq i}K_j$, ($i=1,2,\ldots,n$), to which we apply the induction hypothesis with the same $d$ and $n-1$ convex sets.
	\item For a flat $H$ (to be specified later) with $\mbox{dim}(H)<d$, consider $S^H=H \setminus (\bigcup_{j=1}^n(K_j \cap H))$, that is, the restriction\footnote{Note that along the proof, when considering $S^H$, we treat $S^H$ as a subset of $H$ (and not as a subset of $\Re^d$).} of the original system to $H$. Here we apply the induction hypothesis with the dimension $\mbox{dim}(H)$.
\end{itemize}

We will prove that any $k$-ordinary flat w.r.t.~$S$ is either a $k$-ordinary flat w.r.t.~some $S^i$ or a $k$-ordinary flat for some $S^H$ with $\mbox{dim}(H)<d$. Hence we will be able to apply the induction hypothesis either to $S^i$ or to $S^H$. 

%Let $J$ be an ordinary $k$-flat that corresponds to some $k$-ordinary point $p \in S$. Namely, there exists a neighborhood $U$ of $p$ such that $S \cap U = J \cap U$. 
Let $p \in S$ with $dm(S,p)=k$. There are two options:

\begin{itemize}
	\item \textbf{Case 1:} For some $1 \leq i \leq n$, $p$ does not touch $K_i$.
	\item \textbf{Case 2:} $p$ touches all sets $K_i$, $1 \leq i \leq n$.
\end{itemize}
We handle each case separately.

\medskip

\noindent \textbf{Case 1:} Since $p$ does not touch $K_i$, it follows that $p \in \mbox{int}(\Re^d \setminus K_i)$, and thus, $p$ has a positive distance from $K_i$. Let $U$ be a neighborhood of $p$ in $\Re^d$ that satisfies  $U \subset \mbox{int}(\Re^d \setminus K_i)$. Then $S \cap U = S^i \cap U$ and $dm(S^i,p)=dm(S,p)=k$. Moreover, for every corresponding $k$-flat $J$ with $J \cap U = S \cap U$ we have  $J \cap U = S^i \cap U$. Hence, the set of $k$-ordinary flats w.r.t.~$S$ that correspond to a point $p$ as in case 1, is included in the union of the sets of $k$-ordinary flats w.r.t. $S^i$, $1 \leq i \leq n$, and we are done by the induction hypothesis on the  $S^i$'s.

%. By the induction hypothesis on $S^i$, there exists a $k$-flat $J$ such that $J \cap U =S^i \cap U = S \cap U$, and again by the induction hypothesis, both (\ref{cond1}) and (\ref{cond2}) are satisfied, since the set of $k$-ordinary flats w.r.t. $S$ is contained in the union of the sets of $k$-ordinary flats w.r.t. $S^i$, ($1 \leq i \leq n$), each of which is finite. 
% and by shrinking $U$ we can assume that $U \cap K_i = \emptyset$. Therefore, $U \setminus (J \cap U)=U \setminus (S \cap U)$ is covered by $\bigcup_{1 \leq j \leq n, j \neq i}K_j$. Note that inside $U$, $S$ and $S^i$ coincide, hence $dm(S^i,p)=dm(S,p)=k$. Therefore, $J$ must participate as an ordinary $k$-flat (w.r.t. $p$) in any cover of $S^i$ by $k$-flats, and by the induction hypothesus on $n$, there are only finitely many such $J$'s.

\medskip

\noindent \textbf{Case 2:} Let $G=\{p \in S : p \mbox{ touches all }K_i\mbox{'s}\}$ and let $H=\mbox{aff}(G)$ be the flat that is spanned by $G$. If $H=\Re^d$ then $\mbox{conv}(G)$ is $d$-dimensional and $\mbox{int(conv}(G)) \subset \bigcap_{1 \leq i \leq n}K_i$. Then $dm(p,S)=d$ holds for all $p \in G$, and the corresponding $d$-flat is $\Re^d$. From now on we assume $\mbox{dim}(H)<d$. 

 Clearly $dm(S^H,p) \leq k$, and every neighborhood of $p$ in $S$ includes a $k$-simplex. We distinguish between two cases:
 
	\noindent \textbf{Case 2a: Any neighborhood $U$ of $p$ includes a $k$-simplex that is not included in $H$.} In this case, in each sufficiently small neighborhood $U=B(p,\frac{1}{m})$ of $p$ we can find a point $q_m$ with $dm(S,q_m)=k$ and $q_m \notin H$. By the definition of $H$, this means that for some $1 \leq i \leq n$, $q_m$ does not touch $K_i$. By passing to a subsequence $\{q_{m_r}\}_{r=1}^{\infty}$ we can assume that for a fixed $1 \leq i \leq n$, each $q_{m_r}$ does not touch $K_i$. By the induction hypothesis on $S^i$, each $q_{m_r}$ is contained in a $k$-ordinary flat w.r.t.~$S^i$, where the number of such flats is finite. By passing again to a subsequence, we can restrict ourselves to only one such $k$-ordinary flat, $J$. Therefore, we obtain in this flat a sequence of $k$-ordinary points that tend to $p$ and by the closedness of $J$, $p \in J$.
	
	\medskip   

\noindent \textbf{Case 2b: There exists a neighborhood $U$ of $p$ such that every $k$-simplex in $S \cap U$ is contained in $H$.} 
In this case, $dm(S^H,p)=k$ and we can apply the induction hypothesis to $S^H$, since we have already assumed $\mbox{dim}(H)<d$.
By this induction hypothesis there exists a $k$-ordinary flat $J'$ (w.r.t $S^H$) that contains $p$, and a sequence of $k$-ordinary points in $J' \cap S^H$ that tends to $p$.

The only thing that is left to prove is that $J'$ is ordinary not only w.r.t.~$S^H \subset H$ but also w.r.t.~$S \subset \Re^d$. 
Indeed, let $U' \subset U$ be a neighborhood of $p$ such that $U' \cap J' = U' \cap S^H$. If some point $q \in U' $ is the limit of a sequence of points from $S$ outside $H$, each point in the sequence does not touch some $K_i$, and by passing to a subsequence, we can assume that all its elements do not touch $K_i$ for a fixed $i$. Then, by the induction hypothesis on $S^i$, the whole sequence is contained in finitely many $(\leq k)$-flats, none of them is $J'$. The union of all these $(\leq k)$-flats, intersects $J'$ in a $(< k)$-dimensional set, hence we can find $U'' \subset U'$ s.t.  $U'' \cap J' = U'' \cap S$, namely, $J'$ is $k$-ordinary also w.r.t.~$S$.    
\end{proof}

\section{Bounding the number of ordinary hyperplanes}
\label{sec:hyperplanes}
%The proof of Theorem \ref{thm:K1K2} implies the existence of a strong cover of the complement $S$ of $n$ convex sets $K_1, \ldots, K_n \subset \mathbb{R}^d$, with  finitely many flats, but does not supply efficient bounds on the number of these flats. In Theorems \ref{thm:3encapsuled} and \ref{thm:recursive} above we proved upper bounds in the special case of $k=0$. While obtaining a tight bound in general seems to be a difficult problem, in this section we consider a setting in which a tight bound on the number of $k$-flats from Theorem \ref{thm:local} can be proved -- the setting of $(d-1)$-flats. 

In this section we present the proof of Theorem~\ref{thm:num_flats-intro} which essentially determines the maximal possible number of hyperplanes in the strong cover $\mathcal{C}$ of $S=\Re^d \setminus (\cup_{i=1}^n K_i)$, whose existence was proved in Theorem~\ref{thm:local}.  
In Theorem \ref{thm:num_flats} we prove the assertion of the theorem for $d \geq 3$, and in Theorem \ref{thm:num_flats_d=2} we prove it for $d=2$. We formulate these theorems using the notion of \emph{ordinary hyperplanes}, whose number is equal, by the proof of Theorem~\ref{thm:local}, to the notion $\nu_{d-1}$ used in the formulation of Theorem~\ref{thm:num_flats-intro}.
  
\begin{theorem}\label{thm:num_flats} 
	Let $d \geq 3$ and let $S=\Re^d \setminus (\cup_{i=1}^n K_i)$, where $\{K_i\}$ are convex sets. Then the number of ordinary hyperplanes (i.e., $(d-1)$-flats) w.r.t.~$S$ is at most $\binom{n}{2}$. On the other hand, the value $\binom{n}{2}$ is attained for some $K_1,\ldots,K_n$. 
\end{theorem}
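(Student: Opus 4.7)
The plan is to prove the upper bound $\binom{n}{2}$ by constructing an injection $\Phi$ from the ordinary hyperplanes of $S$ into $\binom{[n]}{2}$, and to realize the bound by a Voronoi-diagram construction.

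Given an ordinary hyperplane $H\in\C$, fix a $(d-1)$-ordinary point $p\in H$ and a neighborhood $U$ of $p$ with $S\cap U=H\cap U$. Write $U\setminus H=U^+\cup U^-$. Since $U\setminus H\subset\bigcup_i K_i$ while $H\cap U$ is disjoint from every $K_i$, and since each convex set $K_i\cap U$ is connected and avoids the hyperplane piece $H\cap U$, each nonempty $K_i\cap U$ lies entirely in one of $U^+$ or $U^-$. This partitions the touching indices $T(p)$ into $I^+(H,p)\sqcup I^-(H,p)$; both parts are nonempty, because $U^\pm\subset\bigcup_i K_i$. Define $\Phi(H)=\{i,j\}$ by choosing any $i\in I^+(H,p)$ and $j\in I^-(H,p)$. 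The first key step is a global supporting-hyperplane lemma: if $i\in I^+(H,p)$ then in fact $K_i\subset\overline{H^+}$ in all of $\Re^d$. Indeed, if some $q\in K_i$ lay strictly on the negative side of $H$, then picking $p_n\in K_i\cap U$ with $p_n\to p$, the segments $[p_n,q]\subset K_i$ would cross $H$ at points $r_n\to p$, forcing $r_n\in K_i\cap H\cap U$ for large $n$, contradicting $H\cap U\cap K_i=\emptyset$.

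To verify injectivity of $\Phi$, suppose $H\neq H'$ are ordinary hyperplanes with $\Phi(H)=\Phi(H')=\{i,j\}$. The key lemma forces $K_i$ to lie in a closed half-space bounded by $H$ and in a closed half-space bounded by $H'$, and $K_j$ to lie in the respective opposite closed half-spaces. Now the ordinary region of $H$ is relatively open in $H$, hence $(d-1)$-dimensional, whereas $H\cap H'$ has dimension at most $d-2$; thus we can choose an ordinary $p\in H$ with $p\notin H'$, so that $p$ is strictly on one open side of $H'$. A case split on which of $K_i,K_j$ lies on the same side of $H'$ as $p$ then shows that the other of $\overline{K_i},\overline{K_j}$ lies in the closed half-space of $H'$ opposite to $p$, contradicting $p\in\overline{K_i}\cap\overline{K_j}$ (which holds because $i,j\in T(p)$).

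For the matching construction, take $n$ points $v_1,\dots,v_n\in\Re^d$ such that every pair is a Delaunay edge, equivalently so that the paraboloid lift $(v_i,\|v_i\|^2)\in\Re^{d+1}$ forms a $2$-neighborly lower hull; for $d\geq 3$ such configurations exist by the classical theory of neighborly polytopes (e.g.\ via cyclic polytopes in $\Re^{d+1}$). Let $K_i$ be the open Voronoi cell of $v_i$, an intersection of open half-spaces, hence convex and open. The complement $S$ is then the union of the Voronoi faces; at a relatively interior point of the $(d-1)$-dimensional face between $V_i$ and $V_j$, $S$ locally equals the perpendicular bisector hyperplane $H_{ij}$, which is therefore an ordinary hyperplane. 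Ranging over the $\binom{n}{2}$ Delaunay-adjacent pairs yields $\binom{n}{2}$ distinct ordinary hyperplanes, matching the upper bound. The main obstacle is the upgrade from local to global one-sidedness in the key lemma; once it is established, the injectivity argument reduces to a clean dimension count on the ordinary region of $H$ versus $H\cap H'$, which is precisely what makes $d\geq 3$ the relevant regime.
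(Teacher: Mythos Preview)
Your overall strategy matches the paper's: assign to each ordinary hyperplane a pair $\{i,j\}$ such that $H$ separates $K_i$ from $K_j$, argue this is injective, and realize the bound via a $2$-neighborly $(d{+}1)$-polytope (your Voronoi description is essentially the paper's dual-polytope-and-project construction). The key lemma upgrading local one-sidedness to $K_i\subset\overline{H^+}$ is correct.

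However, your injectivity argument has a genuine gap. You define $\Phi(H)=\{i,j\}$ using a fixed ordinary point, call it $p_H$, so that $i,j\in T(p_H)$. In the injectivity step you then pick a \emph{different} ordinary point $p\in H\setminus H'$ and assert ``$p\in\overline{K_i}\cap\overline{K_j}$ (which holds because $i,j\in T(p)$)''. Nothing ties the specific indices $i,j$ to this new point: the touching set $T(p)$ need not contain $i$ or $j$. In fact your own key lemma gives $\overline{K_i}\cap\overline{K_j}\subset H\cap H'$, so any $p\notin H'$ automatically \emph{fails} to lie in $\overline{K_i}\cap\overline{K_j}$; the premise you invoke is false, not contradicted.

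The paper avoids this by choosing $i,j$ more carefully: it picks them so that $\mathrm{cl}(K_i)\cap H$ and $\mathrm{cl}(K_j)\cap H$ are $(d{-}1)$-dimensional near the ordinary point (possible since finitely many closed convex sets cover the $(d{-}1)$-ball $B\cap\pi$). Then any hyperplane separating $K_i$ from $K_j$ must contain a $(d{-}1)$-dimensional subset of $H$, hence equals $H$, and injectivity is immediate. Your route is also salvageable: since there are only finitely many ordinary hyperplanes, you may pick each defining point $p_H$ off all the others; then $\Phi(H)=\Phi(H')$ would force $p_H\in\overline{K_i}\cap\overline{K_j}\subset H\cap H'$, contradicting $p_H\notin H'$. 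But as written the argument is incomplete. (A minor aside: the dimension count $d-1>d-2$ already works for $d\ge 2$; the restriction $d\ge 3$ is needed only for the construction, not for the upper bound.)
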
  

\medskip

\begin{proof}
	First, we prove that at most $\binom{n}{2}$ hyperplanes are needed for any $d \geq 2$. To this end, we prove that every ordinary hyperplane separates two $K_i$'s, and that it is the only hyperplane that separates them, hence the upper bound $\binom{n}{2}$ follows.
	
	Indeed, let $\pi$ be an ordinary hyperplane that corresponds to some $p \in S$. There exists a ball $B=B(p,\epsilon)$ with $B \cap \pi = B \cap S$. Let $B^+$ (resp., $B^-$) be the intersection of $B$ with the open half-space above (resp., below) $\pi$. W.l.o.g., $B^+$ is covered by $K_1 \cup \ldots \cup K_t$ and $B^-$ is covered by $K_{t+1} \cup \ldots \cup K_n$. Therefore, there exist $1 \leq i \leq t$ and $t+1 \leq j \leq n$ such that $\mbox{dim} (\mbox{cl}{K_i} \cap (B \cap \pi)) = \mbox{dim} (\mbox{cl}{K_j} \cap (B \cap \pi) = d-1)$. Hence, $\pi$ is the only hyperplane that sepaprates $K_i$ and $K_j$. Thus, the upper bound $\binom{n}{2}$ follows.
	
	On the other hand, in dimension $d+1 \geq 4$ there exists a neighborly polytope with $n$ vertices where each pair of vertices is connected by an edge. The dual polytope, $P$, has $n$ facets, where every two facets intersect in a $(d-1)$-face. Assume that $x$ is the (unique) highest vertex of $P$, and apply a central projection from $x' \in \mbox{int}(P)$ which is very close to $x$ (above all other vertices of $P$), to some $d$-dimensional horizontal hyperplane $\pi$ that lies strictly below $P$. The image of each facet that does not contain $x$ is a $d$-dimensional polytope in $\pi$, and the facets that contain $x$ are projected to unbounded $d$-dimensional polyhedral sets in $\pi$. These images of the facets of $P$ in $\pi$ are $n$ convex sets $\hat{K_1}, \ldots,\hat{K_n}$ with $\hat{K_1}\cup \ldots \cup \hat{K_n}=\pi$, and for every $1 \leq i<j \leq n$, $\mbox{dim}(\hat{K_i} \cap \hat{K_j})=d-1$.
	Now, let $K_1 = \mbox{int}(\hat{K_1}), \ldots, K_n = \mbox{int}(\hat{K_n})$. The strong cover of $S= \pi \setminus \bigcup_{i=1}^n K_i$ consists of the $\binom{n}{2}$ $(d-1)$-flats $\mbox{aff}(\hat{K_i} \cap \hat{K_j})$, $1 \leq i < j \leq n$. Hence, the value $\binom{n}{2}$ is obtained.
\end{proof}

\medskip

Actually, Theorem \ref{thm:num_flats} above implies the upper bound $\binom{n}{2}$ already for $d \geq 2$, and a construction with $\binom{n}{2}$ hyperplanes only for 
$d \geq 3$, since neighborly polytopes (other than simplices) exist only for $d \geq 4$. Similarly to the second part of the proof of Theorem \ref{thm:num_flats}, one can obtain
\footnote{Recall that $k$-neighborly $d$-polytopes with $n$ vertices ($d+1<n<\infty)$ exist for $d \geq 2k$. The construction described above leads to a tesselation of $\Re^d$ ( $d \geq 2k>1$) with $n$ polyhedral cells, where each $k$ cells meet in a $(d-1+k)$-face. Removing the  $(d-1+k)$-skeleton of this tesselation, we obtain $n$ convex sets $K_1, \ldots,K_n$ with $S=\Re^d \setminus (K_1 \cup \ldots \cup K_n)$, $\mbox{dim}(S)=d+1-k$, and the number of  $(d+1-k)$-flats in it is $\binom{n}{k}$. Note that by a tiny perturbation we can ensure that two different faces of the same dimension are projected into different faces.}
 a construction with $\binom{n}{3}$  $(d-2)$-flats where $d \geq 5$, and in general, $\binom{n}{k}$ $(d+1-k)$-flats where $d \geq 2k-1$, but in these cases no matching upper bound is known.

\medskip

In the case $d=2$, we prove an upper bound of $(\frac{3}{4}+o(1)) \binom{n}{2}$, and provide a construction for which this value is obtained. 

\medskip

\begin{theorem}\label{thm:num_flats_d=2}
 	Let $S=\Re^2 \setminus (\cup_{i=1}^n K_i)$, where $\{K_i\}$ are convex sets. Then the maximum possible number of ordinary lines w.r.t.~$S$ is the Tur{\'{a}}n number $t(n,4)=(\tfrac{3}{4}+o(1))\binom{n}{2}$. On the other hand, this value is attained for some $K_1,\ldots,K_n$.
\end{theorem}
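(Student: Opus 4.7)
The plan is to prove both the upper bound and a matching construction. For the upper bound I extend the argument of Theorem~\ref{thm:num_flats}: each ordinary line $\ell$ determines a pair $\{i,j\}$ such that $K_i$ and $K_j$ lie in opposite closed half-planes of $\ell$, both having a $1$-dimensional trace on $\ell$ through an ordinary point, and $\ell$ is uniquely determined by the pair. Hence the map from ordinary lines to such pairs is injective, and if $G$ denotes the graph on $[n]$ whose edges are all such pairs, then the number of ordinary lines is at most $|E(G)|$.

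The crux, specific to $d=2$, is that $G$ is $K_5$-free. Suppose on the contrary that five sets $K_{i_1},\ldots,K_{i_5}$ pairwise span an edge of $G$. Then every pair is interior-disjoint (being on opposite closed sides of its ordinary line) and their closures share a segment of positive length, so we obtain five pairwise interior-disjoint convex regions of $\Re^2$ that are pairwise segment-adjacent. Placing a point inside each region and routing each adjacency edge from one point through the interior of its region, across the shared segment, and into the interior of the other region will produce a planar embedding of the adjacency graph: edges sharing a vertex can be routed along disjoint simple arcs inside their common region, while two edges with disjoint endpoint-pairs lie in four entirely disjoint regions. This would embed $K_5$ in the plane, contradicting Kuratowski's theorem. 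By Tur\'{a}n's theorem $|E(G)| \leq t(n,4)$, which gives the desired upper bound.

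For the matching construction I will realize the Tur\'{a}n graph $T(n,4)$ as the separation graph $G$ of an explicit configuration. Partition $[n]$ into $4$ nearly-balanced classes and, for each pair $\{i,j\}$ from different classes, choose a line $\ell_{ij}$ in general position together with a prescribed closed side on which $K_i$ should lie. Define each $K_i$ to be the intersection of the $\approx 3n/4$ half-planes coming from these constraints. For sufficiently generic choices of the $\ell_{ij}$'s each $K_i$ is a non-empty convex polygon, each $\ell_{ij}$ is an ordinary line of the resulting complement $S$ (witnessed at a point where $\mbox{cl}(K_i)\cap\mbox{cl}(K_j)\cap\ell_{ij}$ is $1$-dimensional and no other $K_k$ touches $\ell_{ij}$ in a neighborhood of that point), and no additional ordinary lines appear, producing exactly $t(n,4)$ ordinary lines. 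The main obstacle will be making the planarity routing argument rigorous in the presence of possibly unbounded convex regions, together with verifying that the proposed construction yields neither too few ordinary lines (each intended edge $\{i,j\}$ must indeed contribute its $\ell_{ij}$) nor too many (no within-class pair, nor any other pair, may accidentally produce a further ordinary line).
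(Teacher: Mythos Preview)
Your upper bound argument is correct and essentially identical to the paper's: each ordinary line is assigned to a separating pair $\{i,j\}$, the map is injective, five mutually adjacent indices would force a planar drawing of $K_5$, and Tur\'{a}n finishes.

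The genuine gap is in your lower-bound construction. You assert that for ``sufficiently generic'' lines $\ell_{ij}$ the set $K_i$ (an intersection of $\approx 3n/4$ closed half-planes) will be non-empty, will have every $\ell_{ij}$ active as an edge, will have its edge on $\ell_{ij}$ overlap the edge of $K_j$ there, and the overlap will not be swallowed by any third $K_k$. None of this follows from genericity. The overlap condition in particular is not an open condition one can obtain for free: the edge of $K_i$ on $\ell_{ij}$ is cut out by the other lines bounding $K_i$, while the edge of $K_j$ on $\ell_{ij}$ is cut out by an entirely different family of lines, and for generic choices these two segments on $\ell_{ij}$ have no reason to meet. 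Likewise, if $k$ lies in the same class as $i$, there is no line $\ell_{ik}$ at all, so nothing prevents $K_k$ from covering the would-be ordinary point on $\ell_{ij}$ and destroying it. You yourself flag these as ``the main obstacle'', but you give no mechanism to overcome them; as it stands the construction is a hope, not a proof.

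The paper's construction is considerably more delicate than a generic line arrangement, and this is not an accident. It starts from a fixed four-region tiling (Step~1), then bends each shared edge $e_{ij}$ into a very shallow circular arc inside $K_i$ (Step~2), subdivides that arc by $n_i+1$ points (Step~3), bends each resulting chord outward by a second, even shallower arc and subdivides again by $n_j+1$ points (Step~4), and finally defines $K_i^\ell$ as the convex hull of a specific selection of the resulting short chords (Step~5). The nested shallow arcs are exactly what guarantees, simultaneously for all $t(n,4)$ cross-class pairs, that the relevant chords are in convex position and that each pair $K_i^{\ell_i},K_j^{\ell_j}$ shares precisely one of them as a common boundary segment. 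Your proposal would need either this level of control or a different explicit construction; the word ``generic'' does not supply it.
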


\begin{proof} 
	For the upper bound we consider the geometric graph $G$, whose vertices are $n$ arbitrary points $x_i  \in \mbox{int}(K_i)$ ($1 \leq i \leq n, \forall i \neq j: x_i \neq x_j$), and whose edges are defined as follows:   
	For each ordinary line $\ell$ that corresponds to a 1-ordinary point $p_{\ell}$, there exist (as in the proof of Theorem \ref{thm:num_flats}) $K_i,K_j$ with $\dim K_i = \dim K_j=2$, such that $\ell$ is the only line that weakly separates $K_i$ and $K_j$, $p \in \mbox{cl}{(K_i)} \cap \mbox{cl}({K_j}) $. (Clearly, $\mbox{int}(K_i) \cap \mbox{int}(K_j) = \emptyset$.) We connect $x_i$ to $x_j$ in $G$ by a two-edges polygonal path  $[x_i,p_{\ell},x_j]$.
	
	By the disjointness of $\mbox{int}(K_i)$ and $\mbox{int}(K_j)$, the graph $G$ (though not necessarily planar) does not include $K_5$, since otherwise there exist 5 pairwise disjoint open convex sets whose closures are pairwise intersecting, which is impossible since the restriction of $G$ to the corresponding vertices is plannar by the disjointness of the 5 $\mbox{int}(K_i)$'s. Hence, by Tur\'{a}n's Theorem, $|E(G)|\leq t(n,4) = (\frac{3}{4}+o(1)) \binom{n}{2}$, and the upper bound on the number of ordinary lines follows.
	
	On the other hand, we present a construction 
	that realizes every complete 4-partite graph $K_{n_1,n_2,n_3,n_4}$ with $n_1+n_2+n_3+n_4=n$, in particular Tur\'{a}n's graph.
	 The construction is a bit complicated, hence we present it in five steps accompanied by illustrations.
	
	\medskip
	
	\noindent \textbf{Step 1:}
	First consider four closed convex sets $K_1,K_2,K_3,K_4$ as in Figure \ref{fig:fig7}. Let $e_{ij}=K_i \cap K_j$ for $1 \leq i < j \leq 4$.  Each segment $e_{ij}$ on the line that separates $K_i$ from $K_j$. 
	
	 \begin{figure}[ht]
		\begin{center}
			\scalebox{0.6}{
				\includegraphics[width=0.8\textwidth]{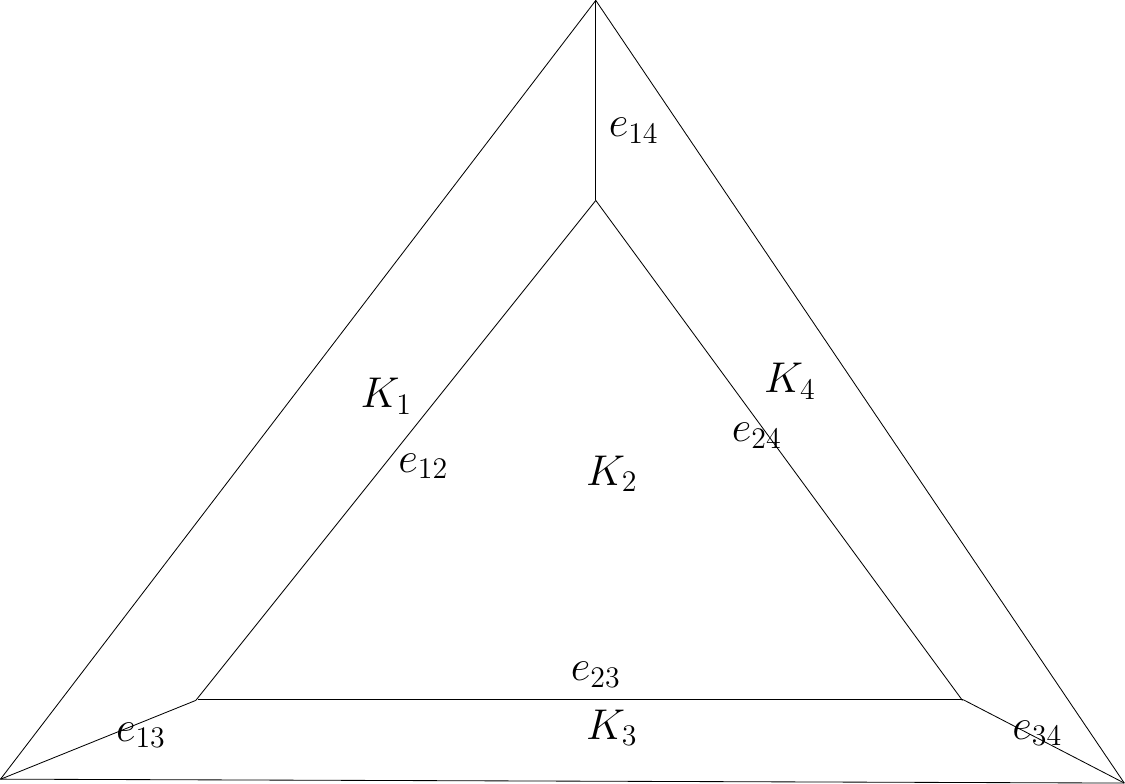}
			}
			\caption{An illustration for step 1 in the proof of Theorem \ref{thm:num_flats_d=2}.}
			\label{fig:fig7}
		\end{center}
	\end{figure}

\medskip
Our goal is to construct for each $1 \leq i \leq 4$, $n_i$ open convex sets $K_i^1,\ldots,K_i^{n_i}$, each of which is a subset of a tiny perturbation of $K_i$. This construction has the property that for every $1 \leq i <j \leq 4$, and for every $1 \leq \ell_i \leq n_i,1 \leq \ell_j \leq n_j$, there exists an ordinary line separating $K_i^{\ell_i}$ from $K_j^{\ell_j}$. By taking the $n_i$'s ($1 \leq i \leq 4$) as equal as possible and satisfying $n_1+n_2+n_3+n_4=n$, we obtain in this way $n$ convex sets with $\Sigma_{i<j} n_i n_j=t(n,4)$ ordinary lines as asserted.

\medskip

\noindent \textbf{Step 2:}
For every $1 \leq i<j \leq 4$, draw a circular arc $\gamma_{ij}$ through the endpoints of $e_{ij}$ inside $K_i$ (see Figure \ref{fig:fig8}).
	 \begin{figure}[ht]
	\begin{center}
		\scalebox{0.65}{
			\includegraphics[width=0.8\textwidth]{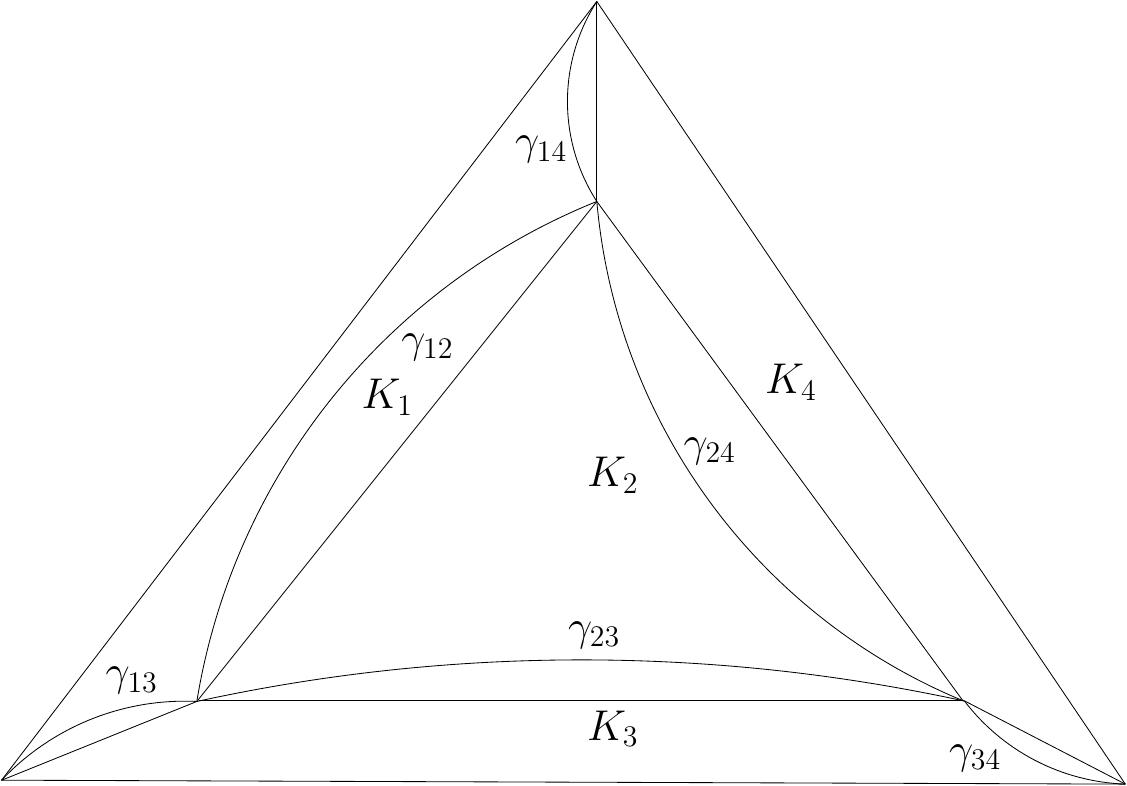}
		}
		\caption{An illustration for step 2 in the proof of Theorem \ref{thm:num_flats_d=2}.}
		\label{fig:fig8}
	\end{center}
\end{figure}
The radius of the circle associated with this arc, should be very large (in a sense to be clarified later). Note that in Figures \ref{fig:fig8}-\ref{fig:fig11} this radius is not large enough, just to make the illustration easier to follow.

\medskip

\noindent \textbf{Step 3:}
For every $1 \leq i < j \leq 4$, split $\gamma_{ij}$ by $n_i +1$ points into $n_i+2$ parts. Let the edges of the polygonal path through these points (and the endpoints of $e_{ij}$) be $a_0^{ij}, a_1^{ij}, \ldots, a_{n_i+1}^{ij}$. See Figure \ref{fig:fig9}, in which the polygonal paths corresponding to $\gamma_{12}, \gamma_{23}$ and $\gamma_{24}$ are illustrated. In this figure, $n_1=2$ and $n_2=3$. 
	 \begin{figure}[ht]
	\begin{center}
		\scalebox{1.0}{
			\includegraphics[width=0.8\textwidth]{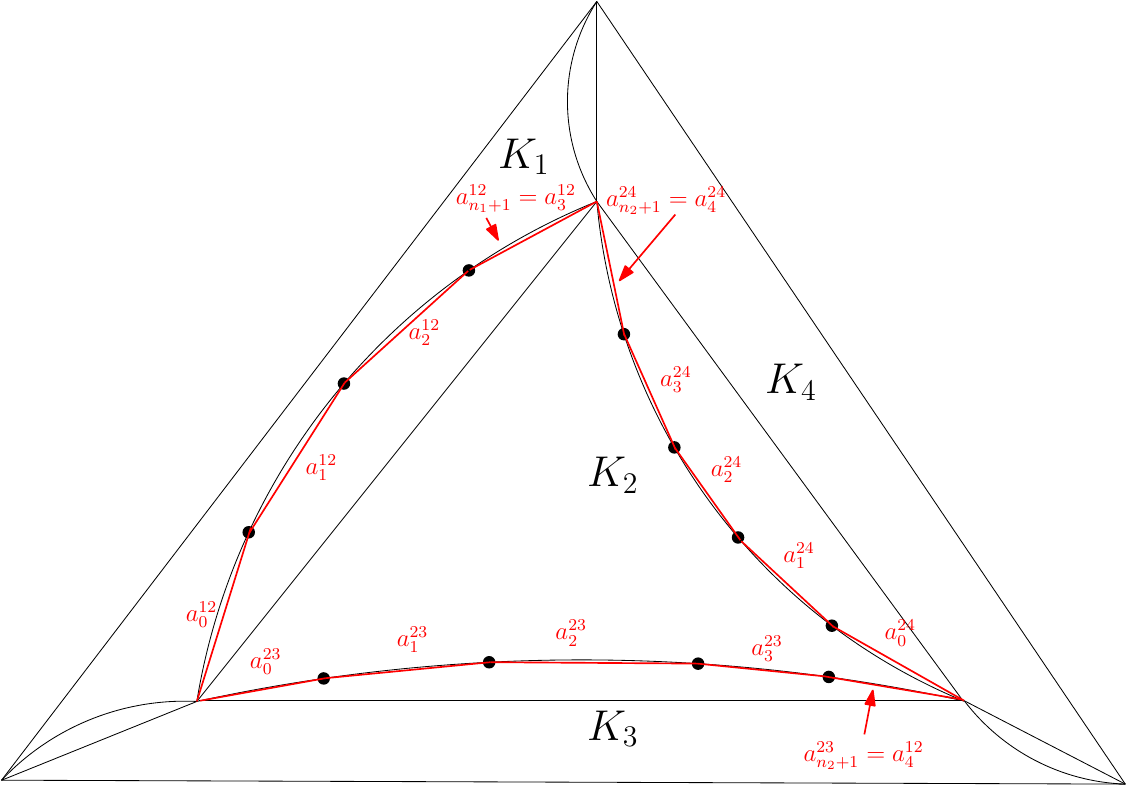}
		}
		\caption{An illustration for step 3 in the proof of Theorem \ref{thm:num_flats_d=2}. Here $n_1=2,n_2=3$.}
		\label{fig:fig9}
	\end{center}
\end{figure}

\medskip

\noindent \textbf{Step 4:}
For every $1 \leq i < j \leq 4$, and $1 \leq t  \leq n_i$, draw an arc $\delta_t^{ij}$ through the endpoints of $a_t^{ij}$ to the side of $K_j$. Again, the radius of the circle associated with $\delta_t^{ij}$ should be very large (as will be determined later), much larger than in Figure \ref{fig:fig10} (in which still $n_1=2$ and $n_2=3$).
	 \begin{figure}[ht]
	\begin{center}
		\scalebox{1.0}{
			\includegraphics[width=0.8\textwidth]{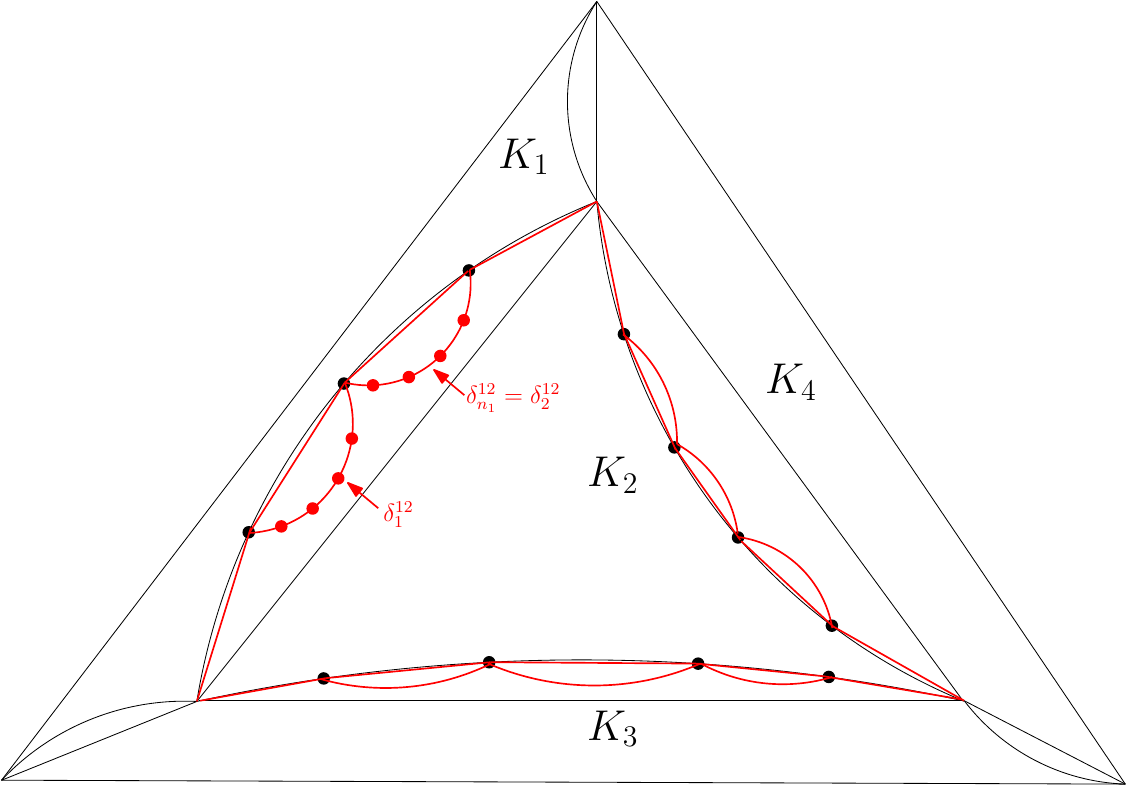}
		}
		\caption{An illustration for step 4 in the proof of Theorem \ref{thm:num_flats_d=2}. Here $n_1=n_3=n_4=2,n_2=3$.}
		\label{fig:fig10}
	\end{center}
\end{figure}
Partition $\delta_t^{ij}$ into $n_j+2$ sub-arcs by adding $n_j+1$ points on it, and let the edges of the polygonal path through these points (and the endpoints of $a_t^{ij}$) be $b_{t,0}^{ij}\ldots,b_{t,n_j+1}^{ij}$. In Figure \ref{fig:fig11}, these polygonal paths are illustrated, where $n_1=n_3=n_4=2$, and $n_2=3$. 
	 \begin{figure}[ht]
	\begin{center}
		\scalebox{1.2}{
			\includegraphics[width=0.8\textwidth]{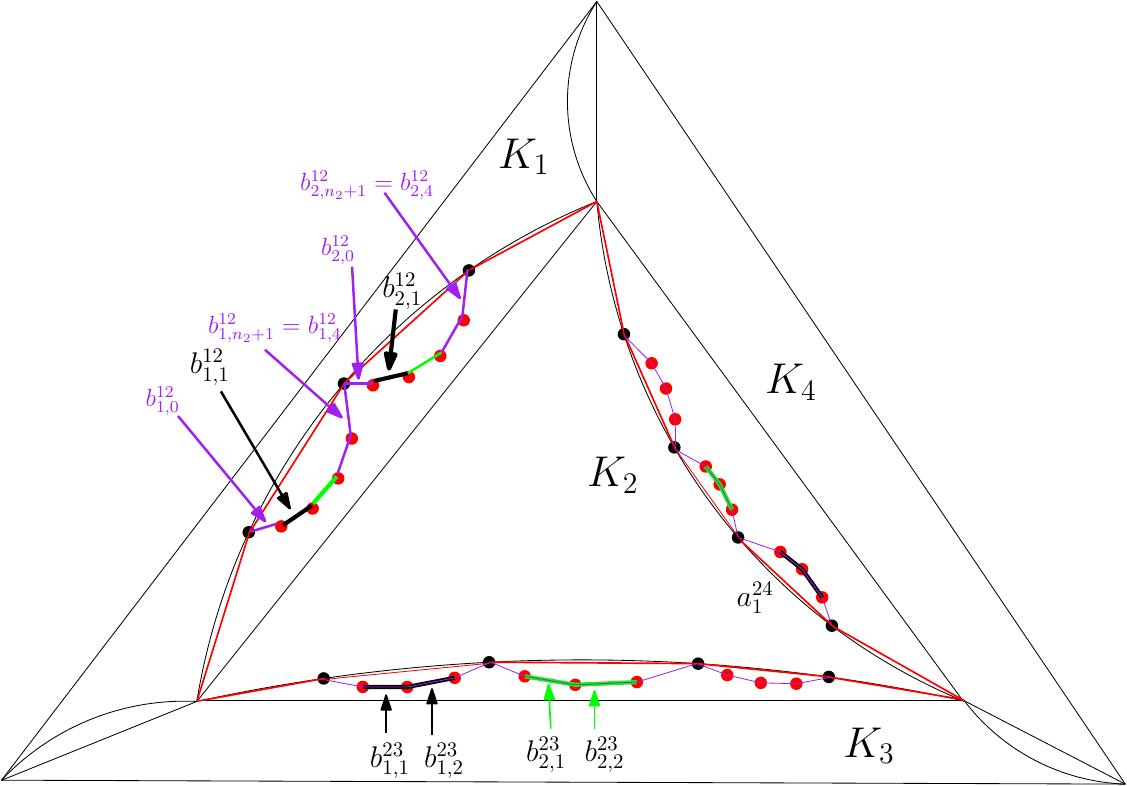}
		}
		\caption{An illustration for steps 4-5 in the proof of Theorem \ref{thm:num_flats_d=2}.  Here $n_1=n_3=n_4=2,n_2=3$.}
		\label{fig:fig11}
	\end{center}
\end{figure}

\medskip

\noindent \textbf{Step 5:}
Let  $1 \leq i  \leq 4$. In this step we construct $n_i$ open convex sets $K_i^1, \ldots,K_i^{n_i}$, each of which is a subset of a tiny perturbation of $K_i$. Each of these $n_i$ convex sets will be the interior of the convex hull of several segments, and the radii of the circles mentioned in steps 2 and 4 should be so large, such that all the segments that are involved in the construction of each $K_i^{\ell}$ ($1 \leq \ell \leq n_i$) will be on the boundary of $\mbox{cl} (K_i^{\ell})$. Note that each $K_i^{\ell}$ is an open convex polygon\footnote{This is the reason why in the construction we omit the first and the last segments of each arc.}.   

The first such set, $K_i^1$, is the interior of the convex hull of the following segments: 

$$ \begin{cases}
	b_{1,1}^{ij},	b_{2,1}^{ij}, \ldots 	b_{n_j,1}^{ij} & :1 \leq j<i \\
	b_{1,1}^{ij},	b_{1,2}^{ij}, \ldots 	b_{1,n_j}^{ij} & :i<j \leq 4
\end{cases}$$

In Figure \ref{fig:fig11} (that still demonstrates the case where $n_1=2,n_2=3$), the segments that form $K_2^1$ are colored black. Roughly speaking, from the arcs inside $K_i$ we take consecutive `$b$' edges whose endpoints lie on the first `$\delta$'-arc corresponding to each edge of $K_i$, and from the arcs out of $K_i$ that correspond to an edge of $K_i$, we take the first `$b$'-edge from each `$\delta$'-arc.
Recall that the radii of all arcs involved are so large, that (unlike the drawing in Figures \ref{fig:fig8} -\ref{fig:fig11}) all these `$b$'-edges that form each $K_i^{\ell}$ are in convex position.

The second such set, $K_i^2$, is the interior of the convex hull of the following segments, colored by green in Figure \ref{fig:fig11}:

 $$ \begin{cases}
 	b_{1,2}^{ij},	b_{2,2}^{ij}, \ldots 	b_{n_j,2}^{ij} & :1 \leq j<i \\
 		b_{2,1}^{ij},	b_{2,2}^{ij}, \ldots 	b_{2,n_j}^{ij} & :i<j \leq 4
 \end{cases}$$

In general, for each $1 \leq \ell \leq n_i $, the set $K_i^{\ell}$ is the interior of the convex hull of the following segments: 

 $$ \begin{cases}
	b_{1,\ell}^{ij},	b_{2,\ell}^{ij}, \ldots 	b_{n_j,\ell}^{ij} & :1 \leq j<i \\
		b_{\ell,1}^{ij},	b_{\ell,2}^{ij}, \ldots 	b_{\ell,n_j}^{ij} & :i<j \leq 4
\end{cases}$$

In this way, as was mentioned just after Step 1, we construct $n$ open convex sets with  
$\Sigma_{i<j}n_in_j$ distinct ordinary lines. This completes the proof of Theorem \ref{thm:num_flats_d=2}.
\end{proof}

\section{The complement of the union of two convex sets}
\label{sec:two}

%In this section, we examine the scenario where the coverage configuration by $k$-flats presents the most definitive case. Once $n$, the number of convex sets from Theorem \ref{thm:local}, is exactly 2, we prove that for $S=\Re^d \setminus (K_1 \cup K_2)$ and for every $0 \leq k \leq d$ there is at most one $k$-ordinary flat w.r.t. $S$. Moreover, for each subset $T$ of $\{0,\ldots,d\}$, there exists two convex sets $K_1,K_2 \subset \Re^d$ and a strong cover of $\Re^d \setminus (K_1 \cup K_2)$ that consists of one $k$-ordinary flat for each $k \in T$, and no $k$-ordinary flat for each $k \notin T$.

In this section we present the proof of Theorem~\ref{thm:two_sets-intro}, which fully characterizes the set of possible vectors $(v_0,\ldots,v_d)$ attainable as the numbers of ordinary $k$-flats of $S=\Re^d \setminus (K_1 \cup K_2)$, where $K_1,K_2 \subset \Re^d$ are convex sets. The theorem follows immediately from Claims~\ref{cl:2convexsets}, \ref{cl:K1K2} below.

A construction for $d=2$, in which a strong cover of $S$ consists of exactly one $k$-ordinary flat for every $0 \leq k \leq d$ (or in other words, a construction which corresponds to the vector $(v_0,v_1,v_2)=(1,1,1)$) is presented in Figure \ref{fig:fig4}.  

\begin{figure}[ht]
	\begin{center}
		\scalebox{0.5}{
			\includegraphics[width=0.8\textwidth]{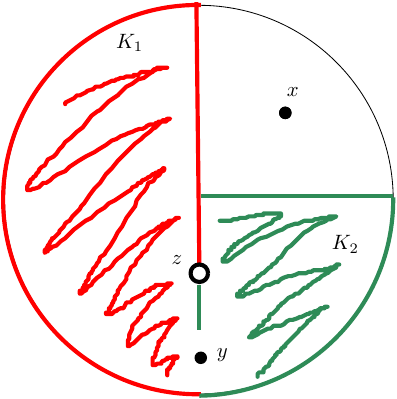}
		}
		\caption{The circle in this figure is centered at the origin. The sets $K_1$ and $K_2$ are colored with red and green, respectively. The set $K_1$ contains the left half circle (including its arc) and the segment $[(0,1),(0,-\frac{1}{3}))$, and $K_2$ contains the quarter right-bottom circle and the segments $[(0,0),(1,0)]$ and $((0,-\frac{1}{3}),(0,-\frac{2}{3})]$. The point $x \in S = \Re^2 \setminus (K_1 \cup K_2)$ is of local dimension $d=2$ and the corresponding 2-ordinary flat is $\Re^2$. The point $y$ is of local dimension $1$ and the corresponding 1-ordinary flat is the $y$-axis. The point $z$ is of local dimension $0$ and the corresponding 0-ordinary flat is $\{z\}$ itself.   }
		\label{fig:fig4}
	\end{center}
\end{figure}

\medskip

The following claim implies that for every $0 \leq k \leq d$, a strong cover of $S$ contains at most one $k$-ordinary flat.

%Formally, we prove the following:
\begin{claim}\label{cl:2convexsets}
	Let $K_1,K_2 \subset \Re^d$ be convex sets, and let $S = \Re^d \setminus (K_1 \cup K_2)$. If $J_1,J_2$ are $k$-ordinary flats w.r.t. $S$ then $J_1 \subset J_2$ or  $J_2 \subset J_1$.
\end{claim}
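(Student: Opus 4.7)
The plan is to argue by induction on the ambient dimension $d$, reducing via a canonical separating hyperplane. The base $d=1$ is immediate: for $k=1$ only $\Re$ itself is available, and for $k=0$ both $J_i$ would be isolated points forbidden by $f(1,2)=1$. Fix $d\geq 2$ and assume, for contradiction, that $J_1\neq J_2$ are two distinct $k$-ordinary flats. The cases $k=d$ (both equal $\Re^d$) and $k=0$ (forbidden by $f(d,2)=1$, since two convex sets encapsulate at most one point) are trivial, so assume $0<k<d$. Let $p_i\in J_i$ be a $k$-ordinary point with open neighborhood $U_i$ satisfying $S\cap U_i=J_i\cap U_i$, for $i=1,2$.

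The first key step is to analyze the local structure at $p_1$. Since $U_1\setminus J_1\subset K_1\cup K_2$ and $\mbox{conv}(U_1\setminus J_1)=U_1$ (as $J_1$ has codimension $\geq 1$), both $K_1$ and $K_2$ must touch $p_1$: otherwise the one that did not would be absent from a neighborhood, and the other alone, being convex and containing a dense subset of $U_1$, would cover $U_1$, contradicting $p_1\in S$. Combined with $\mbox{int}(\mbox{cl}(K_i))=\mbox{int}(K_i)\not\ni p_1$, this implies that the tangent cones $C_i$ of $\mbox{cl}(K_i)$ at $p_1$ are proper closed convex cones; and since $\mbox{cl}(K_1)\cup\mbox{cl}(K_2)\supseteq U_1$, we also have $C_1\cup C_2=\Re^d$. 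A short polar-cone argument shows that two proper closed convex cones covering $\Re^d$ must be opposite closed half-spaces. Let $H^0(p_1)$ be the corresponding affine hyperplane through $p_1$; the inclusion $\mbox{cl}(K_i)\subset p_1+C_i$ then shows that each $K_i$ globally lies in one closed half-space of $H^0(p_1)$, while the observation that every point of $J_1\cap U_1$ touches both sets (by the same reasoning applied at those points) yields $v\in C_1\cap C_2=H^0(p_1)-p_1$ for each direction $v$ of $J_1$, i.e., $J_1\subset H^0(p_1)$.

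The second key step is to pin down $H^0(p_1)$ canonically. Let $H^*=\mbox{aff}(\mbox{cl}(K_1)\cap\mbox{cl}(K_2))$. The local touching set $H^0(p_1)\cap U_1\subset\mbox{cl}(K_1)\cap\mbox{cl}(K_2)$ is $(d-1)$-dimensional, so $\mbox{dim}\,H^*\geq d-1$. The alternative $\mbox{dim}\,H^*=d$ is ruled out by Observation \ref{obs:cone}: a $d$-ball inside $\mbox{cl}(K_1)\cap\mbox{cl}(K_2)$ would in fact lie in $K_1\cap K_2$ (interiors of convex sets coincide with interiors of their closures), and the opposite double cone from $p_1$ through it would constitute a $d$-dimensional subset of $S$ meeting every neighborhood of $p_1$, contradicting $dm(S,p_1)=k<d$. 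Hence $H^*=H^0(p_1)$, and symmetrically $H^*=H^0(p_2)$, so $J_1,J_2\subset H^*$. For $k=d-1$ this forces $J_1=H^*=J_2$ immediately. For $k<d-1$, $J_1$ and $J_2$ are two distinct $k$-ordinary flats for $S\cap H^*=H^*\setminus((K_1\cap H^*)\cup(K_2\cap H^*))$ inside the $(d-1)$-dimensional ambient $H^*$, contradicting the inductive hypothesis. The main obstacle is the combined tangent-cone and dimension-via-Observation \ref{obs:cone} argument, which canonically produces the hyperplane $H^*$ that must contain every $k$-ordinary flat; once this is in hand, the descent on $d$ is routine.
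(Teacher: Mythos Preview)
Your argument is correct, but it takes a substantially different route from the paper's.

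The paper gives a direct, non-inductive proof in a few lines: assuming $J_1\not\subset J_2$ and $J_2\not\subset J_1$, choose ordinary points $p_i\in J_i\setminus J_{3-i}$ with witnessing neighborhoods $U_i$, and look at the line $\ell=\mbox{aff}(p_1,p_2)$. Since $\ell\cap J_i=\{p_i\}$, on $\ell$ one finds points $q_2',p_2,q_2,q_1,p_1,q_1'$ in this order with $q_2',q_2,q_1,q_1'\in K_1\cup K_2$ and $p_1,p_2\notin K_1\cup K_2$. Hence $(K_1\cup K_2)\cap\ell$ has at least three connected components, while two convex sets can contribute at most two intervals on a line --- contradiction.

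Your approach instead builds structural information: at any $k$-ordinary point (with $0<k<d$) you show via tangent cones that $\mbox{cl}(K_1),\mbox{cl}(K_2)$ lie in opposite closed half-spaces bounded by a hyperplane through that point, and then you identify this hyperplane canonically as $H^*=\mbox{aff}(\mbox{cl}(K_1)\cap\mbox{cl}(K_2))$, forcing every proper ordinary flat into $H^*$ and enabling induction on $d$. Two small points you left implicit are easily filled: the inclusion $H^0(p_1)\cap U_1\subset\mbox{cl}(K_1)\cap\mbox{cl}(K_2)$ (which follows since each open half-ball on one side of $H^0(p_1)$ is forced into a single $\mbox{cl}(K_i)$), and the fact that a $k$-ordinary flat for $S$ contained in $H^*$ is $k$-ordinary for $S\cap H^*$ (immediate from the definition).

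What each approach buys: the paper's line argument is far shorter and immediately handles ordinary flats of \emph{different} dimensions as well, which is exactly what Theorem~\ref{thm:two_sets-intro} needs. Your approach costs more work but yields the stronger structural statement that all ordinary flats of dimension $<d$ lie in a single canonical hyperplane $H^*$; this is interesting in its own right and resonates with the reduction-to-lower-dimension technique used elsewhere in the paper (e.g., in Theorem~\ref{thm:local}).
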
 
\begin{proof}
	Assume on the contrary, that $J_1 \not\subset J_2$ and $J_2 \not\subset J_1$. Then $J_1 \cap J_2$ is a proper subflat of both $J_1$ and $J_2$. Since for $i=1,2$ $J_i$ is an ordinary flat of $S$, there exists $p_i \in S$ and a neighborhood $U_i \subset \Re^d$ of $p_i$ such that $U_i \cap S =U_i \cap J_i$. We can assume that $p_i \in J_i \setminus J_{i+1}$ (where the indices are taken modulo 2), because we can replace $p_i$ by some other $k$-ordinary point in $U_i \setminus J_{i+1}$. This is possible since $\mbox{dim} (J_i \cap J_{i+1}  ) < \mbox{dim} (J_i)$.
	
	Consider the line $\ell = \mbox{aff} (p_1,p_2)$  (see Figure \ref{fig:fig5}). Clearly, $\ell \cap J_i = \{p_i\}$, and since $U_i \cap S =U_i \cap J_i$, $\ell \cap U_i$ contains two points $q_i,q_i'$ very close to $p_i$ from each side of $p_i$ that are not in $J_i$ and therefore not in $S$. It follows that $q_i,q_i'\in K_1 \cup K_2$ and $\ell$ contains (w.l.o.g.) the points $q_2',p_2,q_2,q_1,p_1,q_1'$ in this order, where $p_1,p_2 \in S$ and $q_2',q_2,q_1,q_1' \in K_1 \cup K_2$, in contradiction to the convexity of $K_1$ and $K_2$.    
\end{proof}

%Claim \ref{cl:2convexsets} immediately imlies:

%\medskip

%\noindent \textbf{Theorem \ref{thm:K1K2}.}
%	Let $K_1,K_2 \subset \Re^d$ be convex sets, and let $S = \Re^d \setminus (K_1 \cup K_2)$. Then for every $0 \leq k \leq d$, there exist at most one $k$-ordinary flat. 

%\medskip

%On the other hand, we have complete control on the dimensions $k$ for which $k$-flats exist in the strong cover. Formally, we prove the following:

\begin{claim}\label{cl:K1K2}
	Let $f:\{0,\ldots,d\} \rightarrow \{0,1\}$. Then there exist convex sets $K_1,K_2 \subset \Re^d$ such that the number of $k$-ordinary flats in a strong cover of $S =\Re^d \setminus(K_1 \cup K_2)$ is $f(k)$. 
\end{claim}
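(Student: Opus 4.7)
The plan is to induct on $d$. For the base case $d=0$: if $T=\emptyset$, take $K_1=\{0\}$ and $K_2=\emptyset$, giving $S=\emptyset$; if $T=\{0\}$, take $K_1=K_2=\emptyset$, giving $S=\{0\}$. For the inductive step, identify $\Re^{d-1}$ with the hyperplane $H_0=\{x\in\Re^d : x_d=0\}$ and write $U_+=\{x_d>0\}$, $U_-=\{x_d<0\}$. Set $T'=T\cap\{0,1,\ldots,d-1\}$ and apply the inductive hypothesis to obtain convex $K_1',K_2'\subset\Re^{d-1}$ whose complement $S'=\Re^{d-1}\setminus(K_1'\cup K_2')$ has a strong cover realizing $T'$. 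I would first check that each $K_i'\cup U_+$ (and $K_i'\cup U_-$) is convex: a segment from a point of $K_i'\subset H_0$ to a point of $U_+$ has positive $x_d$-coordinate except at its $H_0$-endpoint, so it lies in $U_+\cup K_i'$.

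Case A ($d\notin T$). Define $K_1=K_1'\cup U_+$ and $K_2=K_2'\cup U_-$. A direct calculation gives $K_1\cup K_2=(K_1'\cup K_2')\cup\{x_d\neq 0\}$, so $S=S'\times\{0\}\subset H_0$. Since any flat $J\subset\Re^d$ whose intersection with a nonempty open set lies in the closed hyperplane $H_0$ must itself be contained in $H_0$, every ordinary flat of $S$ sits inside $H_0$, and the local analysis at each point of $S$ reduces verbatim to that of $S'$. This gives a dimension-preserving bijection $J\leftrightarrow J\times\{0\}$ between the strong covers of $S'$ and $S$, realizing $T=T'$.

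Case B ($d\in T$). Choose a large open ball $B=B_R(0)$ with $R$ so large that each flat in the (finite, by Theorem~\ref{thm:local}) strong cover of $S'$ contains at least one of its ordinary points inside $B\cap H_0$. Set $K_1=(K_1'\cup U_+)\cap B$ and $K_2=(K_2'\cup U_-)\cap B$, both convex as intersections of convex sets. Writing $A=K_1'\cup K_2'$, one computes $K_1\cup K_2=(A\cup\{x_d\neq 0\})\cap B$, hence $S=(\Re^d\setminus B)\cup(B\cap(S'\times\{0\}))$. Points in $\mathrm{int}(\Re^d\setminus B)$ are $d$-ordinary, supplying the flat $\Re^d$, while points of $S'\times\{0\}$ lying in $\mathrm{int}(B)$ have small neighborhoods contained in $B$ on which $S$ coincides with $S'\times\{0\}$; these contribute precisely the lifted flats $\{J\times\{0\}:J\text{ in the strong cover of }S'\}$.

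The hard part will be showing that no further ordinary flat appears in Case B, particularly through points of $\partial B$. I would argue by trichotomy on an ordinary point $p\in S$ with ordinary flat $\pi$ and witnessing neighborhood $U$. If $p\in\mathrm{int}(B)$, shrinking $U$ into $B$ forces $\pi\cap U\subset H_0$, hence $\pi\subset H_0$ and — by the Case A analysis applied to $S'$ — $\pi$ is already in the lifted strong cover of $S'$. If $p\in\mathrm{int}(\Re^d\setminus B)$, then $S\cap U=U$ and necessarily $\pi=\Re^d$. Finally, if $p\in\partial B$, then $S\cap U$ contains the full half-ball $U\setminus B$ glued only to a thin horizontal slice inside $B$, a shape that cannot agree with $\pi\cap U$ for any proper flat $\pi$; so $p$ is not ordinary at all and is merely covered by $\Re^d$. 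Together these cases yield exactly the strong cover $\{J\times\{0\}:J\text{ in the strong cover of }S'\}\cup\{\Re^d\}$, realizing $T=T'\cup\{d\}$.
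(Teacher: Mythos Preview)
Your proof is correct and follows essentially the same inductive approach as the paper: embed $\Re^{d-1}$ as the hyperplane $\{x_d=0\}$, extend $K_1',K_2'$ across the two open half-spaces, and split according to whether $d\in T$. Your Case~A is identical to the paper's case $f(d)=0$ (up to swapping the roles of the two half-spaces). In Case~B the paper uses a simpler gadget than your large ball: it keeps $K_1=K_1'\cup\{x_d<0\}$ and takes $K_2=K_2'\cup\{0<x_d\le 1\}$, so that $S=(S'\times\{0\})\cup\{x_d>1\}$. This sidesteps your $\partial B$ trichotomy entirely, since the ``boundary'' $\{x_d=1\}$ lies in $K_2$ rather than in $S$; your version works but does extra work to reach the same conclusion.
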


\begin{proof}
	We prove the claim by induction on $d$. For $d=1$, consider the following settings:
	\begin{enumerate}
		\item $K_1 \cup K_2 = \Re$, that corresponds to the function $f(0)=0, f(1)=0$.
		\item $K_1 \cup K_2 = \Re \setminus \{a\}$, that corresponds to the function $f(0)=1, f(1)=0$.
		\item $K_1 \cup K_2 = \Re \setminus [a,b]$, that corresponds to the function $f(0)=0, f(1)=1$.
		\item $K_1 \cup K_2 = \{a\} \cup (b,\infty)$ ($b>a$), that corresponds to the function $f(0)=1, f(1)=1$.
	\end{enumerate} 
	For $d>1$, let $g:\{0,\ldots,d-1\} \rightarrow \{0,1\}$ be the restriction of $f$ to $\{0,\ldots,d-1\}$. We abuse notation and identify $\Re^{d-1}$ with the points in $\Re^d$ whose last coordinate is 0. By the induction hypothesis, there exist  $K_1',K_2' \subset \Re^{d-1}$ such that a strong cover of $\Re^{d-1} \setminus(K_1' \cup K_2')$ consists of exactly $g(k)$ ordinary $k$-flats for each $0 \leq k \leq d-1$. 
	
	Let $K_1 = K_1' \cup \Re^d_-$ where $\Re^d_-=\{x=(x_1,\ldots,x_d) \in \Re^d:x_d<0\}$. If $f(d)=1$ then $K_2 = K_2'\cup (\Re^d_+ \cap  \{x=(x_1,\ldots,x_d) \in \Re^d:x_d \leq 1\})$, and if $f(d)=0$ then $K_2 = K_2' \cup \Re^d_+$.
	Indeed, in the first case all the points $(x_1,\ldots,x_d) \in S$ with $x_d > 1$ are $d$-ordinary points, and in the second case there are no $d$-ordinary points in $S$. For all other dimensions we are done by the induction hypothesis. 
\end{proof}

\begin{figure}[ht]
	\begin{center}
		\scalebox{0.7}{
			\includegraphics[width=0.8\textwidth]{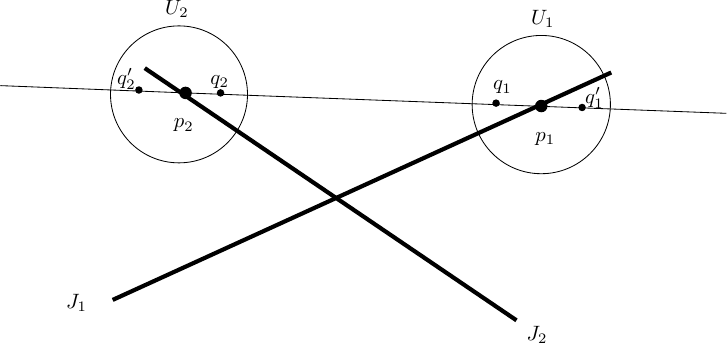}
		}
		\caption{An illustration for the proof of Claim \ref{cl:2convexsets}. }
		\label{fig:fig5}
	\end{center}
\end{figure}

\bibliographystyle{plain}
\bibliography{references}

\appendix

\section{Complements of Unions of Disjoint Convex Sets in $\Re^2$}\label{subsec:n-encapsuledR^2}

In this appendix we discuss the number of encapsulated points in the special case where the convex sets are pairwise disjoint. While in $\mathbb{R}^1$, and for three convex sets in $\mathbb{R}^2$, the maximal number of encapsulated points is obtained where the convex sets are disjoint (see Theorem~\ref{thm:3encapsuled}), in the general case the situation is starkly different. 

An easy example presented in~\cite{LawrenceM09} shows that the number of encapsulated points can be as large as $\Omega((\tfrac{n}{d})^d)$: Assuming for simplicity that $d|n$, one can take $\frac{n}{d}-1$ hyperplanes parallel to each of the $d$ axes and take $K_1,\ldots,K_n$ to be the strips between pairs of `consecutive' hyperplanes in the same direction. Clearly, $|S|=(\tfrac{n}{d}-1)^d$ and all its elements are points encapsulated by $K_1,\ldots,K_n$.   

We prove Proposition~\ref{thm:numFlatsR2} which determines the maximal number of encapsulated points in the plane assuming that $\{K_i\}$ are disjoint and $|S|<\infty$, and in particular, asserts that in this case $|S|$ is only linear in $n$ (compared to $\Omega(n^2)$ which can be obtained if disjointness is not assumed). In addition, in Proposition~\ref{Prop:example} we provide an example which shows that in $\Re^d$, as many as $\Omega(n^{\lfloor \frac{d+1}{2} \rfloor})$ encapsulated points can be obtained for disjoint convex sets $K_1,\ldots,K_n$.

%In this appendix we present the proof of Proposition~\ref{thm:numFlatsR2}.

\medskip \noindent \textbf{Proposition~\ref{thm:numFlatsR2} - restatement.}
Let $n \geq 3$ and let $K_1,\ldots,K_n \subset \Re^2$ be pairwise disjoint convex sets, such that $S=\Re^2 \setminus(\cup_{i=1}^n K_i)$ is a finite set of points. Then $|S| \leq 5n-11$. Furthermore, for any $n \geq 3$, this bound is attained for some sets $K_1,\ldots,K_n$.

\begin{remark}
We note that by Theorem~\ref{thm:local}, it is sufficient to assume that $S$ does not contain a segment, since this assumption implies that $S$ is a finite set of points. 
\end{remark}

\begin{proof}[Proof of Proposition~\ref{thm:numFlatsR2}]
	Assume w.l.o.g. that $\forall 1 \leq i \leq n$, $\mbox{dim}(K_i)=2$. This can indeed be assumed, as if some $K_i$ is a segment, a ray or a line, it touches only 2, 1 or 0 points in $S$, hence its contribution to the coefficient of $n$ in the upper bound is at most 2, while we aim at 5. Similarly, if some $K_i$ is a point, then by removing $K_i$ and adding this point to $S$, we just enlarge the ratio between $S$ and $n$. Moreover,  since the assertion of the theorem is trivial for the case of two 2-dimensional sets (in which all smaller-dimension sets lie on a line), we assume from now on that $\forall 1 \leq i \leq n$, $\mbox{dim}(K_i)=2$ and $n \geq 3$.
	
	For $1 \leq i < j \leq n$, let $\ell_{ij} $ be a line that separates $K_{i} $ from $K_{j} $. Let $H_{ij} $ be the open half-plane supported by $\ell_{ij} $ that includes $K_j$. Consider the planar drawing $G$ whose edges are $$\{  \mbox{cl} (K_i) \cap \mbox{cl} (K_j) : 1 \leq i<j \leq n, \dim( \mbox{cl} (K_i) \cap \mbox{cl} (K_j))=1  \},$$ whose faces are $\{Q_i = \bigcap_{j \neq i} H_{ij}\}_{i=1}^n$, and whose vertices are points that belong to the closure of at least 3 $K_i$'s. (See Figure \ref{fig:fig13}.) Note that $\forall 1 \leq i \leq n, \mbox{cl} (Q_i) =  \mbox{cl} (K_i)$.
	
	Denote the number of vertices in $G$ by $v$, the number of edges by $e$, and the number of faces by $f$. By Euler's formula\footnote{The standard formulation of Euler's formula, $v-e+f=2$, refers to a finite planar graph, where the outer region is also considered a face. We can pass between the standard formulation and the formulation discussed in our case, by enclosing all vertices and edges within a large circle, and ignoring the rays extending outside the circle. This transition adds one outer face and adds the same number of edges and vertices.}, $v-e+f=1$.
	\begin{figure}[ht]
		\begin{center}
			\scalebox{0.5}{
				\includegraphics[width=0.8\textwidth]{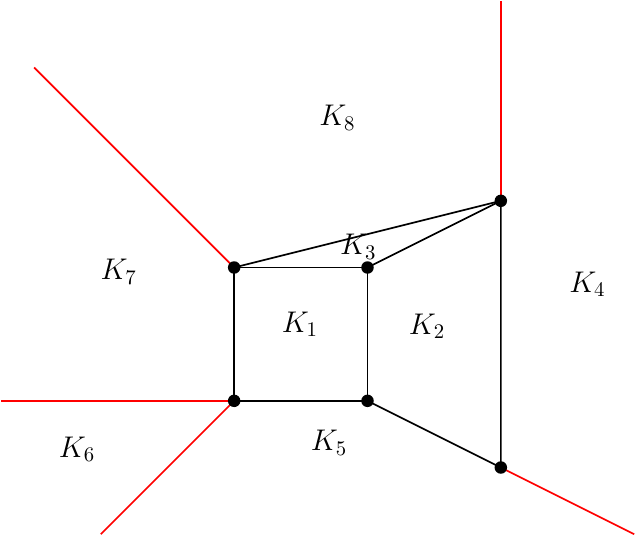}
			}
			\caption{An illustration for the proof of Proposition \ref{thm:numFlatsR2} that presents the planar graph defined by the 8 convex sets $K_1, \ldots, K_8$. Five edges of this graph are rays and are colored red, and 8 edges are segments and are colored black. The 6 vertices are drawn as small discs.}
			\label{fig:fig13}
		\end{center}
	\end{figure}
	Let $X$ be the number of incidences between vertices and edges in $G$. Since each vertex is incident to at least 3 edges, 
	\begin{equation}
		3v \leq X. 
		\label{eq:eq3}
	\end{equation}
	On the other hand, let $e_r$ be the number of rays among the edges of $G$, and let $e_l$ be the number of lines among the edges of $G$. Since each segment is incident with at most 2 vertices, each ray is incident with at most one vertex, and each line is incident with no vertex,  we have 
	\begin{equation}
		X \leq 2e-e_r-2e_l.
		\label{eq:eq1}
	\end{equation}
	\begin{claim}\label{cl:eq}
		\begin{equation}
			e_r+2e_l \geq 3.
			\label{eq:eq2}
		\end{equation}
	\end{claim}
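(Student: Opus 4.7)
The plan is to prove Claim~\ref{cl:eq} by analyzing how the edges of $G$ and the sets $K_i$ extend to infinity, via a sufficiently large circle $C_R$ centered at the origin.

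First, I would use the assumption that $S$ is finite (hence bounded) to ensure that for $R$ sufficiently large, the circle $C_R$ is contained in $\bigcup_i K_i$. For generic such $R$, $C_R$ moreover avoids all vertices of $G$, all bounded edges of $G$, and the finitely many $0$-dimensional touching points $\mbox{cl}(K_i) \cap \mbox{cl}(K_j)$, so that transitions between different $K_i$'s along $C_R$ occur exactly at intersections of $C_R$ with edges of $G$. Since for large $R$ every bounded edge lies inside $C_R$, each ray contributes one intersection and each line contributes two, giving exactly $e_r + 2e_l$ transition points on $C_R$, at each of which we switch from one $K_i$ to another.

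The heart of the proof is to rule out each of $e_r + 2e_l \in \{0, 1, 2\}$. If $e_r + 2e_l = 0$, then $C_R$ lies entirely in a single $K_i$ for all large $R$; hence $K_i \supset \Re^2 \setminus \bar{B}_{R_0}$ for some $R_0$, and a short convexity argument (each point lies on a long horizontal chord with endpoints outside $\bar{B}_{R_0}$) forces $K_i = \Re^2$, contradicting $n \geq 3$ and disjointness. If $e_r + 2e_l = 1$, the two local sides of the unique transition point would have to lie in different $K_i$'s, yet they are joined by the rest of $C_R$, which crosses no further transition and thus lies in a single $K_i$ --- impossible. If $e_r + 2e_l = 2$, going once around $C_R$ forces both transitions to separate the same pair $K_a, K_b$, so both lie in $\mbox{cl}(K_a) \cap \mbox{cl}(K_b)$. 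This set is convex and hence connected, and being $1$-dimensional it must be a segment, ray, or line; since it meets $C_R$ in two distinct points at infinity it must in fact be a full line, forcing $e_l = 1$ and $e_r = 0$. A second convexity argument, analogous to the one above but applied inside each of the two open half-planes bounded by this line, then shows that $K_a$ and $K_b$ coincide with these two half-planes, leaving no room for a $2$-dimensional third $K_c$ and again contradicting $n \geq 3$.

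The main technical subtlety I expect to face is the ``generic $R$'' step --- making precise that for all but finitely many values of $R$, transitions on $C_R$ occur only at edges of $G$ and not at isolated touching points. This reduces to observing that the convexity of $\mbox{cl}(K_i) \cap \mbox{cl}(K_j)$ forces any $0$-dimensional instance of it to be a single point, so that there are at most $\binom{n}{2}$ ``bad'' radii $R$, all of which can be avoided.
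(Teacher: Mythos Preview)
Your proof is correct and takes essentially the same approach as the paper: both pick a large circle and rule out $e_r+2e_l\le 2$ by convexity arguments about how the unbounded edges of $G$ meet it. Your transition-counting on $C_R$ is a somewhat cleaner packaging --- the parity observation disposes of $e_r+2e_l=1$ (a case the paper does not explicitly address), and your deduction that both transition points must lie on the single edge $\mbox{cl}(K_a)\cap\mbox{cl}(K_b)$ lets you bypass the paper's separate treatment of ``two parallel rays'' versus ``two rays in different directions.''
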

	\begin{proof}[Proof of Claim \ref{cl:eq}]
		Recall the assumption that $\forall 1 \leq i \leq n$, $\mbox{dim}(K_i)=2$ and $n \geq 3$.
		Consider a large disc that contains all the vertices within it. Every edge that extends outside the disc is unbounded.  There must be at least one such edge, since the region outside the circle is not convex. For a similar reason, it cannot be that only one line extends outside the circle, or only two parallel rays, or only two rays in different directions. Therefore, at least 3 rays, or alternatively, at least two lines, extend outside the circle. In any case, it holds that $	e_r+2e_l \geq 3$, as asserted.
	\end{proof}
	
	\noindent Combining together (\ref{eq:eq3}), (\ref{eq:eq1}) and (\ref{eq:eq2}), we have
	\begin{equation}
		3v \leq 2e-3.
		\label{eq:eq4}
	\end{equation}
	Now, combining~\eqref{eq:eq4} together with the equality $v-e+f=1$, we get
	$$\begin{cases}
		e \leq 3f-6  & \\
		v \leq 2f-5, & 
	\end{cases}$$
	and in total, $$v+e \leq 5f-11.$$
	The assertion follows immediately, since the points of $S$ can be only the vertices of $G$ together with at most one point on each edge of $G$.
	
	This proof method inspires the tightness construction illustrated in Figure \ref{fig:fig14}.
\end{proof}
\begin{figure}[htbp]
	\centering
	\begin{subfigure}[b]{0.55\textwidth}
		\centering
		\includegraphics[width=\textwidth]{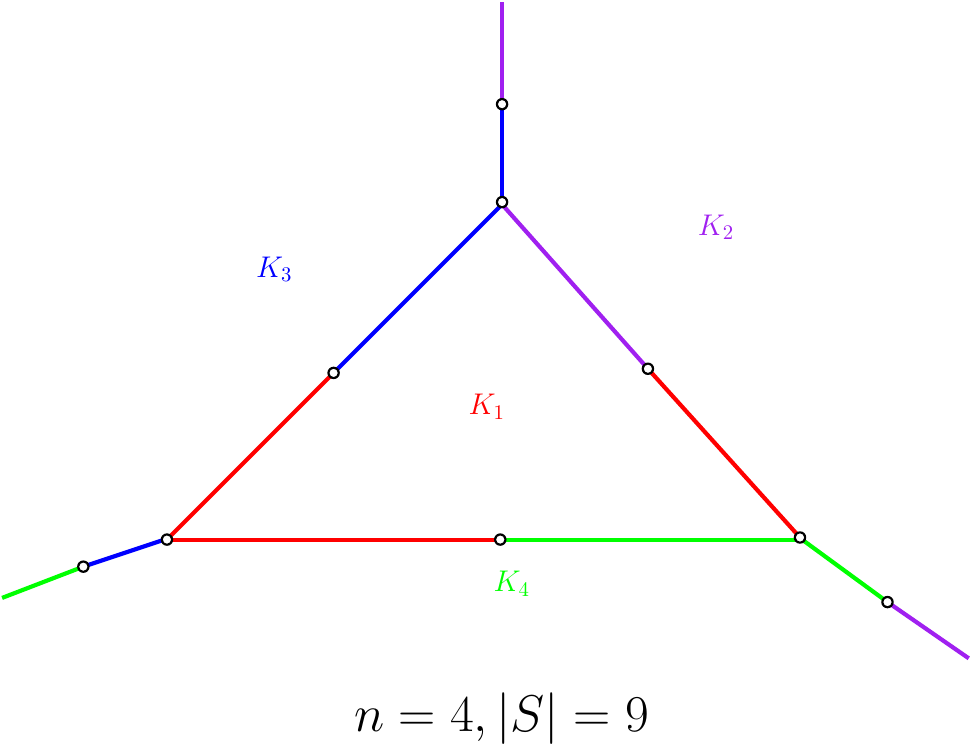}
		\caption{}
		\label{fig:fig14a}
	\end{subfigure}
	\hfill
	\begin{subfigure}[b]{0.65\textwidth}
		\centering
		\includegraphics[width=\textwidth]{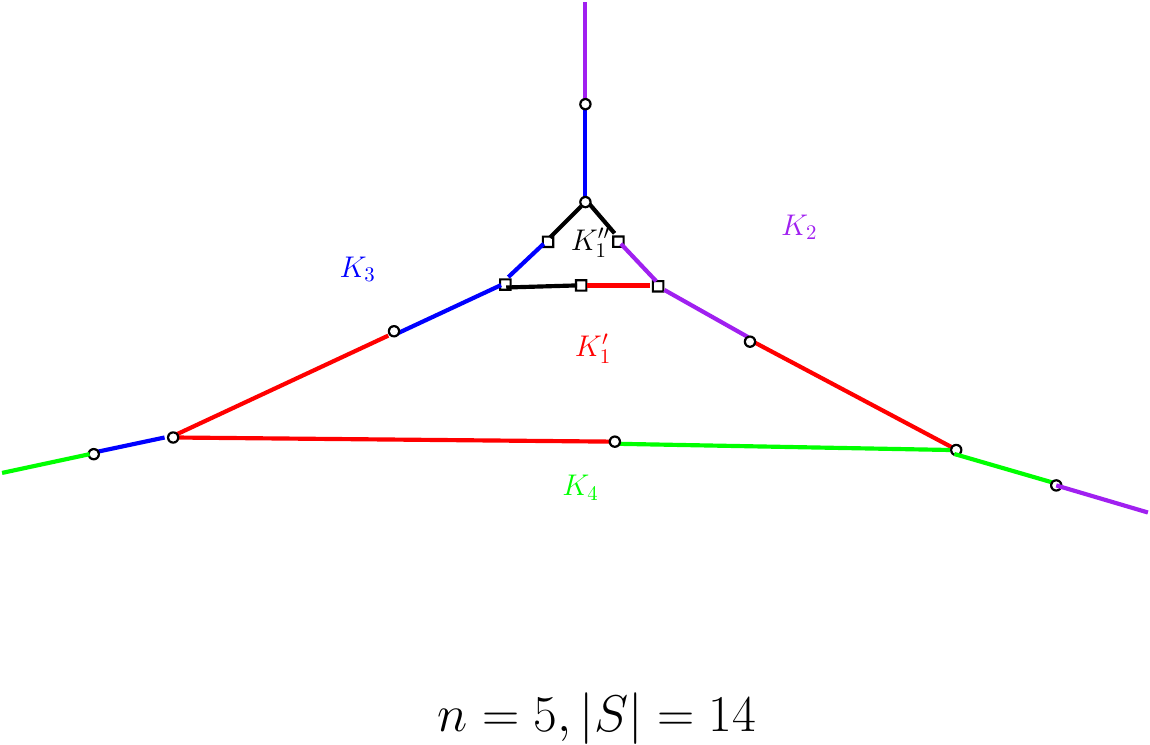}
		\caption{}
		\label{fig:fig14b}
	\end{subfigure}
	\caption{The tightness construction for Proposition \ref{thm:numFlatsR2}. In Fig.~\ref{fig:fig14a}, the construction is presented for $n=4$. Each convex set corresponds to a different color, where an edge is colored with the color of the convex set to which it belongs. The 9 points of $S$ are marked with hollow circles. In Fig.~\ref{fig:fig14b}, the construction is presented for $n=5$, where the set $K_1$ was converted to the sets $K_1'' \subset K_1$, and $K_1'$ which was obtained from the lower part of $K_1$ with a slight modification in the slopes of the non-horizontal edges as can be seen in the figure. In this transition from $n=4$ to $n=5$, five points were added to $S$, which are drawn as squares in the figure.
		Similarly, one can proceed to $n=6,7,\ldots$ where at each step one convex set is added, and five points are added to $S$.} 
	\label{fig:fig14}
\end{figure}

\begin{proposition}\label{Prop:example}
	For any $d,n \in \mathbb{N}$ such that $n>d+1 \geq 4$, there exist pairwise disjoint convex sets $K_1,\ldots,K_n \subset \Re^d$ that encapsulate at least $\Omega(n^{\lfloor \frac{d+1}{2} \rfloor})$ points. 
\end{proposition}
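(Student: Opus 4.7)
The plan is to prove the proposition by an inductive construction on the dimension $d$. The key step is a lifting from $\Re^{d-2}$ to $\Re^d$ that, given $n$ pairwise disjoint convex sets encapsulating $N$ points in $\Re^{d-2}$, produces $n$ pairwise disjoint convex sets encapsulating $\Omega(nN)$ points in $\Re^d$. Iterating this starting from the base cases $d=1$ (where $n$ disjoint intervals on a line give $n-1$ encapsulated points) and $d=2$ (where Proposition~\ref{thm:numFlatsR2} gives $\Omega(n)$ encapsulated points with pairwise disjoint sets) yields $\Omega(n^{\lfloor (d+1)/2 \rfloor})$ encapsulated points in $\Re^d$ as required.

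For the lifting $\Re^{d-2} \to \Re^d = \Re^{d-2} \times \Re^2$, let $K_1, \ldots, K_n \subset \Re^{d-2}$ be the given disjoint sets encapsulating points $p_1, \ldots, p_N$, and let $L_1, \ldots, L_n \subset \Re^2$ be the planar construction from Proposition~\ref{thm:numFlatsR2} with $M = \Theta(n)$ encapsulated points $q_1, \ldots, q_M$. I would first try the naive product $M_i = K_i \times L_i$, which is pairwise disjoint since $K_i \cap K_j = \emptyset$ for $i \neq j$ already implies $M_i \cap M_j \subseteq (K_i \cap K_j) \times \Re^2 = \emptyset$. However, this product does not encapsulate the $NM$ candidate points $(p_\ell, q_k)$: for $x$ close to $p_\ell$ with $x \neq p_\ell$, the point $(x, q_k)$ lies in the complement, since $(x, q_k) \in M_i$ would require $q_k \in L_i$, which holds for no index $i$. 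To remedy this, each $M_i$ must be replaced by an enlargement $\tilde M_i \supsetneq M_i$ that additionally covers appropriate portions of the ``cross-slices'' $\{p_\ell\} \times \Re^2$ and $\Re^{d-2} \times \{q_k\}$ near each candidate point, while the $\tilde M_i$'s remain pairwise disjoint.

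The main obstacle is engineering the enlargement $\tilde M_i$ to simultaneously achieve disjointness and encapsulation at all $NM$ candidate points. Encapsulation of $(p_\ell, q_k)$ amounts to a matching condition on tangent cones: for every unit direction $(u, v) \in \Re^{d-2} \times \Re^2$, some index $i$ must have both $u$ in the tangent cone of the $\Re^{d-2}$-projection of $\tilde M_i$ at $p_\ell$ and $v$ in the tangent cone of the $\Re^2$-projection at $q_k$. This matching is not automatic between arbitrary constructions, and overcoming it requires carefully coordinating the angular structures of the two families. I would exploit the flexibility in the construction of Proposition~\ref{thm:numFlatsR2}, which allows many $L_i$'s to touch each $q_k$ with tangent cones densely filling all angular directions in $\Re^2$, providing enough freedom to match any required direction coming from the $K_i$'s at $p_\ell$. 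Verifying that the resulting enlarged family is pairwise disjoint \emph{and} locally covers a punctured neighborhood of each $(p_\ell, q_k)$ is the most delicate technical part; once established, the recursion $\Omega(n \cdot N)$ follows at each inductive step, yielding the claimed $\Omega(n^{\lfloor (d+1)/2 \rfloor})$ lower bound.
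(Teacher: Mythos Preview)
Your route is genuinely different from the paper's, but the lifting step that carries the whole argument is not established, and the sketch you offer for it does not work.

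The paper's proof is a one-shot construction with no induction on $d$: take the dual $P$ of a neighborly $(d+1)$-polytope with $n$ vertices, so that $P$ has $n$ facets and $\Theta(n^{\lfloor (d+1)/2 \rfloor})$ vertices; centrally project $P$ from a point just below its highest vertex onto a horizontal hyperplane $\pi\cong\Re^d$, obtaining a tiling of $\Re^d$ by $n$ closed polyhedra $\hat K_1,\ldots,\hat K_n$; disjointify by $K_i=\hat K_i\setminus\bigcup_{j<i}\hat K_j$, which remains convex since each $\hat K_i\cap\hat K_j$ is a face of $\hat K_i$; finally remove from the $K_i$'s the images of the vertices of $P$ (these are extreme points), producing the encapsulated set $S$ of the required size.

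In your plan, everything rests on the multiplicative lifting $N\mapsto\Omega(nN)$ from $\Re^{d-2}$ to $\Re^d$ using the \emph{same} index set $[n]$ for both factors. You correctly diagnose that the product $M_i=K_i\times L_i$ fails, but your proposed remedy relies on the assertion that in the tightness construction of Proposition~\ref{thm:numFlatsR2} ``many $L_i$'s touch each $q_k$ with tangent cones densely filling all angular directions''. That is false: each encapsulated point there lies either on an edge of the auxiliary planar graph (touched by exactly two $L_i$'s) or at a vertex of small degree, so only two or three of the $L_i$'s touch any given $q_k$. More seriously, the matching condition you state must hold simultaneously at all $NM$ pairs $(p_\ell,q_k)$, while the touching index sets $I(p_\ell)\subset[n]$ and $J(q_k)\subset[n]$ vary independently with $\ell$ and $k$; you give no mechanism for producing convex, pairwise disjoint enlargements $\tilde M_i$ whose tangent cones are compatible at every such pair. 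The only lifting of this flavor actually carried out in the paper (Theorem~\ref{thm:3encapsuled}(c), for $n=3$) is \emph{additive}, gaining three points per two dimensions; a multiplicative gain with the same $n$ sets is a much stronger claim, and you have not shown it.
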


\begin{proof}
	%As for the maximal number of points that can be encapsulated by $n$ pairwise disjoint convex sets in $\Re^d$, the following construction implies a lower bound of $\Omega(n^{\lfloor \frac{d+1}{2} \rfloor})$. 
	Let $m=n^{\lfloor \frac{d+1}{2} \rfloor}$. As in the proof of Theorem \ref{thm:num_flats}, where $n>d+1 \geq 4$, there exists an $m$-neighborly convex $(d+1)$-polytope with $n$ vertices and $\Theta(n^{m}) $ facets. The dual polytope $P$ has $n$ facets and $\Theta(n^{m}) $ vertices. Assume $P$ has a unique ``highest'' vertex $x$ (by ``height'' we mean the $(d+1)$-st coordinate). Choose a point $x' \in \mbox{int} P$ that is higher than all vertices of $P$ except $x$, and apply a central projection from $x'$ to some lower horizontal hyperplane $\pi$ ($\pi=\{z:z_{d+1}=constant\}$).  
		
		% cyclic polytope $\Re^{d+1}$ with $n$ vertices and $\Theta(n^{\lfloor \frac{d+1}{2} \rfloor}) $ facets. Its dual polytope, $P$, has $n$ facets and $\Theta(n^{\lfloor \frac{d+1}{2} \rfloor}) $ vertices.
	
%	Assume that $x$ is the (unique) highest vertex of $P$, and apply a central projection from $x' \in \mbox{int}P$ which is very close to $x$ (above all other vertices of $P$), to some $d$-dimensional horizontal hyperplane $\pi$ that lies strictly below $P$. 
	
	The image of each facet that does not contain $x$ is a $d$-dimensional polytope in $\pi$, and the facets that contain $x$ are projected to unbounded $d$-dimensional polyhedral sets in $\pi$. These images of the facets of $P$ in $\pi$ are $n$ convex sets $\hat{K_1}, \ldots,\hat{K_n}$ with $\hat{K_1}\cup \ldots \cup \hat{K_n}=\pi$.
	
	For each face $F$ in the projection, let $\nu(F)=\min \{i:F \subset \hat{K_i}\}$ (in particular, $\nu(\hat{K_i})=i $) and let $K_i=\bigcup \{F: \nu(F) = i\}$. Each $K_i$ is a convex set, since 
	$$K_i=\hat{K_i} \setminus \bigcup_{j<i}\hat{K_j} = \bigcap_{j=1}^{i-1} (\hat{K_i} \setminus \hat{K_j})=\bigcap_{j=1}^{i-1} (\hat{K_i} \setminus (\hat{K_i} \cap \hat{K_j})),$$ and $\hat{K_i} \cap \hat{K_j}$ ($j<i$) is a face of $\hat{K_i}$ whose removal maintains convexity.
	
	Clearly, $\bigcup_{i=1}^n K_i= \bigcup_{i=1}^n \hat{K_i}=\pi$ .The projections of the vertices of $P$ are extremal points of the $K_i$'s. Therefore we can remove them from the $K_i$'s, to form a complementary set $S$ with $|S|=\Theta(n^{m}) $. 
\end{proof}

\end{document}